\documentclass[11pt]{article}
\usepackage[T1]{fontenc}
\usepackage{mathrsfs}
\usepackage{amsmath}
\usepackage{bbm,color}
\usepackage{amsfonts}

\makeatletter

\@addtoreset{equation}{section}
\makeatother

\usepackage{amssymb}
\usepackage{amsthm}
\usepackage[utf8]{inputenc}
\usepackage[hidelinks,bookmarksnumbered=true]{hyperref}

\newtheorem{theorem}{Theorem}[section]
\newtheorem{proposition}[theorem]{Proposition}
\newtheorem{lemma}[theorem]{Lemma}
\newtheorem{corollary}[theorem]{Corollary}
\theoremstyle{definition}

\newtheorem{remark}[theorem]{Remark}
\newtheorem{example}[theorem]{Example}

\newcommand{\C}{\mathbb C}
\newcommand{\R}{\mathbb R}
\newcommand{\Q}{\mathbb Q}
\newcommand{\Z}{\mathbb Z}

\newcommand{\D}{\mathbb D}
\newcommand{\T}{\mathbb T}
\newcommand{\Chat}{\widehat{\C}}
\newcommand{\id}{\operatorname{id}}
\newcommand{\Cent}{\operatorname{Cent}_{\mathrm{an}}}
\newcommand{\Centfor}{\operatorname{Cent}_{\mathrm{for}}}
\newcommand{\Diff}{\operatorname{Diff}(\C,0)}
\newcommand{\Diffhat}{\widehat{\operatorname{Diff}}(\C,0)}
\newcommand{\Bcal}{\mathcal B}
\newcommand{\ord}{\operatorname{ord}}
\newcommand{\im}{\operatorname{im}}

\newcommand{\iter}[2]{#1^{\circ #2}}
\newcommand{\cyc}[1]{\langle #1\rangle}
\newcommand{\disc}[1]{\D_{#1}}

\allowdisplaybreaks[2]
\title{Analytic centralisers of quadratic germs\\and the Brjuno dichotomy}

\author{{\sc Wen Si}$$\thanks{E-mail: {\tt siwenmath@sdu.edu.cn}}
\\
	 {\small School of Mathematics, Shandong University,}\\
	{\small Jinan, Shandong 250100, People's Republic of China}
}

\date{}

\begin{document}
\maketitle

\begin{abstract}
For $Q_\alpha(z)=e^{2\pi i\alpha}z+z^2$ with $\alpha$ irrational, we prove
that the analytic centraliser of $Q_\alpha$ is trivial whenever $Q_\alpha$
is not analytically linearizable. Together with the quadratic
Brjuno--Yoccoz theorem this gives a complete dichotomy, for the centraliser
is infinite cyclic at the non-Brjuno parameters and conjugate to the group
of nonzero complex dilations at the Brjuno parameters. The result extends
the dense-parameter conclusion of Avila--Cheraghi--Eliad to every
nonlinearizable member of the quadratic family. The proof starts from
Inou's graph-globalization construction, where we retain the original
neutral fixed point on a Riemann surface and verify the branch topology and
finite-valence estimate, including multiplicities. Nonlinearizability,
through the equality case of Schwarz's lemma, forces the surface to be the
plane, and a direct quadratic growth estimate then makes the two entire
projections polynomial, so that the established polynomial commutant
classification yields the integral-iterate conclusion. We also determine
the convergence set of the formal commutants and classify the analytic
iterative roots of every nonzero iterate.
\end{abstract}

\begin{quote}
\small
 \textbf{2020 Mathematics Subject Classification. }
37F50; 37F10, 30F10.

 \textbf{Keywords and phrases. }
Analytic centraliser, quadratic polynomial, Cremer fixed point, Brjuno condition,
 Riemann surface.
\end{quote}

\vskip 2mm
\tableofcontents

\newpage
\section{Introduction and main results}\label{sec:introduction}

\subsection{Local analytic symmetries}
For an invertible holomorphic germ
\begin{equation*}
 f(z)=\lambda z+O(z^2),\qquad \lambda\ne0,
\end{equation*}
its local analytic centraliser is
\begin{equation}\label{eq:centraliser}
 \Cent(f)=\{h\in\Diff:h\circ f=f\circ h\}.
\end{equation}
All identities in \eqref{eq:centraliser} are identities of germs at $0$.
Here $\Diff$ denotes the group of invertible holomorphic germs at $0$,
namely those of the form $h(z)=a_1z+\sum_{n\ge2}a_nz^n$ with
$a_1=h'(0)\ne0$.
The inverse function theorem determines a unique inverse germ fixing $0$.
Write $\iter f k$ for the $k$-th iterate of $f$, with $\iter f 0=\id$,
$\iter f{k+1}=f\circ\iter f k$ for $k\ge0$, and
$\iter f k=(\iter f{-k})^{-1}$ for $k<0$. Consequently
\begin{equation}\label{eq:integer-group}
 \cyc f=\{\iter f k:k\in\Z\}\subset\Cent(f),
 \qquad \iter f k\circ\iter f\ell=\iter f{k+\ell}.
\end{equation}
Following Avila--Cheraghi--Eliad~\cite{ACE}, the centraliser is called
\emph{trivial} when equality holds in \eqref{eq:integer-group}. Thus
``trivial'' means that no symmetries occur beyond the integral iterates,
not that the identity is the only symmetry.

The global problem of deciding which rational maps commute is classical and
goes back to Ritt~\cite{Ritt}, while the polynomial factorization used
below is the precise form proved by Pakovich~\cite{Pakovich}.
Those results describe maps on the whole sphere, whereas the objects in
\eqref{eq:centraliser} are germs, and the difference is essential.
Such a germ need not extend to a single-valued function on $\C$, and a
negative iterate generally has no such extension, so the global theory
cannot be transferred literally. Even so, once $f$ is analytically
linearizable, every nonzero dilation becomes a local symmetry and the
centraliser is large. Write $\C^*=\C\setminus\{0\}$ for the nonzero
complex numbers. More precisely, if
\begin{equation*}
 \psi\circ f\circ\psi^{-1}=R_\lambda,
 \qquad R_\lambda(w)=\lambda w,
\end{equation*}
then
\begin{equation*}
 \psi^{-1}\circ R_\mu\circ\psi\in\Cent(f)
 \qquad(\mu\in\C^*).
\end{equation*}
One might hope that nonlinearizability forces the centraliser to shrink,
but this fails in general, for P\'erez Marco constructed nonlinearizable
irrationally indifferent germs with uncountably many analytic
symmetries~\cite{PM}. Thus neither the modulus of the multiplier nor
nonlinearizability by itself determines the centraliser in the unrestricted
class of holomorphic germs.

We consider the quadratic family
\begin{equation}\label{eq:quadratic-family}
 f_\lambda(z)=\lambda z+z^2=Q_\alpha(z),
 \qquad \lambda=e^{2\pi i\alpha},\quad \alpha\in\R\setminus\Q.
\end{equation}
Avila--Cheraghi--Eliad proved triviality for every rational parameter and
for a dense set of irrational parameters~\cite[Theorems~1.1--1.2]{ACE}.
Their irrational examples are obtained by successive perturbations of the
parabolic parameters. The question addressed here is the all-parameter
strengthening for the nonlinearizable members of \eqref{eq:quadratic-family}. We obtain the following theorem.
\begin{theorem}[Quadratic centraliser rigidity]\label{thm:main}
Let $\alpha\in\R\setminus\Q$ with $\lambda=e^{2\pi i\alpha}$  and
$f(z)=\lambda z+z^2$. If $f$ is not analytically linearizable at $0$, then
\begin{equation*}
 \Cent(f)=\cyc f=\{\iter f k:k\in\Z\}.
\end{equation*}
Moreover, for each $h\in\Cent(f)$, the integer $k$ in $h=\iter f k$ is unique.
\end{theorem}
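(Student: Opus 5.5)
The plan follows the route sketched in the abstract. Fix $h\in\Cent(f)$ with $h'(0)=\mu$. We first build a Riemann surface on which $h$ becomes global, then show this surface is $\C$, then show the resulting maps are polynomials, and finally use the polynomial commutant classification.

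\textbf{Globalization.} Following Inou, we globalize the graph of $h$. Consider the germ of the graph $\Gamma_h=\{(z,h(z))\}$ near $(0,0)$ and saturate it under the dynamics $(z,w)\mapsto(f(z),f(w))$. Concretely, we take the analytic continuation of the relation $f\circ h=h\circ f$ along preimage branches, so that $\Gamma_h$ extends to a connected Riemann surface $X$ with the following structure.
\begin{itemize}
\item $X$ carries two holomorphic projections $\pi_1,\pi_2\colon X\to\C$ and a holomorphic self-map $F\colon X\to X$ with $\pi_j\circ F=f\circ\pi_j$.
\item $X$ contains a marked point $x_0$ over $(0,0)$, fixed by $F$, with $\pi_1$ a local chart at $x_0$.
\end{itemize}
The key verifications are the following.
\begin{itemize}
\item $F$ is proper of degree $2$.
\item Branching of $F$ occurs only over the critical point $-\lambda/2$.
\item Each $\pi_j$ has finite valence, counted with multiplicities, on fibres.
\end{itemize}
The finite-valence bound is what we expect to need later for growth control. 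The neutral fixed point $x_0$ must be retained on $X$ with multiplier $\lambda$ for $F$.

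\textbf{Uniformization step.} Let $\tilde X$ be the universal cover of $X$, which is $\D$, $\C$ or $\Chat$. The sphere is excluded by the non-compactness of $X$, since $\pi_1$ is nonconstant with values in $\C$. Suppose $\tilde X=\D$. Then $F$ lifts to a self-map of $\D$ fixing a lift of $x_0$ with derivative $\lambda$, $|\lambda|=1$. The equality case of Schwarz's lemma forces the lift to be a rotation, so $F$ is an automorphism near $x_0$ in the hyperbolic metric. Transporting this through the local chart $\pi_1$ near $x_0$ would linearize $f$ at $0$, contradicting the hypothesis. Hence $X$ is parabolic, and the finite-valence estimate together with the covering structure should force $X\cong\C$ itself. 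Under this identification $F$ is an entire self-map with a neutral fixed point. Since $F$ is proper of degree $2$, it is a quadratic polynomial conjugate by an affine map, and we normalize $F=f$ via $\pi_1$ near $0$.

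\textbf{Growth and polynomiality.} With $X=\C$, the projections $\pi_1,\pi_2$ are entire and satisfy $\pi_j\circ F=f\circ\pi_j$ with $F$ quadratic. Compare Green functions: from $G_f(\pi_j(x))=G_F(x)$ together with $G(z)=\log|z|+O(1)$ for large $|z|$, we get $\log|\pi_j(x)|\le\log|x|+O(1)$. Hence each $\pi_j$ has linear growth and is therefore a polynomial, in fact affine. The same bound applied to the relation gives polynomial growth in any case.

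\textbf{Conclusion.} It follows that $h=\pi_2\circ\pi_1^{-1}$ near $0$ is the restriction of a polynomial relation between commuting data. We then invoke the Ritt--Pakovich classification of polynomials commuting with (or semiconjugate to) a quadratic $f$. It allows only iterates of $f$, possibly composed with the symmetry $z\mapsto -\lambda - z$ of $f$ about its critical point. This symmetry does not fix $0$ and is therefore excluded as a germ. Thus $h=\iter fk$, where negative $k$ arise when $\pi_1$ rather than $\pi_2$ carries the iterate.

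\textbf{Uniqueness of $k$.} Since $(\iter fk)'(0)=\lambda^k$ and $\alpha$ is irrational, $\lambda^k=\lambda^\ell$ forces $k=\ell$.

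The main obstacle is the globalization step. It requires verifying that $X$ is a genuine Riemann surface with the stated branching and valence bounds, since continuation along inverse branches of $f$ could create multiplicities that make the relation non-Hausdorff or infinitely sheeted.
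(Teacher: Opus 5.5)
Your growth step fails as written, and it is the step where the real work lies. From $\pi_j\circ F=f\circ\pi_j$ you only get that $u=G_f\circ\pi_j$ satisfies $u\circ F=2u$, the same functional equation as $G_F$. This does not force $u=G_F$: when $\pi_j$ turns out to be a polynomial of degree $2^n$, one has $u=2^nG_F$. Moreover, a priori $\pi_j$ may be transcendental, so no comparison with $\log|x|$ is available. The conclusion that the projections are affine is false. For $h=f$ the globalized curve is $\{(z,f(z)):z\in\C\}$, and $\pi_2$ has degree $2$. Indeed, if both projections were affine, $h=\pi_2\circ\pi_1^{-1}$ would be an affine map commuting with $f$, hence the identity, so your argument would exclude $h=f$ from $\Cent(f)$. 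The sentence about ``polynomial growth in any case'' is exactly what needs proof.

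The missing idea is to exploit $\deg F\ge2$, as the paper does. By Rouch\'e, $\overline{\disc{cr^2}}\subset F(\disc r)$ for large $r$, so $M(r)=\max_{|x|\le r}|\pi_j(x)|$ satisfies $M(cr^2)\le M(r)^2+M(r)$. Iterating along $r_{n+1}=cr_n^2$ gives $M(r)=O(r^K)$, and Cauchy's estimates make $\pi_j$ a polynomial. Comparing degrees in $\pi_j\circ F=\pi_j^2+\lambda\pi_j$ then gives $\deg F=2$. The same iteration applied to $\max_{\disc r}G_f\circ\pi_j$, which at most doubles when $r$ is replaced by $cr^2$, would repair your Green-function version.

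Several other claims are asserted rather than proved.

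Properness of $F$ with degree $2$, and finite valence of the $\pi_j$, hold only a posteriori. What the branch surface gives directly is $\sum_{F(x)=y}\deg_xF\le4$, inherited from the at-most-four-point fibres of $(f,f)$ on $\C^2$, with no control on the projections. So on $X\simeq\C$ you should argue in three steps. First, finite valence and Picard's theorem make $F$ a polynomial of degree at most $4$. Second, nonlinearizability excludes degree $1$: an affine $F$ with multiplier $\lambda$ at $x_0$ would linearize $f$ through the local chart. Third, degree $2$ comes only after the growth step.

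Your Schwarz-lemma exclusion of the disc is the paper's argument. However, ``should force $X\cong\C$'' must be replaced by the actual case analysis. A torus is excluded because $\pi_1$ is a nonconstant holomorphic function to $\C$. The surface $\C^*$ is excluded because a finite-valence self-map of $\C^*$ is $az^m$, whose fixed points all have the integer multiplier $m\ne\lambda$.

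Finally, applying Pakovich's theorem requires checking that $f$ is non-special; special quadratics have fixed-point multipliers in $\{0,2,4,-2\}$. You also need that the identity is the only affine commutant of $f$. With these, the affinely normalized projections are $\iter f{n_1}$ and $\iter f{n_2}$, and $h=\iter f{n_2-n_1}$. The reflection $\sigma(z)=-\lambda-z$ satisfies $f\circ\sigma=f$ but does not commute with $f$, so it never enters.
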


Let $p_n/q_n$ be the ordinary continued-fraction convergents of $\alpha$.
We use the convention $q_{-1}=0$, $q_0=1$ and define the Brjuno set as follows
\begin{equation*}
 \Bcal=\left\{\alpha\in\R\setminus\Q:
   \sum_{n\ge0}\frac{\log q_{n+1}}{q_n}<\infty\right\}.
\end{equation*}
The Brjuno--Yoccoz theorem for the quadratic family states
\begin{equation}\label{eq:BY}
 \alpha\in\Bcal
 \quad\Longleftrightarrow\quad
 Q_\alpha\text{ is analytically linearizable at }0.
\end{equation}
We use \eqref{eq:BY} as an established arithmetic theorem~\cite{Yoccoz}.
See also the quantitative treatment of quadratic linearization in
Buff--Ch\'eritat~\cite{BC}. The equivalence
\eqref{eq:BY} is not reproved here. Its necessity is specific to
this quadratic family and must not be assumed for an arbitrary germ
having the same multiplier.

\begin{corollary}[The Brjuno dichotomy]\label{cor:dichotomy}
For every irrational $\alpha$, put $\lambda=e^{2\pi i\alpha}$. Then
\begin{equation}\label{eq:dichotomy}
 \Cent(Q_\alpha)=
 \begin{cases}
 \{\psi^{-1}\circ R_\mu\circ\psi:\mu\in\C^*\},
   &\alpha\in\Bcal,\\[1mm]
 \{\iter {Q_\alpha} k:k\in\Z\},&\alpha\notin\Bcal,
 \end{cases}
\end{equation}
where  $\psi\circ Q_\alpha\circ\psi^{-1}=R_\lambda$ when $\alpha\in\Bcal$.
In particular,
\begin{equation}\label{eq:iff-trivial}
 \Cent(Q_\alpha)\text{ is trivial}
 \quad\Longleftrightarrow\quad \alpha\notin\Bcal.
\end{equation}
\end{corollary}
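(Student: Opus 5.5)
The corollary follows by splitting on whether $\alpha\in\Bcal$, using the Brjuno--Yoccoz equivalence \eqref{eq:BY} in each case together with Theorem~\ref{thm:main} in the non-Brjuno case.

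\emph{Case $\alpha\notin\Bcal$.} By \eqref{eq:BY}, $Q_\alpha$ is not analytically linearizable at $0$. Theorem~\ref{thm:main} then gives $\Cent(Q_\alpha)=\cyc{Q_\alpha}$ directly, which is the second line of \eqref{eq:dichotomy}.

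\emph{Case $\alpha\in\Bcal$.} By \eqref{eq:BY} there is $\psi\in\Diff$ with $\psi\circ Q_\alpha\circ\psi^{-1}=R_\lambda$. Conjugation by $\psi$ is a group isomorphism from $\Cent(Q_\alpha)$ onto $\Cent(R_\lambda)$, so it suffices to show $\Cent(R_\lambda)=\{R_\mu:\mu\in\C^*\}$.
\begin{itemize}
\item[] The inclusion $\supseteq$ is immediate, since dilations commute.
\item[] For $\subseteq$, write $g(w)=\sum_{n\ge1}b_nw^n\in\Cent(R_\lambda)$. Comparing coefficients in $g(\lambda w)=\lambda g(w)$ gives $b_n(\lambda^n-\lambda)=0$ for every $n$.
\item[] Since $\alpha$ is irrational, $\lambda^{n-1}\neq1$ for $n\ge2$, so $b_n=0$ for all $n\ge2$.
\item[] Hence $g=R_{b_1}$ with $b_1\ne0$.
\end{itemize}
Transporting back by $\psi$ gives the first line of \eqref{eq:dichotomy}. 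The description does not depend on the choice of $\psi$: any two linearizers differ by an element of $\Cent(R_\lambda)$, i.e.\ by a dilation, and dilations commute with the $R_\mu$.

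\emph{Equivalence \eqref{eq:iff-trivial}.} If $\alpha\notin\Bcal$, triviality is exactly the non-Brjuno case above. If $\alpha\in\Bcal$, we show the centraliser is strictly larger than $\cyc{Q_\alpha}$.
\begin{itemize}
\item[] The multipliers of elements of $\cyc{Q_\alpha}$ are $\lambda^k$ with $k\in\Z$, which form a countable subset of the unit circle.
\item[] The multipliers of elements of $\Cent(Q_\alpha)$ are all of $\C^*$, because $(\psi^{-1}\circ R_\mu\circ\psi)'(0)=\mu$.
\item[] Taking $\mu=2$ therefore gives an element of $\Cent(Q_\alpha)\setminus\cyc{Q_\alpha}$.
\end{itemize}
So the centraliser is trivial exactly when $\alpha\notin\Bcal$.

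There is no real obstacle here. All the depth lies in Theorem~\ref{thm:main} and in the cited equivalence \eqref{eq:BY}. The only point needing care is the small-divisor-free coefficient comparison, which uses only that $\lambda$ is not a root of unity.
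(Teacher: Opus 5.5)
Your proposal is correct and follows essentially the same route as the paper. In the non-Brjuno case you combine \eqref{eq:BY} with Theorem~\ref{thm:main}; in the Brjuno case you conjugate to $R_\lambda$ and use the coefficient identity $(\lambda^n-\lambda)c_n=0$; and for \eqref{eq:iff-trivial} you compare the multipliers $\{\lambda^k\}$ with $\C^*$. Your two additions, the independence of the choice of $\psi$ and the explicit witness $\mu=2$, are both correct.
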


Our definition allows every nonzero multiplier for $h$. If one restricts
the centraliser to germs with multiplier of modulus one, the group
$\C^*$ in the first line of \eqref{eq:dichotomy} is replaced by the unit circle $\T=\R/\Z$.
For a nonlinearizable irrationally indifferent germ, a commuting
invertible germ necessarily has multiplier of modulus one. An
independent proof of this fact is given in
Lemma~\ref{lem:unit-multiplier}. Thus the non-Brjuno
conclusion agrees with the unit-multiplier convention for local germs
used in \cite{ACE}.

\subsection{Relation to previous work}
The dense-parameter theorem of Avila--Cheraghi--Eliad
\cite[Theorem~1.1]{ACE} is an existence and density result. Their
Theorem~1.2 supplies the parabolic parameters, while their Section~3
builds the irrational parameters by successive perturbation.
Theorem~\ref{thm:main} differs in kind, because it starts from an
arbitrary nonlinearizable member of the fixed quadratic family, so it
neither selects the parameter nor requires a subsequence of exceptionally
strong rational approximations. The all-parameter statement is therefore
a strengthening of their theorem rather than a restatement of a
conjecture explicitly formulated in \cite{ACE}. The open problem on
p.~887 of that paper asks for the optimal joint arithmetic condition for
simultaneous linearization of commuting maps, and since that problem is
related to Moser's simultaneous approximation approach~\cite{Moser}, it
is distinct from the classification of the centraliser of the particular
polynomial $Q_\alpha$.

A second distinction separates the classification of all germs with a
prescribed multiplier from the classification of the centraliser of a
specified polynomial. The large-centraliser examples of P\'erez
Marco~\cite{PM} show that nonlinearizability does not force cyclicity for
general germs, and Biswas describes nonlinearizable maximal abelian
groups by germs of hedgehogs~\cite[Theorems~3.9--3.10]{Biswas}. Such
structural results do not identify the symmetry group of each quadratic
member, which is why a geometric description of local invariant sets and
an exact calculation of the centraliser are different assertions.

Renormalization supplies complementary information about irrationally
indifferent dynamics. Cheraghi's topological trichotomy
\cite[Theorem~A]{Cheraghi}, for instance, describes postcritical
attractors for sufficiently high-type rotation numbers in the
Inou--Shishikura class, which includes the corresponding quadratic maps.
We neither remove the high-type hypothesis from that topological theorem
nor use its conclusion as an input, since the present argument classifies
analytic symmetries without assuming a description of a hedgehog, a
marked critical orbit, or the postcritical set.

The proof combines existing local-to-global and polynomial rigidity
methods. Its three published ingredients are the graph construction of
Inou~\cite[Theorem~2]{Inou}, the transcendental semiconjugacy obstructions
that trace back to Eremenko~\cite[Theorem~2]{Eremenko}, and the polynomial
factorization of Pakovich~\cite[Theorem~1.3]{Pakovich} that is used at the
last step. The contribution is the all-parameter quadratic conclusion,
obtained by retaining a nonlinearizable neutral fixed point through
globalization. The construction, the uniformization, and the polynomial
classification are not claimed as new theories, so the proofs below make
explicit how their hypotheses are met in the present neutral setting.

\subsection{Outline of the proofs}

The proof proceeds in four steps.

\textbf{Step 1 (globalization).}
The starting point is the local-to-global mechanism, which separates
the analytic continuation problem from the global polynomial commutant
problem. Given $h\in\Cent(f)$, choose a representative on a disc $U$ and
set
\begin{equation*}
 F=(f,f),\qquad \Psi_0(z)=(z,h(z)),\qquad
 \Gamma=\bigcup_{n\ge0}F^{\circ n}(\Psi_0(U)).
\end{equation*}
The set $\Gamma$ need not initially be a closed analytic subset of $\C^2$.
Different local branches above the same point are therefore retained
separately. This produces a Riemann surface $X$, a self-map $G$, and
holomorphic projections $\phi_1,\phi_2$ satisfying
\begin{equation*}
 \phi_i\circ G=f\circ\phi_i\quad(i=1,2),
 \qquad h=\phi_2\circ\phi_1^{-1}\text{ near a marked point }x_*.
\end{equation*}
This is the polynomial specialization of the desingularization
construction in Inou~\cite[Theorem~2]{Inou}. We give its branch-coordinate
implementation and prove the topological and local tracking properties
needed for the neutral fixed-point argument. In particular, with
$\deg_xG$ denoting local degree (fixed in Section~\ref{sec:surface}),
\begin{equation}\label{eq:overview-lift}
 G(x_*)=x_*,\qquad G'(x_*)=\lambda,
 \qquad \sum_{G(x)=y}\deg_x G\le4.
\end{equation}
The fixed point is an interior point of the surface and its local germ
is conjugate to $f$.

\textbf{Step 2 (uniformization).}
The first rigidity step identifies the surface. Nonlinearizability rules
out a hyperbolic surface (one whose universal cover is the unit disc),
and the uniformization argument then excludes
the remaining possibilities. Here $\simeq$ denotes biholomorphic
equivalence of Riemann surfaces, so that
\begin{equation}\label{eq:roadmap-one}
 f\text{ nonlinearizable}
 \Longrightarrow X\text{ nonhyperbolic}
 \Longrightarrow X\simeq\C.
\end{equation}
The first implication uses Schwarz's equality case.

\textbf{Step 3 (polynomiality).}
Once the surface is the plane, the projections and the self-map are
forced to be polynomial, and the self-map is forced to have degree two,
where $\deg$ denotes polynomial degree, $\#S$ denotes the cardinality
of a set $S$, and $\C[z]$ denotes the ring of polynomials in $z$.
\begin{align}
 \#G^{-1}(y)\le4
 &\Longrightarrow G\in\C[z],\quad 2\le\deg G\le4,\label{eq:roadmap-two}\\
 \phi_i\circ G=\phi_i^2+\lambda\phi_i
 &\Longrightarrow \phi_i\in\C[z],\quad\deg G=2.\label{eq:roadmap-three}
\end{align}
For the second implication, Eremenko's semiconjugacy theorem already
provides a general rigidity framework~\cite{Eremenko}, as used in
\cite[Section~3.2]{Inou} and \cite[Theorem~2.3]{JX}. Here the quadratic
degree permits the self-contained estimate
\begin{equation*}
 H(cr^2)\le H(r)^2
 \quad\Longrightarrow\quad H(r)\le Cr^K
\end{equation*}
for a modified maximum modulus of $\phi_i$.

\textbf{Step 4 (polynomial commutant classification).}
After affine normalization of $G$, the two projections become
polynomials commuting with $f$. The minimal-degree commutant theorem
of Pakovich~\cite[Theorem~1.3]{Pakovich}, together with a coefficient
calculation for affine maps, gives
\begin{equation*}
 \phi_i\circ A^{-1}=\iter f{n_i},\qquad
 h=\iter f{n_2-n_1}.
\end{equation*}
The global polynomial theorem is applied only after the projections have
been proved polynomial. It is not applied to the original local germ.

Local-to-global rigidity at repelling fixed points is developed further
by Ji--Xie~\cite[Section~2]{JX}. In the cited version, their
Theorem~2.2 records Inou's construction and their Theorem~2.3 records
the Eremenko obstruction. Their Theorem~2.1 additionally assumes a
repelling fixed point and a non-injective restriction. We do not apply
that theorem to a neutral point: instead,
Lemma~\ref{lem:schwarz} and Proposition~\ref{prop:plane} supply the
required exclusion of a hyperbolic surface and of a degree-one plane
lift. This is the point at which nonlinearizability replaces expansion
in the present argument.

\subsection{Organization}
The argument is organized in four sections.
Chapter~\ref{sec:globalize} builds the globalization of the local germ:
Section~\ref{sec:branches} supplies the analytic branch calculations and
Section~\ref{sec:surface} constructs the surface and proves
\eqref{eq:overview-lift}. Chapter~\ref{sec:unif-plane} identifies that
surface, giving \eqref{eq:roadmap-one}--\eqref{eq:roadmap-two}.
Chapter~\ref{sec:rigidity} then forces the projections to be iterates
of $f$: the growth estimate in Section~\ref{sec:growth} yields
\eqref{eq:roadmap-three}, and Section~\ref{sec:polynomial} records the
exact external polynomial factorization theorem used to classify the
polynomial commutants. Chapter~\ref{sec:conclude} completes the main
proof, identifies the globalized curve, and gives the formal convergence
and iterative-root consequences. The role of each hypothesis is explained by a remark placed where it is used: the quadratic degree in Remark~\ref{rem:quadratic-degree}, and the arithmetic and nonlinearizability inputs in Remarks~\ref{rem:arithmetic-qualitative} and~\ref{rem:nonlinearizability}.

\section{Analytic branches and the globalization surface}\label{sec:globalize}

\subsection{Analytic branches and primitive coordinates}\label{sec:branches}

Write $\disc r=\{z\in\C:|z|<r\}$. A branch at $a\in\C^2$ means a
reduced irreducible one-dimensional analytic germ based at $a$.
A primitive parametrization of such a branch is a holomorphic
parametrization that is one-to-one on a sufficiently small disc.
Its derivative may vanish at the centre, so that injectivity, rather
than an immersion at the centre, is the property needed in the sequel.
The normalization and separation facts are standard in local
analytic-curve theory, as in Chirka~\cite{Chirka} and in the discussion
preceding Theorem~2 in Inou~\cite{Inou}. We nevertheless supply direct
proofs for the parametrized branches that actually occur, so that no
global analytic-set structure on the forward-image union is presupposed.

\begin{lemma}[An explicit primitive parametrization]\label{lem:primitive}
Let
\begin{equation*}
 \Psi(t)=(A(t),B(t)),\qquad \Psi(0)=(a,b),
\end{equation*}
be a holomorphic germ with $A$ nonconstant. There exist a local
coordinate $s=s(t)$, an integer $\ell\ge1$, and a holomorphic map
$\nu$ such that
\begin{equation}\label{eq:primitive-factor}
 \Psi(t)=\nu(s(t)^\ell),\qquad s(0)=0,\quad s'(0)\ne0.
\end{equation}
The map $\nu$ parametrizes the reduced irreducible image branch, is
injective on a sufficiently small disc, and is an immersion off its
centre.
\end{lemma}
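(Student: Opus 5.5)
Since $A$ is nonconstant, write $A(t)-a=t^m u(t)$ with $m\ge1$ and $u(0)\ne0$. Choose a holomorphic $m$-th root of $u$ near $0$ and put $\tau=t\,u(t)^{1/m}$, a local coordinate with $A=a+\tau^m$. In this coordinate expand $B(\tau)-b=\sum_{j\ge1}b_j\tau^j$. Let $\ell=\gcd\bigl(m,\{j:b_j\ne0\}\bigr)$, which is well defined since $m$ itself lies in the set. Then $A-a$ and $B-b$ are both power series in $\tau^\ell$. Setting $s=\tau$ and $\sigma=s^\ell$, we get $\nu(\sigma)=(a+\sigma^{m/\ell},\,b+\sum_k b_{k\ell}\sigma^k)$, which is holomorphic and satisfies \eqref{eq:primitive-factor}. (If $B$ is constant, the exponent set is just $\{m\}$, so $\ell=m$ and $\nu(\sigma)=(a+\sigma,b)$ is trivially injective.)

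Injectivity of $\nu$ on a small disc is the main step. Put $p=m/\ell$ and write $\nu(\sigma)=(a+\sigma^p,\,b+\beta(\sigma))$. By construction $\gcd(p,\{k:\beta_k\ne0\})=1$. Suppose $\nu(\sigma)=\nu(\sigma')$ with $\sigma,\sigma'$ small. Then $\sigma'=\omega\sigma$ for some $p$-th root of unity $\omega$. It remains to rule out $\omega\ne1$ with $\sigma\ne0$, i.e.\ to show that $\beta(\omega\sigma)-\beta(\sigma)$ has no zeros near $0$ except at $\sigma=0$. For fixed $\omega\ne1$, the series $\beta(\omega\sigma)-\beta(\sigma)=\sum_k\beta_k(\omega^k-1)\sigma^k$ is not identically zero. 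Indeed, if it were, then $\omega^k=1$ for every $k$ with $\beta_k\ne0$, and also $\omega^p=1$, so the order of $\omega$ would divide the gcd, which is $1$. Hence this series has an isolated zero at $0$. Since there are only finitely many $\omega$, one radius $\rho>0$ works for all of them simultaneously, and $\nu$ is injective on $\disc\rho$. This finiteness-of-roots-of-unity argument is the step I expect to need the most care. The subtlety is the order of quantifiers: both $\sigma$ and $\omega\sigma$ must stay in the disc, but this is automatic because $|\omega|=1$.

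The immersion property off the centre follows from injectivity. If $\nu'(\sigma_0)=0$ for some $\sigma_0\ne0$, then $p\sigma_0^{p-1}\ne0$ forces $p=0$, which is impossible. So actually $\nu_1'(\sigma)=p\sigma^{p-1}\ne0$ for $\sigma\ne0$ directly, and $\nu$ is an immersion on $\disc\rho\setminus\{0\}$.

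Finally, the image germ $\nu(\disc\rho)$ is a one-dimensional analytic germ. It is irreducible because it is the injective image of a disc, hence locally connected away from the centre. It is reduced and equals the image branch of $\Psi$ because $\Psi=\nu\circ(s^\ell)$ and $s\mapsto s^\ell$ is surjective onto a neighbourhood of $0$. If one wants it explicitly, the defining function is the resultant $\prod_{\omega^p=1}\bigl(w-b-\beta(\omega(z-a)^{1/p})\bigr)$, which is a Weierstrass polynomial in $w$. Injectivity shows that it is squarefree, giving reducedness without appeal to general normalization theory.
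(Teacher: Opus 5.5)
Your construction and injectivity argument are the same as the paper's. You use the same normalizing coordinate $t\,u(t)^{1/m}$ and the same exponent $\ell=\gcd(\{m\}\cup\{j:b_j\ne0\})$. You reduce $\nu(\sigma)=\nu(\sigma')$ to $\sigma'=\omega\sigma$ and show $\beta(\omega\sigma)-\beta(\sigma)\not\equiv0$ by the same order-divides-the-gcd argument. The immersion computation is the same, and so is the product over $p$-th roots of unity used as a defining function. The factorization, injectivity and immersion parts are correct.

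The step that does not stand as written is irreducibility. Being \emph{locally connected away from the centre} is true of every curve germ, including the reducible germ $\{xy=0\}$ at the origin, so it cannot be the reason. The property that actually matters is connectedness of the parameter disc. The clean way to use it is the paper's pullback argument. If the image were $C_1\cup C_2$ with proper components and defining functions $q_i$, then $(q_1\circ\nu)(q_2\circ\nu)\equiv0$ in the integral domain $\C\{\sigma\}$. So one factor vanishes identically, and the whole image lies in one $C_i$. This argument uses no injectivity at all.

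There are also two smaller points. First, your remark that the immersion property \emph{follows from injectivity} is false in general; the direct computation $p\sigma^{p-1}\ne0$ that you then give is the actual proof. Second, calling your product a Weierstrass polynomial needs the paper's observation about its coefficients. As functions of $\sigma$ they are invariant under $\sigma\mapsto\xi\sigma$ for $\xi^p=1$, hence convergent power series in $\sigma^p=z-a$. This is also what justifies your opening claim that the image is an analytic germ.
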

\begin{proof}
Write $\ord_a$ for the order of vanishing of a germ at a point $a$,
and let $\ord_0$ denote the case $a=0$. With this convention,
set $m=\ord_0(A-a)$. Then
\begin{equation*}
 A(t)-a=t^m u(t),\qquad u(0)\ne0.
\end{equation*}
After shrinking the disc, $u$ has a holomorphic $m$th root. The change
of variable
\begin{equation*}
 s(t)=t\,u(t)^{1/m}
\end{equation*}
satisfies $s'(0)=u(0)^{1/m}\ne0$ and yields
\begin{equation}\label{eq:s-series}
 A=a+s^m,\qquad B=b+\sum_{k\ge1}b_k s^k.
\end{equation}
Define
\begin{equation*}
 S=\{k\ge1:b_k\ne0\},\qquad
 \ell=\gcd(\{m\}\cup S),\qquad M=m/\ell,
\end{equation*}
where $\gcd$ of a set of integers means their greatest common
divisor, and $\ell=m$ when $S=\varnothing$. In the variable
$\tau=s^\ell$,
put
\begin{equation}\label{eq:nu}
 \nu(\tau)=\left(a+\tau^M,
    b+\sum_{k\in S}b_k\tau^{k/\ell}\right)
             =\bigl(a+\tau^M,B_*(\tau)\bigr).
\end{equation}
All exponents in \eqref{eq:nu} are integers. If the series in
\eqref{eq:s-series} converges for $|s|<r$, the second component of
\eqref{eq:nu} converges for $|\tau|<r^\ell$. This proves
\eqref{eq:primitive-factor}.

Suppose $\nu(\tau)=\nu(\sigma)$. Equality of first components gives
\begin{equation}\label{eq:root-comparison}
 \sigma^M=\tau^M.
\end{equation}
If $\tau=0$, then $\sigma=0$. If $\tau\ne0$, then
$\sigma=\zeta\tau$ for some $\zeta^M=1$. For each $\zeta\ne1$
with $\zeta^M=1$, set
\begin{equation*}
 D_\zeta(\tau)=B_*(\tau)-B_*(\zeta\tau)
 =\sum_{k\in S}b_k(1-\zeta^{k/\ell})\tau^{k/\ell}.
\end{equation*}
The equality
\begin{equation}\label{eq:primitive-gcd}
 \gcd\bigl(\{M\}\cup\{k/\ell:k\in S\}\bigr)=1
\end{equation}
implies $D_\zeta\not\equiv0$. Indeed, if $D_\zeta$ vanished identically
and $r$ were the order of $\zeta$, then $r$ would divide every integer
in \eqref{eq:primitive-gcd}, so that $r=1$, which contradicts
$\zeta\ne1$. Therefore
\begin{equation}\label{eq:isolated-difference}
 D_\zeta(\tau)=\tau^{m_\zeta}v_\zeta(\tau),
 \qquad v_\zeta(0)\ne0.
\end{equation}
Since there are finitely many such $\zeta$, one can choose
$\varepsilon>0$ so that all $v_\zeta$ are nonzero on $\disc\varepsilon$.
Equations \eqref{eq:root-comparison}--\eqref{eq:isolated-difference}
then give
\begin{equation*}
 \nu(\tau)=\nu(\sigma),\quad |\tau|,|\sigma|<\varepsilon
 \quad\Longrightarrow\quad \tau=\sigma.
\end{equation*}
Moreover,
\begin{equation*}
 \frac{\mathrm d}{\mathrm d\tau}(a+\tau^M)=M\tau^{M-1}\ne0
 \qquad(0<|\tau|<\varepsilon),
\end{equation*}
so $\nu$ is an immersion away from its centre.

We verify that the image is an analytic branch. Consider
\begin{equation}\label{eq:weierstrass-product}
 W(a+\tau^M,y)=\prod_{\zeta^M=1}\bigl(y-B_*(\zeta\tau)\bigr).
\end{equation}
Each coefficient on the right is invariant under $\tau\mapsto\xi\tau$
for $\xi^M=1$. Its convergent Taylor expansion therefore contains only
powers $\tau^{jM}$, so it is a convergent power series in
$x-a=\tau^M$. Thus $W(x,y)$ is holomorphic near $(a,b)$ and monic in
$y$. For $x$ near $a$, choose $\tau^M=x-a$.
Then
\begin{align}\label{eq:zero-set-param}
 W(x,y)=0
 &\Longleftrightarrow y=B_*(\zeta\tau)\text{ for some }\zeta^M=1\\
 &\Longleftrightarrow (x,y)=\nu(\zeta\tau)\text{ for some }\zeta^M=1.
 \nonumber
\end{align}
This identifies the germ of the zero set with the parametrized image.
The factors in \eqref{eq:weierstrass-product} are distinct over the
punctured parameter disc by \eqref{eq:isolated-difference}.

For irreducibility, suppose the image were a union of two proper
analytic curve germs $C_1\cup C_2$. Choose local defining functions
$q_i$ that vanish on $C_i$ but do not vanish identically on the
other component. Pullback gives
\begin{equation*}
 (q_1\circ\nu)(q_2\circ\nu)\equiv0.
\end{equation*}
The ring $\C\{\tau\}$ of convergent power series in $\tau$ is an integral domain, so one factor is
identically zero. Equation \eqref{eq:zero-set-param} then puts the
whole image in one proper component, contradicting the assumed
decomposition. The reduced image is consequently irreducible.
\end{proof}

\begin{lemma}[Uniqueness of the primitive coordinate]\label{lem:transition}
If $\nu_1$ and $\nu_2$ are primitive parametrizations of the same
branch at $a$, with $\nu_i(0)=a$, then, after shrinking their domains,
\begin{equation*}
 \nu_1=\nu_2\circ\theta,
 \qquad \theta(0)=0,\quad\theta'(0)\ne0,
\end{equation*}
for a biholomorphic germ $\theta$.
\end{lemma}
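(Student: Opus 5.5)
The plan is to build $\theta$ as the composite $\nu_2^{-1}\circ\nu_1$ and show it is holomorphic. Both $\nu_i$ are injective on small discs with the same image germ, so this composite is a well-defined continuous bijection of punctured neighbourhoods. Holomorphy will come from the fact that each $\nu_i$ is an immersion off its centre (Lemma~\ref{lem:primitive}), followed by removal of the singularity at the centre.

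\textbf{Step 1 (setwise bijection).} Since $\nu_1$ and $\nu_2$ parametrize the same branch $C$, shrink the domains so that $\nu_1(\disc{\varepsilon_1})\subset\nu_2(\disc{\varepsilon_2})$. This is possible because both images are neighbourhoods of $a$ in $C$: each $\nu_i$ is a continuous injection from a disc whose image is the full germ $C$, hence open in $C$ by invariance of domain on the punctured part. With $\nu_2$ injective, define $\theta=\nu_2^{-1}\circ\nu_1$ pointwise. Then $\theta(0)=0$ and $\theta$ is injective.

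\textbf{Step 2 (holomorphy off the centre).} $\theta$ is continuous, because $\nu_2$ is a continuous injection from a compact closed disc and so a homeomorphism onto its image. For $\tau\ne0$, $\nu_2$ is an immersion near $\theta(\tau)$, so one coordinate of $\nu_2$ has nonzero derivative there; call it the $j$-th. In that coordinate the $j$-th component $\nu_{2,j}$ is locally biholomorphic near $\theta(\tau)$. Continuity of $\theta$ then lets us write $\theta=\nu_{2,j}^{-1}\circ\nu_{1,j}$ near $\tau$, which is holomorphic. Thus $\theta$ is holomorphic on the punctured disc, continuous, and bounded.

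\textbf{Step 3 (removable singularity and nonvanishing derivative).} By Riemann's theorem $\theta$ is holomorphic on $\disc{\varepsilon_1}$. An injective holomorphic map has nonvanishing derivative, since a zero of order $k\ge2$ of $\theta-\theta(0)$ makes $\theta$ locally $k$-to-$1$. Hence $\theta'(0)\ne0$ and $\theta$ is a biholomorphic germ with $\nu_1=\nu_2\circ\theta$.

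\textbf{Main obstacle.} The delicate point is Step 1: showing that the images coincide as germs of sets and that $\nu_2$ is a homeomorphism onto a neighbourhood of $a$ in $C$, rather than covering only part of it. I would handle this via the Weierstrass description \eqref{eq:zero-set-param}. Near $a$, $C$ is the zero set of $W$, and for $x\ne a$ close to $a$ it consists of exactly $M$ points over each $x$. If $\nu_2$ is injective and immersive off the centre, then the first coordinate composed with $\nu_2$ covers $x$ with total multiplicity $M_2$, counted by the order of the first coordinate of $\nu_2-a$. Injectivity forces $M_2=M$, so $\nu_2$ hits every point of $C$ over a small disc. This gives surjectivity onto the germ; continuity of the inverse then follows from the compactness argument above.
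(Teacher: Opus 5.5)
Your proof is correct and follows the paper's argument. $\theta=\nu_2^{-1}\circ\nu_1$ is holomorphic off the centre because $\nu_2$ is an immersion there, and it is continuous at $0$ because $\nu_2$ is a homeomorphism on a compact disc, so the singularity is removable. The only variation is that you get $\theta'(0)\ne0$ from injectivity of $\theta$, whereas the paper builds $\eta=\nu_1^{-1}\circ\nu_2$ and differentiates $\eta\circ\theta=\id$.

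Your ``main obstacle'' paragraph is unnecessary. Equality of the image germs is exactly what it means for both $\nu_i$ to parametrize $C$, and the paper uses it directly. As written, that paragraph also misattributes the conclusion: injectivity of $\nu_2$ only gives $M_2\le M$. Equality $M_2=M$ comes from $\nu_2$ parametrizing all of $C$ (or from irreducibility of $C$), not from injectivity.
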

\begin{proof}
By equality of the image germs, the expression
\begin{equation*}
 \theta=\nu_2^{-1}\circ\nu_1
\end{equation*}
is defined on a sufficiently small punctured disc. Both parametrizations
are immersions there, so $\theta$ is holomorphic there. Choose
representatives $\nu_i$ injective on slightly larger closed discs.
A continuous injection from a compact space into the Hausdorff space
$\C^2$ is a homeomorphism onto its image, so
\begin{equation*}
 \nu_1(t)\longrightarrow a
 \quad\Longrightarrow\quad
 \nu_2^{-1}(\nu_1(t))\longrightarrow0.
\end{equation*}
Hence $\theta$ extends continuously to $0$ with value $0$, and its
singularity there is removable. Reversing the roles
of $\nu_1$ and $\nu_2$ gives a holomorphic extension of
$\eta=\nu_1^{-1}\circ\nu_2$. On punctured discs,
\begin{equation*}
 \eta\circ\theta=\id,
 \qquad\theta\circ\eta=\id.
\end{equation*}
The identity theorem extends these equations across $0$. Differentiating
at $0$ gives $\eta'(0)\theta'(0)=1$, proving the assertion.
\end{proof}

\begin{lemma}[Separation of distinct branches]\label{lem:separation}
Distinct irreducible analytic branches $C_1,C_2$ at $a\in\C^2$
have sufficiently small representatives whose intersection is
contained in $\{a\}$.
\end{lemma}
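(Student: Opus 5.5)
We prove Lemma~\ref{lem:separation} by parametrizing both branches and reducing to isolated zeros of one-variable holomorphic functions. By Lemma~\ref{lem:primitive} (after a linear change of coordinates in $\C^2$ making the first coordinate nonconstant on each branch), choose primitive parametrizations $\nu_1,\nu_2$ with $\nu_i(0)=a$. We may take each injective on a closed disc and, by the Weierstrass construction \eqref{eq:weierstrass-product}, each $C_i$ is the zero germ of a holomorphic $W_i$ that is monic in $y$. Write $\nu_1(\tau)=(a_1+\tau^{M_1},B_1(\tau))$ in those coordinates.

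The key step is the dichotomy for $g(\tau)=W_2(\nu_1(\tau))$, which is holomorphic near $0$. Either $g\equiv0$ or $g$ has an isolated zero at $\tau=0$. Suppose $g\equiv0$. Then the whole parametrized image of $\nu_1$ lies in the zero set of $W_2$, which by \eqref{eq:zero-set-param} is exactly the image of $\nu_2$. So the germ $C_1$ is contained in $C_2$.

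Both germs are one-dimensional and $C_2$ is irreducible by Lemma~\ref{lem:primitive}, so this containment forces $C_1=C_2$. To check this directly, note that $\nu_2^{-1}\circ\nu_1$ is defined and, by the argument of Lemma~\ref{lem:transition}, continuous at $0$. It is also nonconstant, since $\nu_1$ is nonconstant. By the open mapping theorem its image contains a neighbourhood of $0$, so $C_2\subset C_1$ as germs. This contradicts distinctness.

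Hence $g$ has an isolated zero at $0$. Choose $\varepsilon>0$ with $g(\tau)\neq0$ for $0<|\tau|<\varepsilon$. By the homeomorphism property in the proof of Lemma~\ref{lem:transition}, the set $\nu_1(\disc\varepsilon)$ contains $C_1\cap U$ for some neighbourhood $U$ of $a$. Choose a neighbourhood $V\subset U$ of $a$ on which $W_2$ is defined and $\{W_2=0\}\cap V=C_2\cap V$.

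Now let $p\in C_1\cap C_2\cap V$. Then $p=\nu_1(\tau)$ with $|\tau|<\varepsilon$, and $g(\tau)=W_2(p)=0$. This forces $\tau=0$, so $p=a$. The representatives $C_1\cap V$ and $C_2\cap V$ therefore meet only in $\{a\}$.

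The main obstacle is the containment step. We need the zero set of $W_2$ to coincide with the image of $\nu_2$ on a uniform neighbourhood, not merely as germs, and we must pass from a one-sided containment of germs to equality. Lemma~\ref{lem:primitive} gives the first point. The open-mapping argument above handles the second, so that no general theory of analytic sets is invoked.
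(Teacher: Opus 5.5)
Your proof is correct and follows the paper's argument: you pull back a defining function of $C_2$ along a primitive parametrization of $C_1$, and then use that a holomorphic function of one variable that is not identically zero has an isolated zero at $0$. The only difference is that you take the explicit Weierstrass function $W_2$ as the defining function and prove, via the open-mapping argument, that it cannot vanish identically along $\nu_1$; the paper simply assumes this when it chooses $q$.
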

\begin{proof}
Choose a defining holomorphic germ $q$ for $C_2$ which does not vanish
identically on $C_1$, and let $\nu_1$ be a primitive parametrization
of $C_1$. Then
\begin{equation*}
 q\circ\nu_1(t)=t^s u(t),\qquad u(0)\ne0.
\end{equation*}
On a sufficiently small disc, $u(t)\ne0$. Hence
\begin{equation*}
 \nu_1(t)\in C_2
 \quad\Longrightarrow\quad q(\nu_1(t))=0
 \quad\Longrightarrow\quad t=0
 \quad\Longrightarrow\quad \nu_1(t)=a.
\end{equation*}
The required representatives follow by restricting both branches.
\end{proof}

\begin{lemma}[The induced map on a branch]\label{lem:branch-map}
Let $F=(f_1,f_2)$ be a product of nonconstant polynomials. If $C$
is a branch with nonconstant first coordinate, then $F(C)$ determines
a single irreducible branch, denoted $F_*C$. The map induced by $F$
between primitive parameter discs is holomorphic, including at their
centres, and is locally nonconstant.
\end{lemma}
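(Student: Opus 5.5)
Take a primitive parametrization $\nu$ of $C$ from Lemma~\ref{lem:primitive}, written $\nu(\tau)=(a+\tau^M,B_*(\tau))$ with $a=(a_1,a_2)$ denoting the base point, and consider the composite germ
\begin{equation*}
 \Phi(\tau)=F(\nu(\tau))=\bigl(f_1(a_1+\tau^M),\,f_2(B_*(\tau))\bigr).
\end{equation*}
The first coordinate of $\Phi$ is nonconstant: $f_1$ is a nonconstant polynomial and $\tau\mapsto a_1+\tau^M$ is open, so $f_1(a_1+\tau^M)$ cannot be constant near $0$. Hence Lemma~\ref{lem:primitive} applies to $\Phi$ itself. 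It produces a local coordinate $s=s(\tau)$, an integer $\ell'\ge1$ and a primitive parametrization $\nu'$ with $\Phi(\tau)=\nu'(s(\tau)^{\ell'})$, and it asserts that the image of $\nu'$ is a reduced irreducible branch at $F(a)$. I define $F_*C$ to be this branch.

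The first point is that $F_*C$ depends only on $C$ and not on the chosen parametrization. If $\nu_1$ is another primitive parametrization of $C$, Lemma~\ref{lem:transition} gives $\nu_1=\nu\circ\theta$ with $\theta$ biholomorphic. Then $F\circ\nu_1=\Phi\circ\theta$ has the same image germ as $\Phi$. The image germ of $\Phi$ is exactly the germ of $F(C)$, because $\nu$ maps small discs onto representatives of $C$. So $F(C)$ determines the single branch $F_*C$.

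The induced map between parameter discs is $\tau\mapsto\sigma=s(\tau)^{\ell'}$, which satisfies $\Phi=\nu'\circ(s^{\ell'})$. It is holomorphic, including at $\tau=0$, since $s$ is a holomorphic coordinate. It is nonconstant because $\ell'\ge1$ and $s'(0)\ne0$. For any other primitive parametrization $\nu''$ of $F_*C$, Lemma~\ref{lem:transition} gives $\nu'=\nu''\circ\eta$ with $\eta$ biholomorphic. The induced map toward $\nu''$ is then $\eta\circ s^{\ell'}$, which is again holomorphic and nonconstant at the centre. Its uniqueness comes from injectivity of $\nu''$ on a small disc.

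The only delicate step is showing that the map $(\nu')^{-1}\circ F\circ\nu$ is holomorphic at the centre, where $\nu'$ may fail to be an immersion. Composing with the inverse of $\nu'$ directly would break down there, so the argument must not do that. Instead it uses the explicit factorization $\Phi=\nu'\circ s^{\ell'}$ furnished by Lemma~\ref{lem:primitive}. This factorization settles holomorphy at the centre without inverting $\nu'$.
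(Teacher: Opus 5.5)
Your proposal is correct and is essentially the paper's proof: you apply Lemma~\ref{lem:primitive} to $F\circ\nu$, whose first coordinate is nonconstant, to get $F\circ\nu=\nu'\circ s^{\ell'}$, take $t\mapsto s(t)^{\ell'}$ as the induced map (holomorphic at the centre without inverting $\nu'$), and use Lemma~\ref{lem:transition} for independence of both parametrizations. The only slip is notational: the first coordinate of $\nu$ should read $a_1+\tau^M$, which is what you in fact use when writing $\Phi$.
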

\begin{proof}
Let $\nu$ parametrize $C$. The first coordinate of $F\circ\nu$
is nonconstant. Lemma~\ref{lem:primitive} gives
\begin{equation*}
 F\circ\nu(t)=\nu_*(s(t)^\ell),
 \qquad s'(0)\ne0,\quad \ell\ge1,
\end{equation*}
where $\nu_*$ is primitive. In these two coordinates the induced
map is
\begin{equation}\label{eq:branch-local-degree}
 t\longmapsto s(t)^\ell.
\end{equation}
It is holomorphic and has local degree $\ell$. Replacing either
primitive parametrization changes \eqref{eq:branch-local-degree}
by the biholomorphic transitions in Lemma~\ref{lem:transition}.
Thus both the branch $F_*C$ and the induced holomorphic map are
independent of the chosen parametrizations.
\end{proof}

\subsection{Globalization on a surface of analytic branches}\label{sec:surface}

The following proposition implements Inou's local-to-global construction
\cite[Theorem~2]{Inou} for polynomial maps, and it should be compared
with \cite[Theorem~2.2]{JX}. Its hypotheses contain only a local
functional equation and require no polynomial-like restriction. We
include the construction in order to verify, within a single
realization, both the exact original local conjugacy and a fibre bound
with local multiplicities. The finite-degree framework of Inou is the
source of this step, so we assert here no new globalization principle.

\begin{proposition}[Globalization with a retained graph]\label{prop:globalization}
Let $f_1,f_2\in\C[z]$ be nonconstant polynomials of degrees $d_1,d_2$.
Let $U\subset\C$ be a connected open set, let $h:U\to\C$ be
nonconstant and holomorphic, and suppose there is a nonempty open set $U'\subset U$
such that
\begin{equation}\label{eq:local-functional-equation}
 f_1(U')\subset U,\qquad h\circ f_1=f_2\circ h\quad\text{on }U'.
\end{equation}
Set
\begin{equation*}
 F=(f_1,f_2),\qquad
 \Psi_n(z)=\bigl(\iter{f_1}n(z),\iter{f_2}n(h(z))\bigr),\qquad
 \Gamma=\bigcup_{n\ge0}\Psi_n(U).
\end{equation*}
There exist a connected Hausdorff second-countable Riemann surface $X$,
a nonconstant holomorphic map $G:X\to X$, a holomorphic map
$\pi=(\phi_1,\phi_2):X\to\C^2$, and an open biholomorphic embedding
$j:U\to X$, with the following properties:
\begin{align}
 \pi(X)&=\Gamma,
 &\pi\circ G&=F\circ\pi,\label{eq:global-diagram}\\
 \pi\circ j&=(\id_U,h),
 &G\circ j&=j\circ f_1\quad\text{on }U'\nonumber\\
 \phi_i\circ G&=f_i\circ\phi_i\quad(i=1,2),
 &\phi_1\circ j&=\id_U.\label{eq:global-semiconjugacies}
\end{align}
\begin{equation}\label{eq:pi-locally-injective}
\pi\text{ is injective on a neighbourhood of each point of }X.
\end{equation}
Write $\deg_xG$ for the local degree of $G$ at a point $x\in X$,
namely the order with which $G$ takes the value $G(x)$ near $x$. For
every $y\in X$,
\begin{equation}\label{eq:valence-multiplicity}
 \sum_{x\in G^{-1}(y)}\deg_xG\le d_1d_2.
\end{equation}
In particular, every fibre is finite and has at most $d_1d_2$ points.
No global injectivity, finite valence, or properness of either
projection $\phi_i$ is assumed.
\end{proposition}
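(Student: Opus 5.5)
The plan is to build $X$ as a space of \emph{pointed branches}: points of $X$ are pairs $(p,C)$ with $p\in\Gamma$ and $C$ an irreducible analytic branch at $p$ contained in some $\Psi_n(V)$ for an open $V\subset U$. More concretely, I will take as generators the germs of $\Psi_n$ at points $z\in U$. Two pairs $(n,z)$ and $(m,w)$ are identified when $\Psi_n(z)=\Psi_m(w)$ and the image branches of $\Psi_n$ at $z$ and of $\Psi_m$ at $w$ coincide. Lemma~\ref{lem:primitive} applies because the first coordinate $\iter{f_1}n$ is nonconstant, so each such germ has a well-defined reduced irreducible image branch. Charts come from primitive parametrizations: near a point $(p,C)$, a primitive $\nu:\disc\varepsilon\to\C^2$ of $C$ gives nearby points $(\nu(\tau),\text{branch of }\nu\text{ at }\tau)$. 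For $\tau\neq0$ that branch is smooth, because $\nu$ is an immersion there. Lemma~\ref{lem:transition} makes transition maps biholomorphic, including at the centres, so $X$ is a Riemann surface. Injectivity of $\nu$ on small discs gives \eqref{eq:pi-locally-injective} for $\pi(p,C)=p$.

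For Hausdorffness, two distinct points over the same $p$ have distinct branches, and Lemma~\ref{lem:separation} separates their chart neighbourhoods. Connectedness and second countability hold because $X$ is the union of the images of $U$ under the continuous maps $z\mapsto[(n,z)]$, each of which is joined to the $n=0$ sheet. The joining goes as follows: the sheets $[(n,\cdot)]$ and $[(n+1,\cdot)]$ meet, since $\Psi_{n+1}=\Psi_n\circ f_1$ on $U'$ by \eqref{eq:local-functional-equation}. Indeed, $\Psi_{n+1}(z)=F^{n}(f_1 z, f_2 h z)=F^n(f_1z,h(f_1z))=\Psi_n(f_1 z)$ for $z\in U'$, with the same branches. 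The embedding is $j(z)=[(0,z)]$, and $\pi\circ j=(\id,h)$. Injectivity of $j$ is clear since the first coordinate is $\id$, and $j$ is open because $\phi_1\circ j=\id$ is a local chart.

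The map $G$ is defined by $G(p,C)=(F(p),F_*C)$, using Lemma~\ref{lem:branch-map}. It is holomorphic in charts, nonconstant, and satisfies $\pi\circ G=F\circ\pi$. It maps sheets as $[(n,z)]\mapsto[(n+1,z)]$, which gives $\pi(X)=\Gamma$ and $G\circ j=j\circ f_1$ on $U'$ by the identity displayed above. The semiconjugacies \eqref{eq:global-semiconjugacies} are the coordinates of $\pi\circ G=F\circ\pi$.

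The main obstacle is the multiplicity bound \eqref{eq:valence-multiplicity}. I will fix $y=(q,D)$ and a generic nearby point $q'$ on $D$, one not a critical value and not over the finitely many centres, so that the branch through $q'$ is smooth. Each preimage $x=(p,C)$ with $\deg_xG=\ell$ contributes exactly $\ell$ distinct points of $X$ near $x$ mapping to $y'=(q',D')$. Their $\pi$-images are $\ell$ distinct points of $F^{-1}(q')$ (by local injectivity of $\pi$ in \eqref{eq:pi-locally-injective}), unless two preimages from different $x$ share the same point of $\C^2$. To exclude this I use that $D'$ is smooth at $q'$ and $F$ is a local biholomorphism near generic preimages. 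A branch $C'$ at $p'$ with $F_*C'=D'$ is then uniquely the pullback $F^{-1}$-branch, so distinct points of $X$ over $y'$ have distinct images $p'\in F^{-1}(q')$. Since $\#F^{-1}(q')\le d_1d_2$, summing over $x$ gives $\sum\deg_xG\le d_1d_2$. The care needed is in choosing $q'$ off a discrete bad set simultaneously for all finitely many, a priori unknown, preimages. I would handle this by first bounding the number of preimages by the same argument at one generic point, and then perturbing.
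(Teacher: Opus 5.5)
Your proposal is correct. Everything except the fibre bound matches the paper's Lemmas~\ref{lem:atlas}--\ref{lem:tracking}, up to a detail noted at the end: the branch space, the lifted primitive charts, Hausdorffness via Lemma~\ref{lem:separation}, connectedness via $\Psi_{n+1}=\Psi_n\circ f_1$ on $U'$, and the maps $G,\pi,j$.

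For \eqref{eq:valence-multiplicity} you take a genuinely different route. The paper (Lemma~\ref{lem:fibre}) fixes finitely many $x_1,\dots,x_s\in G^{-1}(y)$ and shrinks neighbourhoods $V_i$ so that $\pi|_{V_i}$ is injective and the sets $\pi(V_i)$ are disjoint or meet only in a common base point, using ambient balls and Lemma~\ref{lem:separation}. Then the $\sum_ie_i$ nearby preimages of \emph{any} $y_\tau\ne y$ close to $y$ have distinct projections, and no regularity of $F$ is needed. You instead show that $\pi$ is injective on the whole fibre $G^{-1}(y')$ whenever $\pi(y')$ is a regular value of $F$. In that case $F$ is biholomorphic near each point of $F^{-1}(\pi(y'))$, so a branch is recovered from its image by the local inverse. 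This gives $\#G^{-1}(y')\le d_1d_2$, and the normal form $w\circ G=t^{e}$ transfers it to the weighted bound at $y$.

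Your version yields a cleaner global statement (generic fibres of $G$ inject into fibres of $F$) and avoids branch separation in this step. However, it requires regular values to occur along the chart at $y$, i.e.\ that both coordinates $\iter{f_1}n$ and $\iter{f_2}n\circ h$ of $\Psi_n$ are nonconstant. This is where you use that $h$ is nonconstant, which the paper's argument does not need.

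Your stated worry is slightly misplaced. The set to avoid (critical values of $F$) does not depend on the preimages; what depends on them is how close $y'$ must be to $y$. The clean way to handle that, instead of ``bound at one generic point, then perturb,'' is the paper's: perturb for an arbitrary finite subfamily of $G^{-1}(y)$, obtain $\sum_ie_i\le d_1d_2$, and only then deduce that the fibre is finite.

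Finally, for second countability you should record that each $j_n$ is open (it is $z\mapsto s(z)^\ell$ in branch coordinates). Then the sets $j_n(V)$, with $V$ in a countable base of $U$, form a countable base of $X$.
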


\subsubsection{The underlying set and the coordinate discs}
For $n\ge0$ and $z\in U$, let $C(n,z)$ denote the irreducible image
branch of the germ of $\Psi_n$ at $z$, based at $\Psi_n(z)$.
Its existence follows from Lemma~\ref{lem:primitive}, since
$\iter{f_1}n$ is nonconstant. Define
\begin{equation}\label{eq:surface-set}
 X=\{(a,C):a=\Psi_n(z),\ C=C(n,z),\ n\ge0,\ z\in U\}.
\end{equation}
The equality relation is precisely
\begin{equation}\label{eq:equality-of-points}
 (a,C)=(b,D)
 \quad\Longleftrightarrow\quad a=b\text{ and }C=D\text{ as branch germs}.
\end{equation}
Thus two distinct branches over the same point of $\C^2$ represent
different points of $X$.

Fix a representative $(n,z_0)$ of a point of $X$. On a sufficiently
small source neighbourhood $V\ni z_0$, Lemma~\ref{lem:primitive} gives
\begin{equation}\label{eq:chart-factorization}
 \Psi_n(z)=\nu(\eta(z)),\qquad
 \eta(z_0)=0,\qquad \eta(z)=s(z)^\ell,
 \quad s(z_0)=0,\quad s'(z_0)\ne0.
\end{equation}
The open mapping theorem gives
\begin{equation*}
 \disc\varepsilon\subset\eta(V)
\end{equation*}
for some $\varepsilon>0$. Shrink $\varepsilon$ so that $\nu$ is
injective on a slightly larger disc. For $|t|<\varepsilon$, let
$[\nu]_t$ be the image branch of $\nu$ at its parameter $t$ and set
\begin{equation}\label{eq:lifted-chart}
 \widehat\nu:\disc\varepsilon\longrightarrow X,
 \qquad \widehat\nu(t)=(\nu(t),[\nu]_t).
\end{equation}
This map takes its values in $X$. Indeed, for any such $t$, choose
$z\in V$ with $\eta(z)=t$. Since $\eta$ maps every neighbourhood
of $z$ onto a neighbourhood of $t$, \eqref{eq:chart-factorization}
implies
\begin{equation*}
 \bigl(\Psi_n(z),C(n,z)\bigr)
       =\bigl(\nu(t),[\nu]_t\bigr).
\end{equation*}
The injectivity of $\nu$ implies the injectivity of $\widehat\nu$.

\begin{lemma}[Compatibility of the branch charts]\label{lem:atlas}
The maps \eqref{eq:lifted-chart} define a complex atlas on the set
\eqref{eq:surface-set}. Their transition functions are biholomorphic.
\end{lemma}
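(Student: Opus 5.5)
The plan is to check two things: every point of $X$ lies in the image of some chart, and the transition map between two overlapping charts is biholomorphic. The second reduces to Lemma~\ref{lem:transition} once we know that overlaps occur exactly where the underlying branches coincide.

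\emph{Covering.} By \eqref{eq:surface-set}, each point of $X$ is $(\Psi_n(z_0),C(n,z_0))$ for some representative $(n,z_0)$. The chart \eqref{eq:lifted-chart} built from this representative sends $t=0$ to $(\nu(0),[\nu]_0)=(\Psi_n(z_0),C(n,z_0))$. So the chart images cover $X$, and each chart is injective, as already noted.

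\emph{Overlaps.} Take two charts $\widehat\nu_1:\disc{\varepsilon_1}\to X$ and $\widehat\nu_2:\disc{\varepsilon_2}\to X$, and suppose $\widehat\nu_1(t_1)=\widehat\nu_2(t_2)$. By \eqref{eq:equality-of-points}, $\nu_1(t_1)=\nu_2(t_2)=a$ and $[\nu_1]_{t_1}=[\nu_2]_{t_2}$ as branch germs at $a$.

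The reparametrizations $\tau\mapsto\nu_1(t_1+\tau)$ and $\tau\mapsto\nu_2(t_2+\tau)$ are primitive parametrizations of this common branch: each is injective near $\tau=0$ because $\nu_i$ is injective on its disc. Lemma~\ref{lem:transition} then gives a biholomorphic germ $\theta$ with $\theta(t_1)=t_2$ and $\nu_1=\nu_2\circ\theta$ near $t_1$.

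It remains to show that $\widehat\nu_1=\widehat\nu_2\circ\theta$ near $t_1$, not only at $t_1$. For $t$ near $t_1$ the first components agree, since $\nu_1(t)=\nu_2(\theta(t))$. The germ of $\nu_1$ at $t$ equals the germ of $\nu_2\circ\theta$ at $t$, whose image germ is $[\nu_2]_{\theta(t)}$ because $\theta$ is a local biholomorphism. So the branch components agree as well.

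Consequently the set $O=\widehat\nu_1^{-1}(\widehat\nu_2(\disc{\varepsilon_2}))$ is open in $\disc{\varepsilon_1}$. On $O$, the transition $\widehat\nu_2^{-1}\circ\widehat\nu_1$ coincides near each point with some such $\theta$, so it is holomorphic. The same argument with the roles of the two charts exchanged shows that its inverse is holomorphic too. This also shows that the chart images form a basis for a topology in which the charts are homeomorphisms onto open sets, so the atlas is well defined.

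\emph{Main obstacle.} The step needing care is the openness of $O$: we must know that a point lying in both chart images does so together with a whole neighbourhood. The delicate case is a point where the underlying curve is singular or where several branches pass through $a$. There Lemma~\ref{lem:transition} does the work, because it only uses equality of the branch germs, never equality of point sets. Lemma~\ref{lem:separation} keeps distinct branches through $a$ apart: near $a$, their representatives meet only at $a$ itself, and at that point the two chart images are different points of $X$. Hausdorffness and second countability are not needed for this lemma and can be handled afterwards.
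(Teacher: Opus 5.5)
Your proof is correct and follows essentially the same route as the paper. Equality of points in $X$ forces equality of branch germs, Lemma~\ref{lem:transition} applied at the translated centres $t_1,t_2$ gives a local biholomorphism $\theta$ with $\widehat\nu_1=\widehat\nu_2\circ\theta$ near $t_1$, and so overlaps are open and transitions are holomorphic. The differences are cosmetic: you spell out the covering and the agreement of branch components near $t_1$, you generate the topology from chart images as a basis instead of declaring $W$ open when all its chart preimages are open, and your appeal to Lemma~\ref{lem:separation} is not needed here (the paper uses it only for the Hausdorff property in Lemma~\ref{lem:topology}).
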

\begin{proof}
Suppose two lifted charts have a common point,
\begin{equation*}
 \widehat\nu_1(t_1)=\widehat\nu_2(t_2)=x.
\end{equation*}
By \eqref{eq:equality-of-points}, their image branch germs at $x$
coincide. Translating the parameter centres to $t_1,t_2$ and applying
Lemma~\ref{lem:transition}, there are parameter neighbourhoods
$W_i\ni t_i$ and a biholomorphic map $\theta:W_1\to W_2$ with
\begin{equation}\label{eq:chart-transitions}
 \nu_1=\nu_2\circ\theta,
 \qquad \widehat\nu_1=\widehat\nu_2\circ\theta
 \quad\text{on }W_1.
\end{equation}
Thus every point of a chart intersection has an open neighbourhood
in each parameter disc contained in that intersection. The intersections
are open and the transitions are holomorphic.

For a chart-independent definition of the topology, declare
$W\subset X$ open exactly when
\begin{equation*}
 \widehat\nu^{-1}\bigl(W\cap\widehat\nu(\disc\varepsilon)\bigr)
 \text{ is open in }\disc\varepsilon
\end{equation*}
for every chart. Because the transitions in \eqref{eq:chart-transitions}
are biholomorphic, this topology does not depend on the chart. Each
chart image is open by definition, since for any second chart
$\widehat\rho$ the biholomorphic transitions make
$\widehat\rho^{-1}\bigl(\widehat\nu(\disc\varepsilon)\cap\im\widehat\rho\bigr)$
open, and each $\widehat\nu$ is a homeomorphism onto its image. These maps therefore form a complex atlas.
\end{proof}

\begin{lemma}[Separation and second countability]\label{lem:topology}
The topology in Lemma~\ref{lem:atlas} is Hausdorff and second countable.
For every $n\ge0$, the map
\begin{equation*}
 j_n:U\to X,\qquad j_n(z)=(\Psi_n(z),C(n,z))
\end{equation*}
is holomorphic, locally nonconstant, and open. Moreover,
\begin{equation}\label{eq:surface-cover}
 X=\bigcup_{n\ge0}j_n(U).
\end{equation}
\end{lemma}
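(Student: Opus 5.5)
I would prove the three assertions in the order: holomorphy and openness of $j_n$, then the covering identity \eqref{eq:surface-cover}, then Hausdorffness and second countability deduced from these.

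\textbf{The maps $j_n$.} Fix $n\ge0$ and $z_0\in U$. Take the factorization \eqref{eq:chart-factorization} of $\Psi_n$ near $z_0$ and the lifted chart $\widehat\nu$ of \eqref{eq:lifted-chart}. The computation following \eqref{eq:lifted-chart} shows $j_n(z)=\widehat\nu(\eta(z))$ for $z\in V$. Indeed, $\eta$ is open at every point, so the image branch of $\Psi_n$ at $z$ equals $[\nu]_{\eta(z)}$. In the chart $\widehat\nu$, the map $j_n$ is therefore $\eta=s^\ell$, which is holomorphic, nonconstant near $z_0$, and open. Since holomorphy and openness are local, $j_n$ is holomorphic, locally nonconstant and open on $U$. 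Identity \eqref{eq:surface-cover} is just the definition \eqref{eq:surface-set}.

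\textbf{Second countability.} Choose a countable dense set $\{z_k\}\subset U$. For each $n$ and each $k$, and for each rational radius $r$ small enough that \eqref{eq:chart-factorization} holds on $\disc r(z_k)$, consider the chart $\widehat\nu_{n,k}$ attached to $(n,z_k)$ on a disc containing $\eta(\disc r(z_k))$. These are countably many charts. Their images cover $X$: any point $j_n(z)$ lies in $j_n(\disc r(z_k))$ for some $z_k$ near $z$ and a small rational $r$, provided the factorization at $z_k$ is valid on a disc containing $z$. This needs a uniform lower bound on the admissible radius near $z$. I would obtain it by using the factorization at $z$ itself: $j_n(\disc\rho(z))$ is an open set contained in a single chart image, and points $z_k\in\disc{\rho/2}(z)$ have admissible discs $\disc{\rho/2}(z_k)\ni z$. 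A countable union of chart images, each homeomorphic to a disc, is second countable.

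\textbf{Hausdorff property.} Let $x=(a,C)\ne y=(b,D)$. If $a\ne b$, I use continuity of $\pi:(a,C)\mapsto a$, which is $\nu$ in each chart, and pull back disjoint neighbourhoods of $a$ and $b$ in $\C^2$. If $a=b$, then $C\ne D$, and Lemma~\ref{lem:separation} gives representatives $\nu_C(\disc\varepsilon)$ and $\nu_D(\disc\varepsilon)$ meeting only at $a$. Consider the chart neighbourhoods $\widehat\nu_C(\disc\varepsilon)$ and $\widehat\nu_D(\disc\varepsilon)$. A common point would have first coordinate in the intersection, hence equal to $a$. The only parameter over $a$ is $0$, by injectivity of $\nu_C$ and $\nu_D$. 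So the common point would be $(a,C)=(a,D)$, a contradiction. The two neighbourhoods are therefore disjoint.

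\textbf{Main obstacle.} I expect the delicate step to be the second-countability covering argument, namely the uniformity of chart radii around nearby centres. The fallback is to use the openness of $j_n$ already proved: $X=\bigcup_n j_n(U)$ is a countable union of continuous open images of the second-countable space $U$. Each $j_n(U)$ is then a countable union of the sets $j_n(B)$, with $B$ ranging over rational balls small enough to map into one chart. These sets are open subsets of chart discs, hence second countable, and finitely or countably many of them suffice.
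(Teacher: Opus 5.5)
Your proposal is correct and follows the paper's proof. Holomorphy and openness come from the chart expression $\widehat\nu^{-1}\circ j_n=\eta=s^\ell$. The Hausdorff property uses disjoint ambient balls when $a\ne b$ (your pullback under the continuous $\pi$ is the same thing) and, when $a=b$, Lemma~\ref{lem:separation} together with injectivity of the primitive parametrizations. The identity \eqref{eq:surface-cover} is the definition. For second countability the paper uses your fallback in its most direct form: each $j_n$ is continuous and open, so $\{j_n(V):n\ge0,\ V\in\mathcal V\}$, with $\mathcal V$ a countable base of $U$, is already a base of $X$. This makes the dense-set argument with uniform chart radii unnecessary, which is just as well, since its recentring step would need extra justification.
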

\begin{proof}
Let $x_1=(a_1,C_1)$ and $x_2=(a_2,C_2)$ be distinct points.
If $a_1\ne a_2$, choose disjoint ambient balls $B_i\ni a_i$.
Restrict the lifted charts so that
\begin{equation*}
 \nu_i(\disc{\varepsilon_i})\subset B_i.
\end{equation*}
Their lifted images are disjoint neighbourhoods of $x_1,x_2$.

If $a_1=a_2=a$, then $C_1\ne C_2$. By
Lemma~\ref{lem:separation}, choose the chart representatives so that
\begin{equation*}
 \nu_1(\disc{\varepsilon_1})\cap
 \nu_2(\disc{\varepsilon_2})\subset\{a\}.
\end{equation*}
A common point of the lifted charts would have to project to $a$.
Their only respective points above $a$ are $x_1$ and $x_2$, by
injectivity, and these are not identified by
\eqref{eq:equality-of-points}. The lifted chart images are again
disjoint. This proves the Hausdorff property.

In the coordinates \eqref{eq:chart-factorization}--\eqref{eq:lifted-chart},
\begin{equation*}
 \widehat\nu^{-1}\circ j_n(z)=\eta(z)=s(z)^\ell.
\end{equation*}
Thus $j_n$ is holomorphic and locally nonconstant. Its openness follows
from the open mapping theorem in these coordinates. Equation
\eqref{eq:surface-cover} follows directly from \eqref{eq:surface-set}.

Let $\mathcal V$ be a countable base for the open set $U\subset\C$.
The family
\begin{equation}\label{eq:countable-base}
 \mathcal W=\{j_n(V):n\ge0,\ V\in\mathcal V\}
\end{equation}
is countable and consists of open sets. If $x\in W$ with $W\subset X$
open, write $x=j_n(z)$ and choose
\begin{equation*}
 z\in V\in\mathcal V,\qquad V\subset j_n^{-1}(W).
\end{equation*}
Then
\begin{equation*}
 x\in j_n(V)\subset W.
\end{equation*}
Hence \eqref{eq:countable-base} is a base for $X$.
\end{proof}

\subsubsection{The lifted dynamics and the original local germ}

\begin{lemma}[Projection, connectedness, and local tracking]\label{lem:tracking}
Define
\begin{equation}\label{eq:projection-lift-definition}
 \pi(a,C)=a,\qquad
 G(a,C)=(F(a),F_*C).
\end{equation}
These are well-defined holomorphic maps and satisfy
\begin{equation}\label{eq:jn-dynamics}
 G\circ j_n=j_{n+1},\qquad \pi\circ G=F\circ\pi.
\end{equation}
The map $\pi$ is locally injective. The surface $X$ is connected, and
$j=j_0$ is an open biholomorphic embedding with
\begin{equation}\label{eq:tracking-exact}
 \phi_1\circ j=\id_U,\qquad\phi_2\circ j=h,
 \qquad G\circ j=j\circ f_1\text{ on }U'.
\end{equation}
\end{lemma}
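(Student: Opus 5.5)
We verify the claims of Lemma~\ref{lem:tracking} one at a time, working throughout in the lifted charts \eqref{eq:lifted-chart}.

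\textbf{Well-definedness.} For $\pi$ there is nothing to check. For $G$, we must show that $(F(a),F_*C)$ is again a point of $X$ and depends only on $(a,C)$. By Lemma~\ref{lem:branch-map}, the branch $F_*C$ depends only on the germ $C$; this uses that the first coordinate of $C$ is nonconstant, which holds because $C=C(n,z)$ is the image of $\Psi_n$ and $\iter{f_1}n$ is nonconstant. If $(a,C)=j_n(z)$, then $F\circ\Psi_n=\Psi_{n+1}$. The image branch of $F\circ\Psi_n$ at $z$ is $F_*C(n,z)$, because $\Psi_n=\nu\circ\eta$ with $\eta$ open. Hence
\begin{equation*}
 G(j_n(z))=\bigl(\Psi_{n+1}(z),C(n+1,z)\bigr)=j_{n+1}(z).
\end{equation*}
This gives $G(X)\subset X$ and the first identity in \eqref{eq:jn-dynamics}. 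The second identity, $\pi\circ G=F\circ\pi$, is immediate from the definitions.

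\textbf{Holomorphy and local injectivity.} In a chart, $\pi\circ\widehat\nu=\nu$ is holomorphic and injective on $\disc\varepsilon$, so $\pi$ is holomorphic and locally injective. For $G$, we use Lemma~\ref{lem:branch-map}. The point $\widehat\nu(t)$ has branch $[\nu]_t$, and $F\circ\nu=\nu_*\circ(s^\ell)$ near $t=0$. We should check that $F_*[\nu]_t=[\nu_*]_{s(t)^\ell}$ for all small $t$, not only at the centre; this follows by the same openness argument applied at each parameter. Hence $\widehat\nu_*^{-1}\circ G\circ\widehat\nu(t)=s(t)^\ell$, which is holomorphic and nonconstant. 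Near other points we recentre the charts using Lemma~\ref{lem:transition}.

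\textbf{Connectedness.} Each $j_n(U)$ is connected, since $U$ is connected and $j_n$ is continuous by Lemma~\ref{lem:topology}. By \eqref{eq:surface-cover} it therefore suffices to show that $j_n(U)\cap j_{n+1}(U)\neq\varnothing$ for all $n$; the union of a chain of connected sets, each meeting the next, is connected. Take $z\in U'$, so that $f_1(z)\in U$. On $U'$ the functional equation \eqref{eq:local-functional-equation} gives
\begin{equation*}
 \Psi_{n}(f_1(z))=\bigl(\iter{f_1}{n+1}(z),\iter{f_2}n(h(f_1(z)))\bigr)=\bigl(\iter{f_1}{n+1}(z),\iter{f_2}{n+1}(h(z))\bigr)=\Psi_{n+1}(z).
\end{equation*}
As germs this reads $\Psi_n\circ f_1=\Psi_{n+1}$ near $z$. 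Since $f_1$ is open, the image branches agree, and so
\begin{equation*}
 j_n(f_1(z))=j_{n+1}(z).
\end{equation*}
This gives the required common point, and $X$ is connected.

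\textbf{Properties of $j$.} The projection of $j=j_0$ is $\Psi_0=(\id,h)$, which is injective and has nonvanishing first-coordinate derivative. So $\Psi_0$ is itself a primitive parametrization, with $\ell=1$, near every point of $U$. In these charts $j$ reads as the identity, so $j$ is an open holomorphic injection, that is, a biholomorphic embedding onto an open set. The identities $\phi_1\circ j=\id_U$ and $\phi_2\circ j=h$ are just $\pi\circ j=\Psi_0$. Finally, on $U'$,
\begin{equation*}
 G\circ j=j_1=j_0\circ f_1,
\end{equation*}
by the previous step with $n=0$.

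\textbf{Main obstacle.} The delicate point is the compatibility of $F_*$ with the chart structure at non-central parameters. We need that the branch of $F\circ\nu$ at a point $t$ equals the branch of $\nu_*$ at $s(t)^\ell$, and also the analogous germ identities $\Psi_n\circ f_1=\Psi_{n+1}$ at branch level. Both rest on the same fact: image branches are unchanged under precomposition by open holomorphic maps. We will isolate this fact once and then invoke it in each step.
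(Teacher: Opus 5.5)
Your proposal is correct and follows essentially the same route as the paper. You obtain well-definedness and $G\circ j_n=j_{n+1}$ from Lemma~\ref{lem:branch-map} together with $F\circ\Psi_n=\Psi_{n+1}$, holomorphy via $\pi\circ\widehat\nu=\nu$ and the local form $t\mapsto s(t)^\ell$, connectedness from $j_{n+1}(z)=j_n(f_1(z))$ on $U'$, and the properties of $j$ from $\pi\circ j=\Psi_0$, exactly as the paper does. Your explicit check that $F_*[\nu]_t=[\nu_*]_{s(t)^\ell}$ at non-central parameters, which the paper passes over, is a harmless refinement. So is treating $\Psi_0$ as a primitive parametrization with $\ell=1$, so that $j$ is the identity in charts; the paper instead uses openness from Lemma~\ref{lem:topology} and $j^{-1}=\phi_1|_{j(U)}$.
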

\begin{proof}
By Lemma~\ref{lem:branch-map}, the germ $F_*C$ depends only on $C$.
For $C=C(n,z)$ it equals $C(n+1,z)$, so
\eqref{eq:projection-lift-definition} takes its values in $X$ and
is independent of the representative $(n,z)$. In a lifted chart,
\begin{equation*}
 \pi\circ\widehat\nu=\nu.
\end{equation*}
This proves that $\pi$ is holomorphic and locally injective. With
primitive coordinates in the source and the target, the expression
for $G$ is the holomorphic map \eqref{eq:branch-local-degree}.
Hence $G$ is holomorphic, also at centres projecting to singular
branches. The two identities in \eqref{eq:jn-dynamics} follow from
$F\circ\Psi_n=\Psi_{n+1}$.

Equation \eqref{eq:local-functional-equation} gives, for $z\in U'$,
\begin{align*}
 \Psi_{n+1}(z)
 &=\bigl(\iter{f_1}{n+1}(z),\iter{f_2}{n+1}(h(z))\bigr)\\
 &=\bigl(\iter{f_1}n(f_1(z)),\iter{f_2}n(h(f_1(z)))\bigr)
 =\Psi_n(f_1(z)).\nonumber
\end{align*}
This equality holds on an open neighbourhood of each $z\in U'$.
Since $f_1$ is locally nonconstant and open, it also identifies the
image branch germs. Therefore
\begin{equation}\label{eq:consecutive-jn}
 j_{n+1}(z)=j_n(f_1(z))\qquad(z\in U').
\end{equation}
In particular,
\begin{equation}\label{eq:consecutive-overlap}
 \varnothing\ne j_{n+1}(U')=j_n(f_1(U'))
 \subset j_{n+1}(U)\cap j_n(U).
\end{equation}
Each $j_n(U)$ is connected and open. Their consecutive nonempty
intersections, together with \eqref{eq:surface-cover}, imply that
$X$ is connected.

The first coordinate of $\Psi_0$ is $z$, so $j=j_0$ is injective.
It is open by Lemma~\ref{lem:topology}, and
\begin{equation*}
 j^{-1}=\phi_1|_{j(U)}.
\end{equation*}
This proves that $j$ is biholomorphic onto its open image. The first
two identities in \eqref{eq:tracking-exact} follow from
$\pi j=\Psi_0$. Taking $n=0$ in \eqref{eq:consecutive-jn} gives
the last identity. Finally, $G$ is nonconstant because
$G j=j_1$ and $j_1$ is locally nonconstant.
\end{proof}

\subsubsection{The finite-fibre estimate}

\begin{lemma}[Valence, including local multiplicities]\label{lem:fibre}
For the maps in Lemma~\ref{lem:tracking}, every $y\in X$ satisfies
\eqref{eq:valence-multiplicity}.
\end{lemma}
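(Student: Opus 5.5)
Fix $y=(b,D)\in X$ and write $\pi(y)=b=(b_1,b_2)$. The plan is to compare the multiplicity sum on $X$ with the multiplicity of the product polynomial map $F$ near $b$. Every $x\in G^{-1}(y)$ satisfies $F(\pi(x))=b$, so $\pi(x)$ lies in the finite set $F^{-1}(b)$. This set has at most $d_1d_2$ points, and $F$ has total local multiplicity exactly $d_1d_2$ over $b$. The latter is counted via $\ord_{a_1}(f_1-b_1)\cdot\ord_{a_2}(f_2-b_2)$ summed over $a\in F^{-1}(b)$, since $\sum_{f_1(a_1)=b_1}\ord_{a_1}(f_1-b_1)=d_1$ and similarly for $f_2$. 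It therefore suffices to prove, for each $a=(a_1,a_2)\in F^{-1}(b)$,
\begin{equation*}
 \sum_{x\in G^{-1}(y),\ \pi(x)=a}\deg_xG\;\le\;m_1m_2,
 \qquad m_i=\ord_{a_i}(f_i-b_i).
\end{equation*}
Summing this over $a\in F^{-1}(b)$ gives \eqref{eq:valence-multiplicity}.

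To prove the local inequality, fix $a$ and list the distinct points $x^{(1)},\dots,x^{(r)}$ of $G^{-1}(y)$ over $a$. These are distinct branches $C_k$ at $a$ with $F_*C_k=D$. Choose a primitive parametrization $\nu_*(\tau)=(b_1+\tau^M,B_*(\tau))$ of $D$ as in Lemma~\ref{lem:primitive}, and primitive parametrizations $\nu_k$ of $C_k$. By Lemma~\ref{lem:branch-map}, $F\circ\nu_k(t)=\nu_*(s_k(t)^{\ell_k})$, where $\ell_k=\deg_{x^{(k)}}G$. Take $\tau\ne0$ small and generic. The points of $\bigcup_kC_k$ over $\nu_*(\tau)$, with $t$ small, number exactly $\sum_k\ell_k$. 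They are pairwise distinct: within one branch because $\nu_k$ is injective, and across branches because of Lemma~\ref{lem:separation} together with $\tau\ne0$. Each of them is a solution $p$ near $a$ of $F(p)=\nu_*(\tau)$. Since $F$ is a product map, the equations $f_1(p_1)=b_1+\tau^M$ and $f_2(p_2)=B_*(\tau)$ decouple. The first has at most $m_1$ solutions near $a_1$ and the second at most $m_2$ near $a_2$, so $F^{-1}(\nu_*(\tau))$ has at most $m_1m_2$ points near $a$. This yields $\sum_k\ell_k\le m_1m_2$, which is the local inequality.

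The main obstacle is the bookkeeping in the second step. One must fix neighbourhoods uniformly in $k$ so that the $\ell_k$ preimages in each branch really lie in the small polydisc where the bound $m_1m_2$ on local solutions holds. This is routine by continuity, since all $\nu_k(0)=a$ and the sets $F^{-1}(c)$ cluster only at $F^{-1}(b)$ as $c\to b$. One must also make sure that generic $\tau$ avoids the finitely many coincidences at centres, which is handled by Lemma~\ref{lem:separation} after shrinking. The final remark in the statement, that every fibre has at most $d_1d_2$ points, follows immediately because each $\deg_xG\ge1$.
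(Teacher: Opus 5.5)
Your argument is correct, but it is organized differently from the paper's. You localize at each base point $a\in F^{-1}(b)$. There you prove the sharper bound $\sum_{\pi(x)=a}\deg_xG\le \ord_{a_1}(f_1-b_1)\,\ord_{a_2}(f_2-b_2)$ by a Rouch\'e-type count for each coordinate polynomial in a small polydisc. You then recover $d_1d_2$ from the identity $\sum_{a\in F^{-1}(b)}m_1m_2=d_1d_2$.

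The paper does not localize. It takes finitely many points $x_1,\dots,x_s$ of $G^{-1}(y)$ at once and chooses neighbourhoods $V_i$ on which $\pi$ is injective and whose projections are separated: by disjoint balls for distinct base points, and by Lemma~\ref{lem:separation} for equal ones. It then moves $y$ to a nearby $y_\tau$ using an arbitrary target coordinate with $w\circ G=t_i^{e_i}$. This places $\sum e_i$ distinct points in the single fibre $F^{-1}(\pi(y_\tau))$, which has at most $d_1d_2$ points simply because it is a product of two polynomial fibres.

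The paper's route therefore needs no local multiplicity count for $F$, only this set-theoretic bound. Yours costs a Rouch\'e estimate and the multiplicity identity, but yields the finer per-base-point inequality. Your localization is also avoidable: points over $\nu_*(\tau)$ on branches at different base points are distinct once the representatives lie in disjoint balls. So you could apply $\#F^{-1}(\nu_*(\tau))\le d_1d_2$ directly, which is exactly the paper's argument with $\widehat\nu_*^{-1}$ as the target coordinate.

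One point to tidy: when you ``list the distinct points $x^{(1)},\dots,x^{(r)}$ of $G^{-1}(y)$ over $a$'', you presuppose that this set is finite. Finiteness of the fibre is part of what the lemma asserts. Run the same argument for an arbitrary finite subfamily, as the paper does. The resulting bound $\le m_1m_2$ for every finite subfamily then gives finiteness and the full inequality.
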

\begin{proof}
Set $D=d_1d_2$. For each $a=(a_1,a_2)\in\C^2$,
\begin{equation}\label{eq:ambient-valence}
 F^{-1}(a)=f_1^{-1}(a_1)\times f_2^{-1}(a_2),
 \qquad \#F^{-1}(a)\le D.
\end{equation}
Choose any finite collection of distinct points
\begin{equation}\label{eq:finite-subfibre}
 x_1,\ldots,x_s\in G^{-1}(y),\qquad e_i=\deg_{x_i}G\ge1.
\end{equation}
We prove $\sum_{i=1}^s e_i\le D$.

Write $a_i=\pi(x_i)$. Choose disjoint source neighbourhoods
$V_i\ni x_i$ so small that $\pi|_{V_i}$ is injective. Shrink them
further to arrange
\begin{equation}\label{eq:pairwise-projection-separation}
 \pi(V_i)\cap\pi(V_j)=\varnothing\quad(a_i\ne a_j),
 \qquad
 \pi(V_i)\cap\pi(V_j)\subset\{a_i\}\quad(a_i=a_j,
 i\ne j).
\end{equation}
For unequal base points this follows from disjoint ambient balls.
For equal base points the branch germs are distinct by
\eqref{eq:equality-of-points}, and Lemma~\ref{lem:separation} applies.
There are only finitely many pairs, so all these restrictions can
be imposed simultaneously.

Take a target coordinate $w$ with $w(y)=0$. At $x_i$, a source
coordinate $t_i$ may be chosen so that
\begin{equation}\label{eq:local-fibre-normal-form}
 w\circ G=t_i^{e_i},\qquad t_i(x_i)=0.
\end{equation}
Indeed, in an arbitrary source coordinate $u$,
$w\circ G=u^{e_i}v(u)$ with $v(0)\ne0$, and the coordinate
$t_i=u\,v(u)^{1/e_i}$ gives \eqref{eq:local-fibre-normal-form}.
Choose $r_i>0$ with $t_i^{-1}(\disc{r_i})\subset V_i$.
For a sufficiently small nonzero $\tau$ satisfying
\begin{equation*}
 0<|\tau|<\min_{1\le i\le s}r_i^{e_i},
\end{equation*}
put $y_\tau=w^{-1}(\tau)$. Each equation
$t_i^{e_i}=\tau$ has exactly $e_i$ distinct solutions in
$\disc{r_i}$. Thus there are points
\begin{equation}\label{eq:regular-source-points}
 u_{i,1},\ldots,u_{i,e_i}\in V_i\setminus\{x_i\},
 \qquad G(u_{i,j})=y_\tau.
\end{equation}
Their projections are all distinct. Within one $V_i$ this follows
from the injectivity of $\pi|_{V_i}$. For different $V_i,V_k$ with
equal base point, a coincidence of projections would, by
\eqref{eq:pairwise-projection-separation}, force
\begin{equation*}
 \pi(u_{i,j})=a_i=\pi(x_i)
 \quad\Longrightarrow\quad u_{i,j}=x_i,
\end{equation*}
contrary to \eqref{eq:regular-source-points}. Distinct base points
are already separated in \eqref{eq:pairwise-projection-separation}.

On the other hand,
\begin{equation*}
 F(\pi(u_{i,j}))=\pi(G(u_{i,j}))=\pi(y_\tau).
\end{equation*}
We have produced $\sum_i e_i$ distinct points in the same fibre of
$F$. Equation \eqref{eq:ambient-valence} gives
\begin{equation}\label{eq:finite-multiplicity-bound}
 \sum_{i=1}^s e_i\le D.
\end{equation}
If $G^{-1}(y)$ were infinite, a choice of $D+1$ distinct points in
\eqref{eq:finite-subfibre} would contradict
\eqref{eq:finite-multiplicity-bound}. Thus the fibre is finite.
Taking all its points proves \eqref{eq:valence-multiplicity}.
\end{proof}

\begin{proof}[Completion of Proposition~\ref{prop:globalization}]
Lemmas~\ref{lem:atlas} and \ref{lem:topology} establish the complex
manifold structure and the separation and countability axioms.
Lemma~\ref{lem:tracking} gives connectedness and all identities in
\eqref{eq:global-diagram}--\eqref{eq:global-semiconjugacies}.
Surjectivity onto $\Gamma$ is the equality
\begin{equation*}
 \pi(X)=\bigcup_{n\ge0}\pi(j_n(U))
       =\bigcup_{n\ge0}\Psi_n(U)=\Gamma.
\end{equation*}
Lemma~\ref{lem:fibre} supplies the remaining fibre bound.
\end{proof}

\subsubsection{The retained neutral fixed point}

\begin{corollary}[Marked globalization of a commuting germ]\label{cor:marked}
Let $f(z)=\lambda z+z^2$, $\lambda\ne0$, and $h\in\Cent(f)$.
There exist $X,G,\phi_1,\phi_2,j$ as above and a point $x_*=j(0)$
such that
\begin{align}
 G(x_*)&=x_*,&G'(x_*)&=\lambda,\label{eq:marked-multiplier}\\
 \phi_i\circ G&=f\circ\phi_i,&\phi_i(x_*)&=0,
 &\mathrm d\phi_i(x_*)&\ne0\quad(i=1,2),\label{eq:marked-projections}\\
 \sum_{G(x)=y}\deg_xG&\le4&&\text{for every }y\in X.\label{eq:marked-valence}
\end{align}
The germ of $G$ at $x_*$ is analytically conjugate to $f$, and
\begin{equation}\label{eq:local-h-reconstruction}
 h=\phi_2\circ(\phi_1|_{j(U)})^{-1}
\end{equation}
as a germ at $0$.
\end{corollary}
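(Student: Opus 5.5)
The plan is to apply Proposition~\ref{prop:globalization} with $f_1=f_2=f$, so that $d_1d_2=4$, and to read off the marked point from the tracking identities \eqref{eq:tracking-exact}.

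\textbf{Choice of representative.} Choose a representative of $h$ on a disc $U=\disc r$ on which $h$ is holomorphic and injective (possible since $h'(0)\ne0$). Choose a smaller disc $U'=\disc{r'}$ with $f(U')\subset U$ and $h\circ f=f\circ h$ on $U'$. This is possible because $f(0)=0$ and the germ identity holds near $0$. Since $h$ is nonconstant, the hypotheses \eqref{eq:local-functional-equation} hold, and the proposition yields $X,G,\pi=(\phi_1,\phi_2),j$. The valence bound \eqref{eq:marked-valence} is then exactly \eqref{eq:valence-multiplicity} with $d_1d_2=4$.

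\textbf{The marked point.} Put $x_*=j(0)$. Since $0\in U'$, the identity $G\circ j=j\circ f$ on $U'$ gives $G(x_*)=j(f(0))=j(0)=x_*$. Moreover, $j$ is a biholomorphism from the open set $U'$ onto $j(U')$, and $f(U')\subset U$. Hence, near $x_*$, the germ of $G$ equals $j\circ f\circ j^{-1}$, so it is analytically conjugate to $f$. In particular $G'(x_*)=f'(0)=\lambda$, with the derivative computed in the chart $j^{-1}$; the multiplier is chart-independent at a fixed point. The semiconjugacies $\phi_i\circ G=f\circ\phi_i$ are \eqref{eq:global-semiconjugacies}. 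We also have $\phi_1(x_*)=\phi_1(j(0))=0$ and $\phi_2(x_*)=h(0)=0$, using $\phi_1\circ j=\id_U$ and $\phi_2\circ j=h$. In the chart $j^{-1}$ these read $\phi_1=\id$ and $\phi_2=h$, so $\mathrm d\phi_1(x_*)\ne0$ and $\mathrm d\phi_2(x_*)$ corresponds to $h'(0)\ne0$.

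\textbf{Reconstruction of $h$.} Since $\phi_1|_{j(U)}=j^{-1}$, we get $(\phi_1|_{j(U)})^{-1}=j$, and therefore $\phi_2\circ(\phi_1|_{j(U)})^{-1}=\phi_2\circ j=h$ on $U$. This is \eqref{eq:local-h-reconstruction}.

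\textbf{Main obstacle.} There is no substantial obstacle, since everything is bookkeeping. The only point needing care is that $x_*$ is an interior fixed point with the correct germ. Here one uses that $j$ is an open embedding, so that $j(U')$ is a genuine neighbourhood of $x_*$. This lets the conjugacy $G|_{j(U')}=j\circ f\circ j^{-1}$ be read as an identity of germs on $X$, and not merely along the image of $j$.
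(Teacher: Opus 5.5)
Your proposal is correct and follows essentially the same route as the paper. You apply Proposition~\ref{prop:globalization} with $f_1=f_2=f$, obtain the fixed point, the multiplier and the local conjugacy from $G\circ j=j\circ f$ on $U'$, get the nonvanishing differentials and the reconstruction from $\phi_1\circ j=\id_U$ and $\phi_2\circ j=h$, and take the valence bound from $d_1d_2=4$. Requiring $h$ to be injective on $U$ is harmless but unnecessary, and your explicit check that $\phi_i(x_*)=0$ fills in a step the paper leaves implicit.
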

\begin{proof}
Choose discs $0\in U'\subset U$ on which $h$ is holomorphic,
$h'(0)\ne0$, $f(U')\subset U$, and $hf=fh$ on $U'$.
Apply Proposition~\ref{prop:globalization} with $f_1=f_2=f$.
The exact identity
\begin{equation}\label{eq:local-fixed-conjugacy}
 G\circ j=j\circ f\quad\text{on }U'
\end{equation}
gives
\begin{equation*}
 G(j(0))=j(0),\qquad
 G'(x_*)j'(0)=j'(0)\lambda,
 \qquad j'(0)\ne0.
\end{equation*}
Derivatives here are computed in any local coordinate at $x_*$.
Thus \eqref{eq:marked-multiplier} holds and $j$ conjugates the
marked germs. Furthermore,
\begin{equation}\label{eq:projection-derivatives}
 (\phi_1\circ j)'(0)=1,
 \qquad(\phi_2\circ j)'(0)=h'(0)\ne0.
\end{equation}
This proves local invertibility of both projections and
\eqref{eq:local-h-reconstruction}. Equation \eqref{eq:marked-valence}
is \eqref{eq:valence-multiplicity} with $d_1=d_2=2$.
\end{proof}

\begin{remark}\label{rem:no-global-assumptions}
The proof uses neither a nested family $\Psi_n(U)$ nor a closed
analytic structure on $\Gamma$ in the ambient topology. Connectedness
comes from \eqref{eq:consecutive-overlap}, and the complex structure
comes from primitive branch coordinates. At this stage only $G$ has
finite valence, while the projections $\phi_i$ are not yet known to
have finite valence or to be proper.
\end{remark}

\begin{remark}[Negative iterates and the projection pair]\label{rem:negative-iterates}
For the germ $h=f^{-1}$, the canonical globalized curve is
\begin{equation*}
 \Gamma=\{(f(w),w):w\in\C\},
\end{equation*}
with model
\begin{equation*}
 X=\C,\qquad G=f,\qquad\phi_1=f,\qquad\phi_2=\id.
\end{equation*}
The first projection is not globally injective, while the pair
$(\phi_1,\phi_2)$ is globally injective. At $0$, the inverse branch
of $\phi_1$ fixing $0$ recovers $h=f^{-1}$.
More generally, for $\ell\ge1$ the model
\begin{equation*}
 G=f,\qquad\phi_1=f^{\circ\ell},\qquad\phi_2=\id
\end{equation*}
satisfies the semiconjugacy equations and reconstructs
$f^{\circ(-\ell)}$. Thus the use of a locally invertible projection,
rather than a globally invertible function on the original plane,
is necessary for the negative iterates in the centraliser.
\end{remark}

\section{Uniformization: the lifted surface is the plane}\label{sec:unif-plane}

\subsection{Uniformization at a nonlinearizable fixed point}\label{sec:uniformization}

A connected Riemann surface is called hyperbolic if its universal
cover is the unit disc. The following elementary consequence of
Schwarz's lemma is the fixed-point rigidity on which the proof
rests, and the derivative of a self-map at a fixed point is
independent of the chosen local coordinate. Covering-space and
uniformization facts are used in the sense of \cite{Forster},
whereas the Schwarz and Picard arguments are classical complex
analysis~\cite{Ahlfors}. The local conjugacy in
Lemma~\ref{lem:schwarz} is proved explicitly, since it is the link
between the neutral germ and the global surface.

\begin{lemma}[Schwarz equality on a hyperbolic surface]\label{lem:schwarz}
Let $Y$ be a connected hyperbolic Riemann surface and $T:Y\to Y$
a holomorphic map. Suppose
\begin{equation*}
 T(y_*)=y_*,\qquad T'(y_*)=\lambda,\qquad |\lambda|=1.
\end{equation*}
Then the germ of $T$ at $y_*$ is analytically conjugate to $w\mapsto\lambda w$.
\end{lemma}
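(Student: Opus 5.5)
The plan is to lift $T$ to the universal cover and use the equality case of Schwarz's lemma there, then push the resulting conjugacy back down. Let $p:\D\to Y$ be a universal covering with $p(0)=y_*$. Since $\D$ is simply connected, the map $T\circ p:\D\to Y$ lifts through $p$. We choose the lift $\widetilde T:\D\to\D$ normalized by $\widetilde T(0)=0$; this is possible because $T(y_*)=y_*=p(0)$. Then $p\circ\widetilde T=T\circ p$.

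The covering $p$ is a local biholomorphism near $0$. In the local coordinate at $y_*$ given by $p$ itself, we therefore get $\widetilde T'(0)=T'(y_*)=\lambda$, so $|\widetilde T'(0)|=1$. Schwarz's lemma, in its equality case, applied to the holomorphic self-map $\widetilde T$ of $\D$ fixing $0$, forces $\widetilde T(w)=\lambda w$ on all of $\D$.

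Now choose a disc $\disc r$ on which $p$ is injective. Put $V=p(\disc r)$, an open neighbourhood of $y_*$, and let $\psi=(p|_{\disc r})^{-1}:V\to\disc r$. This is a biholomorphism with $\psi(y_*)=0$. Because $|\lambda|=1$, the rotation $R_\lambda$ maps $\disc r$ into itself. Hence for $y\in V$ with $T(y)\in V$, and in fact for all $y\in V$, we have
$$T(y)=T(p(\psi(y)))=p(\widetilde T(\psi(y)))=p(\lambda\psi(y)).$$
The point $\lambda\psi(y)$ lies in $\disc r$, so $T(y)\in V$ and $\psi(T(y))=\lambda\psi(y)$. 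Thus $\psi\circ T\circ\psi^{-1}=R_\lambda$ on $\disc r$, which is the required analytic conjugacy of germs. In fact the argument shows that $V$ is $T$-invariant.

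The only delicate points are the existence of the based lift and the identification of $\widetilde T'(0)$ with $T'(y_*)$. Both follow from the lifting criterion for the simply connected space $\D$, as in \cite{Forster}, and from the fact that $p$ is a local chart at $0$. Beyond that, the argument is the classical Schwarz equality case \cite{Ahlfors}, and I expect no real obstacle.
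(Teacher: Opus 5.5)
Your proof is correct and follows the paper's argument essentially step for step. You lift $T$ through the universal covering to $\widetilde T:\D\to\D$ with $\widetilde T(0)=0$ and $\widetilde T'(0)=\lambda$, use the equality case of Schwarz's lemma to get $\widetilde T(w)=\lambda w$, and then take the local inverse of $p$ on a small disc (mapped into itself by $R_\lambda$ since $|\lambda|=1$) as the linearizing coordinate. The only cosmetic difference is that you cite the Schwarz equality case, whereas the paper proves it inline by applying the maximum modulus principle to $\widetilde T(w)/w$.
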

\begin{proof}
Write $\D=\{z\in\C:|z|<1\}$ for the unit disc. Take a universal
covering
\begin{equation*}
 p:\D\longrightarrow Y,\qquad p(0)=y_*.
\end{equation*}
Since $\D$ is simply connected, $T\circ p$ has a lift
$\widetilde T:\D\to\D$ with $\widetilde T(0)=0$. The lift is
holomorphic because $p$ is locally biholomorphic, and
\begin{equation}\label{eq:covering-lift-equation}
 p\circ\widetilde T=T\circ p.
\end{equation}
Differentiating in a coordinate at $y_*$ gives
\begin{equation*}
 p'(0)\widetilde T'(0)=T'(y_*)p'(0),
 \qquad p'(0)\ne0,
 \qquad \widetilde T'(0)=\lambda.
\end{equation*}
Schwarz's lemma implies $|\widetilde T(w)|\le|w|$. Therefore
\begin{equation*}
 A(w)=\begin{cases}\widetilde T(w)/w,&w\ne0,\\
                       \widetilde T'(0),&w=0,
       \end{cases}
 \qquad |A(w)|\le1,\quad |A(0)|=1.
\end{equation*}
The maximum modulus principle yields $A\equiv\lambda$, hence
\begin{equation}\label{eq:linear-lift}
 \widetilde T(w)=\lambda w\quad(w\in\D).
\end{equation}
Choose $r>0$ so small that $p|_{\disc r}$ is biholomorphic onto its
image. By \eqref{eq:covering-lift-equation} and
\eqref{eq:linear-lift},
\begin{equation*}
 T(p(w))=p(\lambda w)\qquad(|w|<r).
\end{equation*}
Since $|\lambda|=1$, both sides are defined on the same disc.
The local inverse of $p$ is the required linearizing coordinate.
\end{proof}

\begin{lemma}[Finite valence on $\C$ and $\C^*$]\label{lem:finite-valence}
A nonconstant entire function whose fibres have uniformly bounded
cardinality is a polynomial. If a nonconstant holomorphic map
$T:\C^*\to\C^*$ has uniformly bounded fibre cardinality, then
\begin{equation}\label{eq:punctured-monomial}
 T(z)=az^m,\qquad a\in\C^*,\quad m\in\Z\setminus\{0\}.
\end{equation}
At every fixed point $z_*\in\C^*$ of this map,
\begin{equation*}
 T'(z_*)=m.
\end{equation*}
\end{lemma}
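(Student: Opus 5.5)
For the entire case I will use Picard's great theorem at infinity. Let $g$ be a nonconstant entire function with $\#g^{-1}(w)\le N$ for every $w$. If $g$ is not a polynomial, then $\zeta\mapsto g(1/\zeta)$ has an essential singularity at $\zeta=0$. By the great Picard theorem, on every punctured neighbourhood of $0$ it takes every complex value, with at most one exception, infinitely often. Choose one non-exceptional value $w$; it then has infinitely many preimages, contradicting $\#g^{-1}(w)\le N$. Hence $g$ is a polynomial. The bound is not otherwise needed, except that the degree is then at most $N$.

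For the punctured plane, I first lift $T$ through the exponential covering $\exp:\C\to\C^*$. Since $T$ omits $0$, the function $T\circ\exp$ is entire and zero-free, so $T(e^\zeta)=e^{g(\zeta)}$ for an entire $g$. Moreover $g(\zeta+2\pi i)-g(\zeta)\in2\pi i\Z$ is continuous, hence constant, say $2\pi i m$. Then $k(\zeta)=g(\zeta)-m\zeta$ is $2\pi i$-periodic and descends to a holomorphic function $\kappa$ on $\C^*$. This gives $T(z)=z^m e^{\kappa(z)}$.

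The key step is to show that $\kappa$ is constant, i.e.\ that $T$ has no essential singularity at $0$ or $\infty$. At each puncture, $T$ extends to the Riemann sphere either holomorphically or with an essential singularity. I will argue as before: if $T$ had an essential singularity at $\infty$, say, then by great Picard every value except at most two (here $0$ and $\infty$ are already omitted, and there is at most one further exceptional value) would be attained infinitely often near $\infty$. This contradicts the uniform fibre bound. The same argument applies at $0$. Hence $T$ extends to a rational map $\widehat{\C}\to\widehat{\C}$ with $T^{-1}(\{0,\infty\})\subset\{0,\infty\}$. Since a rational function has a zero or pole at each point mapped to $0$ or $\infty$, $T$ has zeros and poles only at $0,\infty$, so $T(z)=az^m$. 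Nonconstancy forces $m\ne0$, and $a\ne0$ because $T$ maps into $\C^*$. I expect this Picard bookkeeping at the two punctures, with its count of exceptional values, to be the only point needing care; working on the lift $g$ instead would be messier.

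Finally, at a fixed point $z_*\in\C^*$ we have $az_*^{m}=z_*$, i.e.\ $az_*^{m-1}=1$. Then $T'(z_*)=maz_*^{m-1}=m$, which is the claimed multiplier. This is the form in which the lemma will later exclude the $\C^*$ case, since an integer multiplier cannot equal $\lambda$ with $|\lambda|=1$ and $\lambda$ not a root of unity.
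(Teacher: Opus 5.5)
Your proof is correct and follows the paper's route: the great Picard theorem excludes an essential singularity at $\infty$ in the entire case and at $0$ and $\infty$ in the punctured case, so $T$ is rational with zeros and poles only in $\{0,\infty\}$, hence equal to $az^m$; the multiplier computation is the same as the paper's. The exponential lift giving $T(z)=z^me^{\kappa(z)}$ is never used and can be deleted. Your count of Picard exceptions is slightly muddled, since $T$ omits $0$ and therefore takes every nonzero value infinitely often near an essential singularity. This is harmless, because a single value taken infinitely often already contradicts the fibre bound.
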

\begin{proof}
Let $T$ first be entire. If its singularity at infinity were essential,
the great Picard theorem would give a finite value $b$ that is taken
infinitely often in every neighbourhood of infinity. Thus
\begin{equation*}
 \#T^{-1}(b)=\infty,
\end{equation*}
which contradicts finite valence, so the singularity at infinity is
removable or a pole. Removability would make $T$ bounded outside a
large disc, hence bounded on $\C$ and constant by Liouville's theorem.
Infinity is therefore a pole, and $T$ is a polynomial.

For $T:\C^*\to\C^*$, an essential singularity at $0$ or at infinity
would, again by the great Picard theorem, contradict finite valence.
Each isolated singularity is therefore removable or a pole, so $T$
extends to a rational function on the Riemann sphere
$\Chat=\C\cup\{\infty\}$. Since $T$ has no zeros or
poles on $\C^*$, all of its zeros and poles lie in $\{0,\infty\}$.
Let $m=\ord_0T$. Then $z^{-m}T(z)$ has no zeros or poles anywhere
on the sphere, so it is a nonzero constant. This proves
\eqref{eq:punctured-monomial}, in which the case $m=0$ is excluded
because it would make $T$ constant. If $az_*^m=z_*$, then
\begin{equation*}
 T'(z_*)=ma z_*^{m-1}
         =m\frac{az_*^m}{z_*}=m.
\end{equation*}
\end{proof}

\begin{proposition}[The lifted surface is the plane]\label{prop:plane}
Assume that $f(z)=\lambda z+z^2$ is nonlinearizable at $0$, where
$\lambda=e^{2\pi i\alpha}$ and $\alpha$ is irrational. For the
marked globalization in Corollary~\ref{cor:marked},
\begin{equation}\label{eq:plane-conclusion}
 X\simeq\C.
\end{equation}
In a global coordinate on $X$, both $\phi_i$ are entire and $G$ is
a polynomial satisfying
\begin{equation}\label{eq:preliminary-degree}
 2\le\deg G\le4.
\end{equation}
\end{proposition}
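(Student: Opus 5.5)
The plan is to run through the uniformization trichotomy for the connected surface $X$ of Corollary~\ref{cor:marked}. Its universal cover is $\D$, $\C$, or $\Chat$, and we exclude everything except $X\simeq\C$.

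\textbf{Hyperbolic case.} If $X$ is hyperbolic, apply Lemma~\ref{lem:schwarz} with $Y=X$, $T=G$ and $y_*=x_*$. This is allowed because $|\lambda|=1$ by \eqref{eq:marked-multiplier}. The germ of $G$ at $x_*$ is then linearizable. By Corollary~\ref{cor:marked}, that germ is analytically conjugate to $f$, so $f$ would be linearizable, contrary to hypothesis.

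\textbf{Nonhyperbolic, not simply connected.} Here the universal cover is $\C$ or $\Chat$. Since $X$ is noncompact or compact, the possibilities are:
\begin{itemize}
\item $X\simeq\Chat$. This is impossible, because $\phi_1$ would be a holomorphic function on a compact surface, hence constant, contradicting $\mathrm d\phi_1(x_*)\ne0$.
\item $X$ a torus. Excluded the same way.
\item $X\simeq\C^*$.
\item $X\simeq\C$.
\end{itemize}
For $X\simeq\C^*$, the map $G:\C^*\to\C^*$ is nonconstant with fibres of cardinality at most $4$ by \eqref{eq:marked-valence}. Lemma~\ref{lem:finite-valence} gives $G(z)=az^m$, so at the fixed point $x_*\in\C^*$ we get $\lambda=G'(x_*)=m\in\Z$. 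This contradicts $|\lambda|=1$ with $\alpha$ irrational, since $\lambda\ne\pm1$.

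Note that the multiplier is coordinate independent at a fixed point, so this computation is legitimate in the global coordinate. Hence $X\simeq\C$.

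\textbf{Conclusions in the plane.} In a global coordinate on $\C$, the $\phi_i$ are holomorphic functions on $\C$, hence entire. The map $G$ is entire, nonconstant, and of finite valence at most $4$, so Lemma~\ref{lem:finite-valence} makes it a polynomial. Counting with multiplicity, a polynomial of degree $d$ takes every value exactly $d$ times, so \eqref{eq:marked-valence} gives $\deg G\le4$.

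It remains to exclude $\deg G=1$, which I expect to be the main obstacle. An affine $G$ with a fixed point of multiplier $\lambda$, $|\lambda|=1$, $\lambda\ne1$, is conjugate to the rotation $w\mapsto\lambda w$ globally. Its germ at $x_*$ would therefore be linearizable, and again by Corollary~\ref{cor:marked} so would $f$, a contradiction. Degree $0$ is excluded because $G$ is nonconstant. This yields \eqref{eq:preliminary-degree}.
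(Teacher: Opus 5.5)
Your proof is correct and follows the paper's argument step for step. It excludes the hyperbolic case by Schwarz equality, excludes $\Chat$ and tori because $\phi_1$ is nonconstant, uses $G(z)=az^m$ on $\C^*$ to force $\lambda=m\in\{\pm1\}$, and then gets $2\le\deg G\le4$ from finite valence and the linearizability of an affine lift. The only blemish is the heading ``not simply connected'', since that case list also contains $\C$; the paper instead justifies the four-way list directly by noting that nonidentity deck transformations have no fixed points.
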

\begin{proof}
If $X$ were hyperbolic, Lemma~\ref{lem:schwarz} applied at $x_*$
would linearize $G$, and composing that local coordinate with $j$ as
in \eqref{eq:local-fixed-conjugacy} would linearize $f$, which the
hypothesis excludes. Hence $X$ is nonhyperbolic.

By uniformization and covering-space theory~\cite{Forster}, the
remaining possibilities are
\begin{equation}\label{eq:surface-options}
 X\simeq\Chat,\qquad \C/\Lambda,\qquad \C,\qquad\C^*,
\end{equation}
where $\Lambda\subset\C$ is a lattice of rank two. To make the
quotient alternatives explicit, suppose first that the universal
cover is $\C$. Its deck transformations are affine maps
$z\mapsto az+b$, and a nonidentity deck transformation has no
fixed point. Thus
\begin{equation*}
 a\ne1\ \Longrightarrow\ z=\frac{b}{1-a}\text{ is fixed}
 \quad\Longrightarrow\quad a=1.
\end{equation*}
The deck group is therefore a discrete additive subgroup of $\C$ of
rank $0$, $1$, or $2$. These ranks give respectively $\C$, $\C^*$
(via an exponential coordinate), or $\C/\Lambda$. If instead the
universal cover is the sphere, then every nonidentity M\"obius map
has a fixed point, so the deck group is trivial and $X\simeq\Chat$.
This gives exactly \eqref{eq:surface-options}.

The compact alternatives $\Chat$ and $\C/\Lambda$ are impossible.
Indeed, $\phi_1:X\to\C$ is a nonconstant holomorphic function by
\eqref{eq:projection-derivatives}, and on a compact surface the
maximum modulus principle would force
\begin{equation*}
 \phi_1\equiv\text{constant},
\end{equation*}
a contradiction. It is essential here that $\phi_1$ takes values in
$\C$ rather than merely in the sphere.

Suppose next that $X\simeq\C^*$. The fibre bound
\eqref{eq:marked-valence} together with Lemma~\ref{lem:finite-valence}
then gives $G(z)=az^m$, and the retained fixed point gives
\begin{equation*}
 \lambda=G'(x_*)=m\in\Z\setminus\{0\}.
\end{equation*}
Since $|\lambda|=1$, the multiplier $m$ can only be $1$ or $-1$,
both of which are roots of unity, whereas
$\lambda=e^{2\pi i\alpha}$ with $\alpha$ irrational is not. Thus
$X\not\simeq\C^*$, and \eqref{eq:plane-conclusion} follows.

In the plane coordinate, $G$ and $\phi_i$ are entire. The map $G$
is nonconstant and has finite valence, so it is polynomial by
Lemma~\ref{lem:finite-valence}. If $d=\deg G$, then the fundamental
theorem of algebra, counted with local multiplicities, gives
\begin{equation*}
 d=\sum_{G(x)=y}\deg_xG\le4.
\end{equation*}
If $d=1$, then the marked fixed point and its multiplier force
\begin{equation*}
 G(w)=x_*+\lambda(w-x_*).
\end{equation*}
The coordinate $z\mapsto j(z)-x_*$ would then linearize $f$ by
\eqref{eq:local-fixed-conjugacy}, again a contradiction. Therefore
$d\ge2$, which proves \eqref{eq:preliminary-degree}.
\end{proof}

\begin{remark}[The arithmetic input is qualitative]\label{rem:arithmetic-qualitative}
The proof uses the Brjuno condition only through \eqref{eq:BY}, so
it places no lower bound on the individual continued-fraction
contributions. The implication in Theorem~\ref{thm:main} therefore
applies equally when
\begin{equation*}
 \sum_{n\ge0}\frac{\log q_{n+1}}{q_n}=\infty,
 \qquad \frac{\log q_{n+1}}{q_n}\longrightarrow0.
\end{equation*}
No step requires a special subsequence of exceptionally close
rational approximations. The arithmetic statement is obtained by
combining a qualitative nonlinearizability criterion with the
analytic-geometric argument.
\end{remark}

\begin{remark}[The role of nonlinearizability]\label{rem:nonlinearizability}
At a linearizable neutral fixed point, the conclusion of
Lemma~\ref{lem:schwarz} is compatible with the germ and does not
exclude a hyperbolic surface, and a plane lift of degree one would
not contradict linearizability either. The two exclusions
\begin{equation*}
 X\text{ hyperbolic}\Longrightarrow f\text{ linearizable},
 \qquad
 \deg G=1\Longrightarrow f\text{ linearizable}
\end{equation*}
are precisely where the hypothesis in Theorem~\ref{thm:main}
is used. Once these possibilities are excluded, the two quadratic
semiconjugacy equations force polynomial projections, which
separates the geometric obstruction from the established algebraic
classification of global polynomial commutants.
\end{remark}

\section{Polynomial rigidity of the commuting graph}\label{sec:rigidity}

\subsection{A growth estimate for the entire projections}\label{sec:growth}

The conclusion that $G$ is polynomial does not by itself imply
that the entire functions $\phi_i$ are polynomial. The general
obstruction to transcendental semiconjugacies, due to
Eremenko~\cite[Theorem~2]{Eremenko}, is used for projection rigidity
in Inou~\cite[Section~3.2]{Inou} and is restated in
Ji--Xie~\cite[Theorem~2.3]{JX}. Rather than importing its general
orbifold classification, we prove the precise quadratic
specialization needed here by Rouch\'e's theorem and Cauchy's
estimates~\cite{Ahlfors}, and no finite-valence hypothesis on
$\phi_i$ is required.

\begin{lemma}[Entire semiconjugacies to a quadratic polynomial]\label{lem:growth}
Let $G\in\C[z]$ have degree $d\ge2$ and let $\phi$ be a
nonconstant entire function. Suppose $|\lambda|=1$ and
\begin{equation}\label{eq:entire-semiconjugacy}
 \phi(G(z))=\phi(z)^2+\lambda\phi(z)\qquad(z\in\C).
\end{equation}
Then $\phi$ is a polynomial and $d=2$.
\end{lemma}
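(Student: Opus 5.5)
We compare the growth of $\phi$ along the two sides of \eqref{eq:entire-semiconjugacy} using a modified maximum modulus. Put
\begin{equation*}
 M(r)=\max_{|z|=r}|\phi(z)|,\qquad H(r)=M(r)+2 .
\end{equation*}
Write $G(z)=g_dz^d+\dots$ and fix $c>0$ and $r_0$ so that $G(\{|z|\le r\})\supset\{|w|\le c r^d\}$ for $r\ge r_0$. This covering holds by Rouch\'e's theorem: on $|z|=r$, $|G(z)|\ge |g_d|r^d/2>cr^d$, so $G-w$ has $d$ zeros in $|z|<r$ for each $|w|\le cr^d$.

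\textbf{Step 1: an upper growth bound.} The right side of \eqref{eq:entire-semiconjugacy} is bounded by $M(r)^2+M(r)\le H(r)^2$, since $|\lambda|=1$. Combined with the covering, this gives
\begin{equation*}
 H(cr^d)\le H(r)^2 ,
\end{equation*}
where the additive constant $2$ absorbs the $+2$ on the left. The inequality is used in the direction $H(R)\le H(\rho(R))^2$ with $\rho(R)=(R/c)^{1/d}$. Iterating $k$ times, until $\rho^{\circ k}(R)$ falls into $[r_0,r_1]$, takes about $k\approx\log\log R/\log d$ steps. This yields
\begin{equation*}
 \log H(R)\le 2^k\log H(r_1)\le C(\log R)^{\log2/\log d}.
\end{equation*}
For $d\ge2$ the exponent is at most $1$, so $H(R)\le CR^K$. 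By Cauchy's estimates $\phi$ is a polynomial of some degree $p\ge1$.

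\textbf{Step 2: the degree.} Once $\phi$ is a polynomial, comparing degrees in \eqref{eq:entire-semiconjugacy} gives $pd=2p$, hence $d=2$. This step is immediate.

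\textbf{Main obstacle.} The delicate step is the bookkeeping in the iteration. One must check that the constant $c$ and the shift $+2$ do not spoil the count, in particular that $\log H\ge\log2>0$ so that squaring is monotone. One must also check that the number of backward steps really is $O(\log\log R)$, uniformly in $R$. For $d=2$ one gets exactly $2^k\asymp\log R$, which is the borderline giving polynomial growth. For $d>2$ the growth is even smaller than any power, which forces $\phi$ constant, contradicting the hypothesis, so such $d$ cannot occur. I would write $\rho^{\circ k}(R)=c^{-(1+1/d+\dots)}R^{d^{-k}}$ explicitly, bound the geometric prefactor by a constant, and choose $k$ minimal with $R^{d^{-k}}\le r_1$. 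Here $r_1$ is a fixed multiple of $r_0$, chosen large enough that the whole backward orbit stays above $r_0$.
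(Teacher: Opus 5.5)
Your proof is correct and follows the paper's argument: the Rouch\'e covering of a large disc by $G$, the inequality $H(\text{covered radius})\le H(r)^2$ for a shifted maximum modulus, iteration to a polynomial growth bound, Cauchy's estimates, and the degree comparison $pd=2p$. The only differences are cosmetic. The paper weakens $cr^d$ to $cr^2$ and iterates forward along $r_{n+1}=cr_n^2$, interpolating by monotonicity. You keep the exponent $d$ and iterate backward from each $R$, which also shows directly that $d>2$ would force subpolynomial growth.
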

\begin{proof}
Write
\begin{equation*}
 G(z)=a_dz^d+\sum_{j=0}^{d-1}a_jz^j,\qquad a_d\ne0.
\end{equation*}
Choose
\begin{equation*}
 0<c\le\min\{1,|a_d|/4\},\qquad R\ge1,
 \qquad \sum_{j=0}^{d-1}|a_j|r^j\le\frac{|a_d|}{4}r^d
 \quad(r\ge R).
\end{equation*}
For $|w|\le cr^d$ and $|z|=r$, we have
\begin{align*}
 |G(z)-w-a_dz^d|
 &\le\sum_{j=0}^{d-1}|a_j|r^j+|w|\\
 &\le\frac{|a_d|}{4}r^d+cr^d
 \le\frac{|a_d|}{2}r^d<|a_dz^d|.\nonumber
\end{align*}
Rouch\'e's theorem implies that $G(z)-w$ has $d$ zeros in
$\disc r$, counted with multiplicities. Therefore
\begin{equation}\label{eq:large-disc-image}
 \overline{\disc{cr^d}}\subset G(\disc r),
 \qquad \overline{\disc{cr^2}}\subset G(\disc r)
 \quad(r\ge R),
\end{equation}
where the second inclusion uses $r\ge1$ and $d\ge2$.

Define
\begin{equation*}
 M(r)=\max_{|z|\le r}|\phi(z)|,
 \qquad A(r)=\max\{1,M(r)\},\qquad H(r)=2A(r).
\end{equation*}
Given $|w|\le cr^2$, choose $z\in\disc r$ with $G(z)=w$,
using \eqref{eq:large-disc-image}. Then
\begin{equation*}
 |\phi(w)|=|\phi(z)^2+\lambda\phi(z)|
 \le M(r)^2+M(r)\le2A(r)^2.
\end{equation*}
Since $2A(r)^2\ge1$, this yields
\begin{equation*}
 H(cr^2)=2\max\{1,M(cr^2)\}
 \le4A(r)^2=H(r)^2\qquad(r\ge R).
\end{equation*}

Choose $r_0\ge R$ with $t=cr_0>1$ and set
\begin{equation*}
 r_{n+1}=cr_n^2\qquad(n\ge0).
\end{equation*}
By induction,
\begin{equation}\label{eq:radii-explicit}
 r_n=c^{-1}t^{2^n},\qquad r_n\nearrow\infty,
 \qquad \log H(r_n)\le2^n\log H(r_0).
\end{equation}
For any $r\ge r_0$, choose $n$ with $r_n\le r<r_{n+1}$.
Monotonicity of $H$ and \eqref{eq:radii-explicit} give
\begin{align*}
 \log H(r)&\le\log H(r_{n+1})
            \le2^{n+1}\log H(r_0),\\
 \log(cr)&\ge\log(cr_n)=2^n\log t.\nonumber
\end{align*}
With
\begin{equation*}
 K=\frac{2\log H(r_0)}{\log t}>0,
\end{equation*}
we obtain for every $r\ge r_0$,
\begin{equation}\label{eq:power-bound}
 M(r)\le H(r)\le\exp(K\log(cr))=(cr)^K.
\end{equation}
In particular, the estimate holds for every sufficiently large
radius, not just the sequence $r_n$.

Write $\phi(z)=\sum_{j\ge0}b_jz^j$. For each integer $j>K$,
Cauchy's coefficient estimate and \eqref{eq:power-bound} imply
\begin{equation*}
 |b_j|\le\frac{M(r)}{r^j}\le c^K r^{K-j}
 \xrightarrow[r\to\infty]{}0.
\end{equation*}
Hence $b_j=0$ for all integers $j>K$, and $\phi$ is polynomial.
If $m=\deg\phi\ge1$, comparison of degrees in
\eqref{eq:entire-semiconjugacy} gives
\begin{equation*}
 md=\deg(\phi\circ G)
    =\deg(\phi^2+\lambda\phi)=2m.
\end{equation*}
Thus $d=2$.
\end{proof}

\begin{remark}
The hypothesis $|\lambda|=1$ only fixes the constant in the estimate
$|\phi(z)^2+\lambda\phi(z)|\le(1+|\lambda|)A(r)^2$, so the same
iteration proves polynomial growth for any complex multiplier
$\lambda$.
\end{remark}

\begin{proposition}[Polynomial realization of the commuting graph]\label{prop:polynomial-realization}
Under the assumptions of Theorem~\ref{thm:main}, the marked
globalization admits a plane coordinate in which
\begin{equation}\label{eq:polynomial-realization}
 G,\phi_1,\phi_2\in\C[z],\qquad
 \deg G=2,\qquad\deg\phi_i\ge1.
\end{equation}
The identities and the nonzero derivatives in
\eqref{eq:marked-multiplier}--\eqref{eq:marked-projections} remain valid.
\end{proposition}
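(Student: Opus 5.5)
The plan is to combine Corollary~\ref{cor:marked}, Proposition~\ref{prop:plane} and Lemma~\ref{lem:growth}, transporting everything through the global coordinate on $X$. Under the hypotheses of Theorem~\ref{thm:main}, $f$ is nonlinearizable and $\lambda=e^{2\pi i\alpha}$ with $\alpha$ irrational, so Proposition~\ref{prop:plane} applies to the marked globalization of Corollary~\ref{cor:marked}. Fix a biholomorphism $\kappa:X\to\C$ and, by abuse of notation, replace $G$, $\phi_1$, $\phi_2$ by $\kappa\circ G\circ\kappa^{-1}$, $\phi_1\circ\kappa^{-1}$, $\phi_2\circ\kappa^{-1}$, and $x_*$ by $\kappa(x_*)$. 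Proposition~\ref{prop:plane} then says that $G$ is a polynomial with $2\le\deg G\le4$ and that the $\phi_i$ are entire.

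Next I would check that the marked data survive this change of coordinates. The semiconjugacies $\phi_i\circ G=f\circ\phi_i$ are identities of maps and are unchanged under conjugation by $\kappa$. The fixed point $G(x_*)=x_*$ is preserved. The multiplier $G'(x_*)=\lambda$ and the nonvanishing of $\mathrm d\phi_i(x_*)$ are coordinate-independent statements, since the derivative of a self-map at a fixed point and the nonvanishing of a differential do not depend on the chart. The reconstruction \eqref{eq:local-h-reconstruction} persists once $j$ is replaced by $\kappa\circ j$. Each $\phi_i$ is nonconstant because $\mathrm d\phi_i(x_*)\ne0$.

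Now apply Lemma~\ref{lem:growth} to each $\phi=\phi_i$ with this polynomial $G$ of degree $d\ge2$. Since $f(z)=z^2+\lambda z$, the semiconjugacy reads $\phi_i(G(z))=\phi_i(z)^2+\lambda\phi_i(z)$ on all of $\C$, which is exactly \eqref{eq:entire-semiconjugacy}, and $|\lambda|=1$. The lemma gives $\phi_i\in\C[z]$ and $d=2$. Nonconstancy gives $\deg\phi_i\ge1$, which completes \eqref{eq:polynomial-realization}.

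There is no real obstacle here. The only point needing care is that the identity must hold on the whole plane, not merely near $x_*$, for Lemma~\ref{lem:growth} to apply. This is guaranteed because \eqref{eq:marked-projections} is a global identity on $X$, coming from $\pi\circ G=F\circ\pi$ in Proposition~\ref{prop:globalization}.
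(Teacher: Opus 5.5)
Your proposal is correct and follows the paper's proof essentially verbatim. Proposition~\ref{prop:plane} supplies the plane coordinate, the entire projections and $2\le\deg G\le4$; nonconstancy comes from the nonzero differentials at $x_*$; and Lemma~\ref{lem:growth}, applied to each $\phi_i$ through the global identity $\phi_i\circ G=\phi_i^2+\lambda\phi_i$, gives polynomiality and $\deg G=2$, with coordinate invariance of the marked data handled just as in the paper's closing sentence.
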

\begin{proof}
Proposition~\ref{prop:plane} gives a plane coordinate, an entire pair
$\phi_1,\phi_2$, and $2\le\deg G\le4$. The projection identities
are
\begin{equation*}
 \phi_i\circ G=\phi_i^2+\lambda\phi_i\qquad(i=1,2).
\end{equation*}
Neither projection is constant, by
\eqref{eq:projection-derivatives}. Applying Lemma~\ref{lem:growth}
to both functions proves \eqref{eq:polynomial-realization}.
Coordinate changes on the surface do not alter the marked
functional equations or the nonvanishing of the differentials.
\end{proof}

\subsection{The polynomial commutant of the quadratic map}\label{sec:polynomial}

For a polynomial of degree $d\ge2$, ``special'' means affinely
conjugate to $z^d$ or to a signed Chebyshev polynomial $\pm T_d$.
We record below the precise global factorization theorem that is
used, which is an established result about polynomials rather than
an assertion about local germs.

\begin{theorem}[Pakovich's minimal-degree factorization]\label{thm:pakovich}
Let $A,B\in\C[z]$ be non-special polynomials of degree at least two,
and define
\begin{equation}\label{eq:E-AB}
 \mathcal E(A,B)=\{X\in\C[z]:\deg X\ge2,\quad A\circ X=X\circ B\}.
\end{equation}
Suppose $\mathcal E(A,B)\ne\varnothing$, and choose
$X_0\in\mathcal E(A,B)$ of minimal degree. Then every
$X\in\mathcal E(A,B)$ has a factorization
\begin{equation*}
 X=\widetilde A\circ X_0,
 \qquad\widetilde A\circ A=A\circ\widetilde A,
\end{equation*}
with a nonconstant polynomial $\widetilde A$, whose degree may be one.
\end{theorem}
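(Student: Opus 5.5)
This is an external result, quoted from Pakovich~\cite[Theorem~1.3]{Pakovich}, and the paper uses it as a black box. If I were to reconstruct the argument, I would follow a field-theoretic route through Lüroth's and Engstrom's theorems. Non-specialness would enter only at one step, which I expect to be the hardest.

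\textbf{Step 1: a common right factor.} Fix $X\in\mathcal E(A,B)$ and a minimal-degree $X_0\in\mathcal E(A,B)$. Consider the subfield $\C(X,X_0)\subset\C(z)$. By Lüroth's theorem it equals $\C(W)$ for some rational $W$. By Engstrom's theorem, since $X$ and $X_0$ are polynomials, $W$ can be chosen polynomial, and we can write
\begin{equation*}
 X=U\circ W,\qquad X_0=U_0\circ W,\qquad U,U_0\in\C[z].
\end{equation*}
The relations $X\circ B=A\circ X\in\C(X)$ and $X_0\circ B\in\C(X_0)$ show that composition with $B$ maps $\C(W)$ into itself. Hence
\begin{equation*}
 W\circ B=R\circ W
\end{equation*}
for a polynomial $R$ with $\deg R=\deg B$.

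\textbf{Step 2: reduce to $W$ being $X_0$ up to an affine change.} The goal is $\deg W=\deg X_0$. Then $X_0=\mu\circ W$ with $\mu$ affine, so $X=\widetilde A\circ X_0$ with $\widetilde A=U\circ\mu^{-1}$ a nonconstant polynomial.

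\textbf{Step 3 (main obstacle).} To obtain $\deg W=\deg X_0$, one would like $W$, up to an affine change, to lie in $\mathcal E(A,B)$, so that minimality of $X_0$ gives the degree equality. Step 1 only produces a semiconjugacy of $B$ to an auxiliary polynomial $R$, together with $A\circ U_0=U_0\circ R$ after cancelling $W$ on the right. The difficulty is to identify $R$ with $A$, or to transport the minimality from $\mathcal E(A,B)$ to $\mathcal E(A,R)$.

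This is exactly where non-specialness has to be used. My first attempt would be Pakovich's description of semiconjugacies of non-special polynomials: for non-special $A$, the set of $X$ with $A\circ X=X\circ B'$ (over all $B'$) is controlled by finitely many ``minimal'' solutions, via invariant curves of $A\times B$. One would then argue that the curve $\{X_0(x)=X(y)\}$ has a component of the form $\{W(x)=W(y)\}$, and that this forces $U_0$ to be affine. Special polynomials must be excluded at this point, because $z^n$ and $T_n$ admit semiconjugacies, such as $z^n\circ z^k=z^k\circ z^n$, that have no common minimal factor.

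\textbf{Step 4: the commutation of $\widetilde A$.} Once $X=\widetilde A\circ X_0$ is known, this part is immediate:
\begin{equation*}
 A\circ\widetilde A\circ X_0=A\circ X=X\circ B=\widetilde A\circ X_0\circ B=\widetilde A\circ A\circ X_0 .
\end{equation*}
Right cancellation of the nonconstant polynomial $X_0$, valid since $X_0$ is surjective on $\C$, gives $A\circ\widetilde A=\widetilde A\circ A$.
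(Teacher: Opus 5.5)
\textbf{Gap: Step~3, which is the whole theorem, is not proved.} Your opening remark is accurate. The paper gives no proof of this statement and simply quotes it as the forward implication of \cite[Theorem~1.3]{Pakovich}, so as a citation you are on the same footing as the paper. Your reconstruction, however, is not a proof.

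Steps~1, 2 and~4 are correct, but nothing in them uses non-specialness, so they cannot force $U_0$ to be affine. For $A=B=z^2$, $X_0=z^2$, $X=z^3$ they produce the consistent data $W=z$, $U_0=z^2$, $R=z^2$, and the conclusion fails there.

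Within your setup, putting $\mu\circ W$ into $\mathcal E(A,B)$ requires $R=\mu^{-1}\circ A\circ\mu$. The only link you have between $R$ and $A$ is $A\circ U_0=U_0\circ R$, and deducing such a conjugacy from it is essentially the statement to be proved. So the minimality of $X_0$ in $\mathcal E(A,B)$ cannot be brought to bear on $W$.

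The mechanism you propose is also misstated. On $\{W(x)=W(y)\}$ one has $X_0(x)=U_0(W(x))$ but $X(y)=U(W(x))$. That set therefore lies in $\{X_0(x)=X_0(y)\}$ and in $\{X(x)=X(y)\}$, not in $\{X_0(x)=X(y)\}$, unless $U=U_0$.

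\textbf{What would close it.} You need a rigidity theorem specific to non-special polynomials; once you have one, the L\"uroth--Engstrom reduction is unnecessary. For example:
\begin{itemize}
\item The image $C$ of $t\mapsto(X_0(t),X(t))$ is an irreducible closed curve dominating both factors.
\item $(A\times A)(C)=C$, because $A\circ X_0=X_0\circ B$, $A\circ X=X\circ B$, and $B$ is surjective on $\C$.
\item For non-special (in their terminology, disintegrated) $A$, the Medvedev--Scanlon classification of $(A\times A)$-invariant curves \cite{MS} gives $C=\{y=g(x)\}$ or $C=\{x=g(y)\}$ for a polynomial $g$ commuting with $A$.
\item In the first case $X=g\circ X_0$, which is the claim with $\widetilde A=g$.
\item In the second case $X_0=g\circ X$, so $\deg X_0\ge\deg X$. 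Minimality then forces $\deg g=1$, and $X=g^{-1}\circ X_0$ with $g^{-1}$ commuting with $A$.
\end{itemize}
Non-specialness and minimality both enter only here. Alternatively, you can rely on the semiconjugacy theory of \cite{Pakovich} itself.
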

This is the forward implication of
\cite[Theorem~1.3, p.~1421]{Pakovich}. The exclusion of degree-one
maps in \eqref{eq:E-AB} is part of the statement, and it ensures
that, when $A=B=f$ is quadratic, $X_0=f$ is a minimal-degree
choice. The global commutation problem originates in the work of
Ritt \cite{Ritt}, and the formulation above also specifies the
right factor needed here.

\begin{lemma}[No nonidentity affine commutants]\label{lem:affine}
For $f(z)=z^2+\lambda z$, the only affine polynomial commuting with
$f$ is the identity.
\end{lemma}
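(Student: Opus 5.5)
The plan is a direct coefficient comparison. Let $L(z)=az+b$ with $a\ne0$, and suppose $L\circ f=f\circ L$ as polynomial identities. We expand both sides and match the coefficients of $z^2$, $z$ and $z^0$. This should force $a=1$ and $b=0$, so that $L=\id$.

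First we compute the two compositions:
\begin{equation*}
 L(f(z))=az^2+a\lambda z+b,
\end{equation*}
\begin{equation*}
 f(L(z))=(az+b)^2+\lambda(az+b)=a^2z^2+(2ab+\lambda a)z+b^2+\lambda b.
\end{equation*}
The $z^2$ coefficients give $a=a^2$, hence $a=1$ because $a\ne0$. With $a=1$, the $z$ coefficients give $\lambda=2b+\lambda$, hence $b=0$. The constant terms then read $b=b^2+\lambda b$, which holds trivially at $b=0$ and is used only as a consistency check. Therefore $L=\id$.

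There is no real obstacle. The one point to note is that the argument works for every $\lambda$, not only for $|\lambda|=1$. The $z^2$ coefficient eliminates every nontrivial dilation, and the linear coefficient eliminates every translation. In particular, the identity $L\circ f=f\circ L$ is used only as an identity of polynomials, which is how the lemma is applied: to the factor $\widetilde A$ of degree one produced by Theorem~\ref{thm:pakovich}, and in the affine normalization step.
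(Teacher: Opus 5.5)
Your proof is correct and is the same coefficient comparison the paper uses: the $z^2$ coefficient gives $a=1$, and then the linear coefficient gives $b=0$. One minor point about your closing remark: in the paper the lemma is used in Proposition~\ref{prop:polynomial-centre} (for degree-one commutants and for the final factor $P_n$), not in the affine normalization of $G$; this does not affect the proof.
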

\begin{proof}
Let $L(z)=az+b$ with $a\ne0$. Expanding $L\circ f=f\circ L$ gives
\begin{equation*}
 az^2+a\lambda z+b
 =a^2z^2+(2ab+\lambda a)z+b^2+\lambda b.
\end{equation*}
The coefficients of $z^2$ and $z$ imply respectively
\begin{equation*}
 a=a^2\Longrightarrow a=1,
 \qquad a\lambda=2ab+\lambda a\Longrightarrow 2ab=0
 \Longrightarrow b=0.
\end{equation*}
Thus $L=\id$.
\end{proof}

\begin{lemma}[The quadratic map is non-special]\label{lem:nonspecial}
If $\lambda=e^{2\pi i\alpha}$ with $\alpha$ irrational, then
$f(z)=z^2+\lambda z$ is non-special.
\end{lemma}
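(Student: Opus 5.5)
We must show that $f(z)=z^2+\lambda z$ is affinely conjugate neither to $z^2$ nor to $\pm T_2$. The plan is to use an affine-invariant description of quadratic polynomials, namely the multipliers of their finite fixed points. In degree two, $-T_2(z)=-(2z^2-1)$ is affinely conjugate to $T_2$: the conjugation by $z\mapsto -z$ gives $-T_2(-z)=-T_2(z)$, so it suffices to check that $T_2$ and $-T_2$ have the same fixed-point multipliers, or simply to treat both directly. Since every quadratic polynomial is affinely conjugate to a unique $z^2+c$, we could also compare the invariant $c$. Fixed-point multipliers are the cleaner route, because conjugation preserves the unordered pair of multipliers of the two finite fixed points (counted with multiplicity).

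First compute for $f$. The fixed points are $z=0$ and $z=1-\lambda$. The multipliers are $f'(0)=\lambda$ and $f'(1-\lambda)=2(1-\lambda)+\lambda=2-\lambda$. So the multiplier pair is $\{\lambda,2-\lambda\}$.

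Next compute for the special models.
\begin{itemize}
\item For $z^2$, the fixed points are $0$ and $1$, with multipliers $\{0,2\}$.
\item For $T_2(z)=2z^2-1$, the fixed points satisfy $2z^2-z-1=0$, so they are $z=1$ and $z=-1/2$. The multipliers $4z$ are $\{4,-2\}$.
\item For $-T_2(z)=1-2z^2$, the fixed points satisfy $2z^2+z-1=0$, so they are $z=1/2$ and $z=-1$. The multipliers $-4z$ are $\{-2,4\}$.
\end{itemize}

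An affine conjugacy $L$ maps fixed points to fixed points and preserves derivatives there. Hence if $f$ were special, $\lambda$ would lie in $\{0,2,4,-2\}$. Every element of this set is real with modulus different from $1$, whereas $|\lambda|=1$. This is a contradiction, so $f$ is non-special.

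The argument needs only $|\lambda|=1$, or just $\lambda\notin\{0,\pm2,4\}$; irrationality is not needed. The only point requiring care is the definition of ``special'', namely affine conjugacy $L\circ P\circ L^{-1}$. That is exactly what makes the fixed-point multipliers invariant, via the chain rule at $L(p)$. No real obstacle is expected.
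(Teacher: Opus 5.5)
Your proposal is correct and is essentially the paper's argument: an affine conjugacy carries the fixed point $0$ of $f$, with multiplier $\lambda$, to a finite fixed point of the special model. The multipliers available there are $\{0,2\}$ for $z^2$ and $\{4,-2\}$ for $\pm T_2$, and none of these has modulus one. You are right that $|\lambda|=1$ alone suffices and irrationality is not needed.
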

\begin{proof}
An affine conjugacy preserves the multiplier of every finite fixed
point. The finite fixed-point multipliers of the special quadratic
models are
\begin{align*}
 z^2&:\quad 0,\ 2,\\
 T_2(z)=2z^2-1&:\quad 4,\ -2,\nonumber\\
 -T_2(z)=-2z^2+1&:\quad -2,\ 4.\nonumber
\end{align*}
For example, the fixed points of $T_2$ are $1$ and $-1/2$, and
$T_2'(z)=4z$, so the other values follow by the same substitution.
The multiplier of $f$ at $0$ is $\lambda$, whereas
\begin{equation*}
 |\lambda|=1,\quad \lambda\text{ not a root of unity}
 \quad\Longrightarrow\quad \lambda\notin\{0,2,4,-2\}.
\end{equation*}
Therefore $f$ is not affinely conjugate to any special quadratic.
\end{proof}

\begin{proposition}[Polynomial commutants are iterates]\label{prop:polynomial-centre}
Let $\lambda=e^{2\pi i\alpha}$ with $\alpha$ irrational and
$f(z)=\lambda z+z^2$. Every nonconstant polynomial $P$ satisfying
$P\circ f=f\circ P$ is $\iter f n$ for some $n\ge0$.
\end{proposition}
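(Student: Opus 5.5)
We argue by strong induction on $\deg P$, using Theorem~\ref{thm:pakovich} with $A=B=f$ to peel off one factor of $f$ at a time. If $\deg P=1$, Lemma~\ref{lem:affine} gives $P=\id=\iter f0$ directly. So assume $\deg P\ge2$. Then $P\in\mathcal E(f,f)$, which is therefore nonempty. Moreover $f\in\mathcal E(f,f)$ has degree two, which is the least degree allowed in \eqref{eq:E-AB}, so $X_0=f$ is a minimal-degree choice. By Lemma~\ref{lem:nonspecial}, $f$ is non-special, so the hypotheses of Theorem~\ref{thm:pakovich} hold.

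Theorem~\ref{thm:pakovich} then yields a nonconstant polynomial $\widetilde P$ with
\begin{equation*}
 P=\widetilde P\circ f,\qquad \widetilde P\circ f=f\circ\widetilde P,
 \qquad \deg\widetilde P=\tfrac12\deg P<\deg P.
\end{equation*}
By the induction hypothesis, applied to $\widetilde P$, we have $\widetilde P=\iter f m$ for some $m\ge0$. The base case of degree one is again covered by Lemma~\ref{lem:affine}. Hence $P=\iter f{m+1}$.

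A byproduct is that $\deg P$ is a power of two, since every degree in the chain halves. This fits the conclusion but is not needed for the argument.

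The only delicate point is the bookkeeping in applying Theorem~\ref{thm:pakovich}. We must check three things: that the factor $\widetilde P$ may have degree one, which the statement explicitly allows; that $X_0=f$ really has minimal degree in $\mathcal E(f,f)$, which holds because degree-one maps are excluded from $\mathcal E$; and that the factorization is a composition on the right by $f$ rather than an arbitrary minimal element. Since we choose $X_0=f$ ourselves, all three are immediate. Beyond this bookkeeping, the argument is simply the induction on degree.
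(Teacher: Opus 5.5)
Your proof is correct and follows the paper's argument. Both apply Theorem~\ref{thm:pakovich} with $A=B=X_0=f$, justified by Lemma~\ref{lem:nonspecial} and the exclusion of degree-one maps from $\mathcal E(f,f)$, to split off a right factor $f$ with halved degree, and both finish with Lemma~\ref{lem:affine} at degree one; the only difference is that you phrase the descent as strong induction on $\deg P$, whereas the paper iterates the factorization until the degree reaches one.
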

\begin{proof}
If $\deg P=1$, Lemma~\ref{lem:affine} gives $P=\id=\iter f0$.
Suppose $\deg P\ge2$. By Lemma~\ref{lem:nonspecial}, $f$ is
non-special. The set $\mathcal E(f,f)$ contains $f$, and
\begin{equation}\label{eq:minimal-degree-two}
 2\le\min_{X\in\mathcal E(f,f)}\deg X\le\deg f=2.
\end{equation}
Thus Theorem~\ref{thm:pakovich} applies with $A=B=X_0=f$ and gives
\begin{equation*}
 P=P_1\circ f,\qquad P_1\circ f=f\circ P_1,
 \qquad\deg P_1=\frac{\deg P}{2}.
\end{equation*}
If $\deg P_1\ge2$, we apply the same theorem again. At every step
where the remaining degree is at least two, this gives
\begin{equation*}
 P_j=P_{j+1}\circ f,\qquad P_{j+1}f=fP_{j+1},
 \qquad \deg P_{j+1}=\tfrac12\deg P_j.
\end{equation*}
A strictly decreasing sequence of positive integer degrees cannot
continue indefinitely, so for some $n\ge0$ the process ends with
$\deg P_n=1$, and
\begin{equation*}
 P=P_n\circ\iter f n.
\end{equation*}
Lemma~\ref{lem:affine} then gives $P_n=\id$, which proves
$P=\iter f n$.
\end{proof}

\section{The main theorem and its consequences}\label{sec:conclude}

\subsection{Proof of the main theorem and the global graph}\label{sec:mainproof}

\subsubsection{Affine normalization and reconstruction of the germ}

\begin{proof}[Proof of Theorem~\ref{thm:main}]
Fix any $h\in\Cent(f)$. Choose representatives near $0$ and apply
Corollary~\ref{cor:marked}. The data satisfy
\begin{equation*}
 Gj=jf,\qquad \phi_iG=f\phi_i,\qquad
 \phi_1j=\id,\qquad\phi_2j=h,
\end{equation*}
with a retained interior fixed point $x_*=j(0)$.
Proposition~\ref{prop:plane} and
Proposition~\ref{prop:polynomial-realization} identify $X$ with
$\C$ and give
\begin{equation*}
 G,\phi_1,\phi_2\in\C[w],\qquad\deg G=2.
\end{equation*}
Since $G(x_*)=x_*$ and $G'(x_*)=\lambda$, its exact quadratic
Taylor expansion is
\begin{equation}\label{eq:quadratic-G-normal-form}
 G(w)=x_*+\lambda(w-x_*)+a(w-x_*)^2,
 \qquad a\ne0.
\end{equation}
Having fixed the plane coordinate $w$ on the identified surface
$X\simeq\C$, we make an affine change of this coordinate. Define the
affine map
\begin{equation*}
 A(w)=a(w-x_*),\qquad A^{-1}(z)=x_*+z/a.
\end{equation*}
Direct substitution into \eqref{eq:quadratic-G-normal-form} gives
\begin{align*}
 A(G(A^{-1}(z)))
 &=a\left(\lambda\frac za+a\frac{z^2}{a^2}\right)\\
 &=\lambda z+z^2=f(z).\nonumber
\end{align*}
Thus
\begin{equation}\label{eq:A-conjugacy}
 A\circ G\circ A^{-1}=f.
\end{equation}

For $i=1,2$, put
\begin{equation*}
 P_i=\phi_i\circ A^{-1}\in\C[z].
\end{equation*}
These polynomials are nonconstant, fix $0$, and have nonzero
first derivatives there. Using \eqref{eq:A-conjugacy} and the
semiconjugacy equations,
\begin{align*}
 P_i\circ f
 &=\phi_i\circ A^{-1}\circ A\circ G\circ A^{-1}\\
 &=\phi_i\circ G\circ A^{-1}
  =f\circ\phi_i\circ A^{-1}
  =f\circ P_i.\nonumber
\end{align*}
Proposition~\ref{prop:polynomial-centre} now applies and yields
\begin{equation}\label{eq:P-i-iterates}
 P_1=\iter f{n_1},\qquad P_2=\iter f{n_2},
 \qquad n_1,n_2\in\Z_{\ge0}.
\end{equation}

We finally specify the inverse branch in the reconstruction.
Since
\begin{equation*}
 P_1(0)=0,\qquad P_1'(0)=\lambda^{n_1}\ne0,
\end{equation*}
the unique inverse germ of $P_1$ fixing $0$ is
$\iter f{-n_1}$. Moreover,
\begin{equation*}
 P_1^{-1}=A\circ\phi_1^{-1}
\end{equation*}
when the inverse of $\phi_1$ is taken at $x_*$. Consequently
\begin{align*}
 h
 &=\phi_2\circ\phi_1^{-1}
  =(\phi_2\circ A^{-1})\circ(A\circ\phi_1^{-1})\\
 &=P_2\circ P_1^{-1}
  =\iter f{n_2}\circ\iter f{-n_1}
  =\iter f{n_2-n_1}.\nonumber
\end{align*}
This proves $\Cent(f)\subset\cyc f$ for every commuting germ,
without a restriction on its initial domain size or multiplier.
The reverse inclusion is \eqref{eq:integer-group}.

If $\iter f k=\iter f\ell$, comparison of derivatives gives
\begin{equation*}
 \lambda^k=\lambda^\ell
 \quad\Longrightarrow\quad\lambda^{k-\ell}=1
 \quad\Longrightarrow\quad k=\ell,
\end{equation*}
because $\alpha$ is irrational. This proves uniqueness.
\end{proof}

\subsubsection{The two arithmetic cases}

\begin{proof}[Proof of Corollary~\ref{cor:dichotomy}]
If $\alpha\notin\Bcal$, the established equivalence
\eqref{eq:BY} gives nonlinearizability of $Q_\alpha$.
Theorem~\ref{thm:main} gives the second line of
\eqref{eq:dichotomy}.

If $\alpha\in\Bcal$, choose an analytic linearizing coordinate
$\psi$. For $h\in\Cent(Q_\alpha)$, define
\begin{equation*}
 H=\psi\circ h\circ\psi^{-1},\qquad
 H(w)=\sum_{n\ge1}c_nw^n,
 \qquad c_1=h'(0)\ne0.
\end{equation*}
The commutation equation becomes
\begin{equation}\label{eq:rotation-coefficient-equation}
 H(\lambda w)=\lambda H(w)
 \quad\Longrightarrow\quad
 (\lambda^n-\lambda)c_n=0\quad(n\ge1).
\end{equation}
For $n\ge2$ the factor $\lambda^n-\lambda$ is nonzero.
Therefore
\begin{equation*}
 H(w)=c_1w=R_{c_1}(w).
\end{equation*}
Conversely, every $R_\mu$, $\mu\ne0$, commutes with
$R_\lambda$. The germ $\psi^{-1}R_\mu\psi$ is defined on a
sufficiently small neighbourhood of $0$, for every such $\mu$.
This proves the first line of \eqref{eq:dichotomy}.

In this first case, the centraliser contains uncountably many
multipliers $\mu\in\C^*$, whereas the multiplier set of the
integral iterates is the countable group
\begin{equation*}
 \{\lambda^k:k\in\Z\}.
\end{equation*}
Thus the first line does not give a trivial centraliser.
Combining the two cases proves \eqref{eq:iff-trivial}.
\end{proof}

\subsubsection{Minimality of the globalized parametrization}
The local injectivity of the map $\pi$ gives a sharper description
than the representation \eqref{eq:P-i-iterates} alone. Once algebraicity
is known, graph descriptions of invariant curves belong to the theory
of Medvedev--Scanlon~\cite{MS}, with a related statement in
Pakovich~\cite[Theorem~4.9]{Pakovich}. The elementary argument below
identifies the particular normalization already constructed, and
these algebraic-curve results are not used to infer algebraicity of
the initial analytic graph.

\begin{proposition}[Canonical globalized graph]\label{prop:canonical-graph}
In the construction used for Theorem~\ref{thm:main}, the normalized
projection exponents satisfy
\begin{equation}\label{eq:minimal-exponents}
 \min(n_1,n_2)=0.
\end{equation}
If $h=\iter f k$, then the globalized curve $\Gamma=\pi(X)$ is
\begin{equation}\label{eq:canonical-curve}
 \Gamma=
 \begin{cases}
  \{(z,\iter f k(z)):z\in\C\},& k\ge0,\\[1mm]
  \{(\iter f{-k}(w),w):w\in\C\},&k<0.
 \end{cases}
\end{equation}
In particular, $\Gamma$ is a smooth irreducible algebraic curve,
and one of its two normalized projections has degree one.
\end{proposition}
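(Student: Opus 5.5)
The plan is to exploit the local injectivity \eqref{eq:pi-locally-injective} of $\pi=(\phi_1,\phi_2)$ in the normalized plane coordinate. After the affine change $A$ we have $\pi\circ A^{-1}=(P_1,P_2)=(\iter f{n_1},\iter f{n_2})$ on $\C$. Suppose both $n_1,n_2\ge1$. Then $(P_1,P_2)=(\iter f{n_1-1}\circ f,\ \iter f{n_2-1}\circ f)$ factors through $f$. Since $f$ has a critical point $c=-\lambda/2$, the map $z\mapsto(P_1(z),P_2(z))$ has vanishing derivative at $c$. Moreover it takes equal values at $c\pm\varepsilon$, because $f(c+\varepsilon)=f(c-\varepsilon)$. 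Hence $(P_1,P_2)$ is not injective on any neighbourhood of $c$. This contradicts \eqref{eq:pi-locally-injective}, since $A^{-1}$ is a biholomorphism of $\C$ onto $X$. Therefore $\min(n_1,n_2)=0$, which is \eqref{eq:minimal-exponents}.

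Next, relate the exponents to $k$. From the proof of Theorem~\ref{thm:main}, $h=\iter f{n_2-n_1}$. The uniqueness statement in Theorem~\ref{thm:main} then gives $k=n_2-n_1$. If $k\ge0$, the minimality forces $n_1=0$ and $n_2=k$. If $k<0$, it forces $n_2=0$ and $n_1=-k$.

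For the curve itself, recall that $\Gamma=\pi(X)$ by \eqref{eq:global-diagram}, and that $X=A^{-1}(\C)$. So $\Gamma=\{(P_1(z),P_2(z)):z\in\C\}$. When $k\ge0$ this is $\{(z,\iter fk(z))\}$, and when $k<0$ it is $\{(\iter f{-k}(w),w)\}$, giving \eqref{eq:canonical-curve}. Each set is the graph of a polynomial over one coordinate axis. It is therefore the zero set of the irreducible polynomial $y-\iter fk(x)$ or $x-\iter f{-k}(y)$. Such a curve is smooth, because the partial derivative in the linear variable is $1$. It is irreducible, because it is biholomorphic to $\C$ via its parametrization. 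The projection equal to the identity has degree one.

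The only delicate point is the local non-injectivity near the critical point. One must check that the failure of injectivity happens in $X$ itself and not merely in some auxiliary parametrization. This is immediate once one notes that $A^{-1}$ is a global coordinate on $X$, so $c\pm\varepsilon$ correspond to distinct points of $X$ with the same image under $\pi$.
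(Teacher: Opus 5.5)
Your proposal is correct and follows the paper's route: the factorization of $(P_1,P_2)$ through $f$ and the symmetry $f(c+\varepsilon)=f(c-\varepsilon)$ at $c=-\lambda/2$ contradict the local injectivity \eqref{eq:pi-locally-injective}; then $k=n_2-n_1$ and $\Gamma=\{(P_1(z),P_2(z)):z\in\C\}$ give the curve. Your remark that $(P_1,P_2)$ has vanishing derivative at $c$ is not by itself a contradiction, since $\pi$ is only locally injective and need not be an immersion at centres of singular branches, so it is the equal values at $c\pm\varepsilon$ that do the work, exactly as you then use.
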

\begin{proof}
Suppose $n_1,n_2\ge1$. The two components of
$\pi\circ A^{-1}=(P_1,P_2)$ then factor through $f$:
\begin{equation*}
 (P_1(z),P_2(z))
 =\bigl(\iter f{n_1-1}(f(z)),\iter f{n_2-1}(f(z))\bigr).
\end{equation*}
At the critical point $c_*=-\lambda/2$,
\begin{equation*}
 f(c_*+t)=f(c_*-t)=-\frac{\lambda^2}{4}+t^2.
\end{equation*}
Hence for every sufficiently small $t\ne0$,
\begin{equation*}
 (P_1(c_*+t),P_2(c_*+t))
 =(P_1(c_*-t),P_2(c_*-t)),\qquad c_*+t\ne c_*-t.
\end{equation*}
This contradicts the local injectivity of $\pi\circ A^{-1}$,
which follows from \eqref{eq:pi-locally-injective} and the
biholomorphic nature of $A$. Thus \eqref{eq:minimal-exponents}
holds.

Since $k=n_2-n_1$, we have $(n_1,n_2)=(0,k)$ when $k\ge0$ and
$(n_1,n_2)=(-k,0)$ when $k<0$. Substituting into
\eqref{eq:P-i-iterates}, and using that $A(X)=\C$, gives
\eqref{eq:canonical-curve}. In the first case the curve has defining
equation
\begin{equation*}
 y-\iter f k(x)=0,
\end{equation*}
whose derivative in $y$ is $1$. In the second case its defining
equation is
\begin{equation*}
 x-\iter f{-k}(y)=0,
\end{equation*}
whose derivative in $x$ is $1$. The equations are irreducible,
since they are monic of degree one in the indicated variable, and
they also prove smoothness and the assertion about the degree-one
projection.
\end{proof}

\subsection{Formal commutants, convergence, and iterative roots}\label{sec:consequences}

\subsubsection{Multipliers of analytic commutants}
We first justify independently the unit-multiplier assertion used when
comparing conventions in the introduction. This is standard local
linearization theory rather than an additional arithmetic
classification.

\begin{lemma}[Unit multipliers in the nonlinearizable case]\label{lem:unit-multiplier}
Let $F\in\Diff$ have $F'(0)=\lambda$, with $|\lambda|=1$, and suppose
that $F$ is not analytically linearizable. Then
\begin{equation*}
 H\in\Cent(F)\quad\Longrightarrow\quad |H'(0)|=1.
\end{equation*}
\end{lemma}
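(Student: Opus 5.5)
We argue by contradiction: we show that a commuting germ with multiplier of modulus different from one linearizes $F$. Let $H\in\Cent(F)$ and put $\mu=H'(0)$. Replacing $H$ by its inverse germ $H^{-1}$, which also commutes with $F$ and has multiplier $\mu^{-1}$, we may assume $|\mu|<1$ whenever $|\mu|\ne1$. So suppose $0<|\mu|<1$. Then $0$ is an attracting fixed point of $H$.

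The first step is Koenigs linearization of $H$. There is a unique germ $\psi$ with
\begin{equation*}
 \psi(0)=0,\qquad \psi'(0)=1,\qquad \psi\circ H=\mu\,\psi,
\end{equation*}
given by the locally uniform limit $\psi=\lim_{n\to\infty}\mu^{-n}\iter Hn$ on a small disc.

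The second step transports the commutation relation. Set $\widetilde F=\psi\circ F\circ\psi^{-1}$. The relation $H\circ F=F\circ H$ gives
\begin{equation*}
 \widetilde F(\mu w)=\mu\,\widetilde F(w)
\end{equation*}
near $0$. Writing $\widetilde F(w)=\sum_{n\ge1}c_nw^n$ with $c_1=\lambda$, comparison of coefficients yields
\begin{equation*}
 (\mu^n-\mu)c_n=0\qquad(n\ge1),
\end{equation*}
the same computation as \eqref{eq:rotation-coefficient-equation}. For $n\ge2$ we have $|\mu^n|<|\mu|$, so $\mu^n\ne\mu$ and hence $c_n=0$. Therefore $\widetilde F(w)=\lambda w$, and $\psi$ analytically linearizes $F$. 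This contradicts the hypothesis on $F$, so $|\mu|=1$.

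An alternative for the second step is to check directly that $\psi\circ F$ and $\lambda\psi$ both satisfy the Koenigs equation for $H$ with derivative $\lambda$ at $0$. Uniqueness of the Koenigs coordinate up to scaling then gives $\psi\circ F=\lambda\psi$. There is no real obstacle: the only points to check are that $\mu\ne0$, which holds because $H\in\Diff$, and that all germs are defined on a common small disc. That follows because $\psi$ is a local biholomorphism and $F$ fixes $0$.
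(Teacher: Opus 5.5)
Your proof is correct and follows the paper's argument essentially step for step. Like the paper, you pass to the inverse germ to get $0<|\mu|<1$, Koenigs-linearize the commuting germ, and use $(\mu^n-\mu)c_n=0$ for $n\ge2$ to force the conjugated $F$ to equal $R_\lambda$, contradicting nonlinearizability.
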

\begin{proof}
Suppose $|H'(0)|\ne1$. Replacing $H$ by its inverse if necessary,
we obtain $K\in\Cent(F)$ with
\begin{equation*}
 b=K'(0),\qquad 0<|b|<1.
\end{equation*}
Koenigs' linearization theorem~\cite{Milnor} gives an invertible
analytic germ $\chi$ such that
\begin{equation}\label{eq:koenigs-commutant}
 \chi\circ K\circ\chi^{-1}=R_b.
\end{equation}
Put $T=\chi\circ F\circ\chi^{-1}=\lambda z+\sum_{n\ge2}t_nz^n$.
Since $KF=FK$, equations \eqref{eq:koenigs-commutant} imply
\begin{equation*}
 T(bz)=bT(z),\qquad (b^n-b)t_n=0\quad(n\ge2).
\end{equation*}
But $0<|b|<1$ gives $|b^{n-1}|<1$ for $n\ge2$, so $b^n-b\ne0$.
Consequently $T(z)=\lambda z$, which analytically linearizes $F$,
a contradiction.
\end{proof}

\subsubsection{The formal centraliser}
The formal coefficient calculation is standard for a nonresonant
one-dimensional germ, as in \cite{Milnor}. In the quadratic setting,
the uniqueness of a commutant with prescribed iterate multiplier also
appears in \cite[Lemma~3.4]{ACE}. The new convergence conclusions below
are consequences of Theorem~\ref{thm:main} rather than a new formal
linearization theorem.

Let $\Diffhat$ denote the group of formal series
$a_1z+\sum_{n\ge2}a_nz^n$ with $a_1\ne0$, and let
$\Centfor(F)$ denote the centraliser in that formal group.
Write $[z^n]S$ for the coefficient of $z^n$ in a series $S$.

\begin{lemma}[Formal linearization and the multiplier coordinate]\label{lem:formal}
Let
\begin{equation}\label{eq:formal-F}
 F(z)=az+\sum_{n\ge2}F_nz^n\in\Diffhat,
 \qquad a^n\ne1\quad(n\ge1).
\end{equation}
There is a unique normalized formal linearizing series
$L(z)=z+\sum_{n\ge2}\ell_nz^n$ satisfying $L\circ F=aL$.
For every $\mu\in\C^*$ there is exactly one formal commutant
with derivative $\mu$, namely
\begin{equation}\label{eq:formal-commutant-formula}
 H_\mu=L^{-1}\circ R_\mu\circ L.
\end{equation}
The derivative map identifies $\Centfor(F)$ with $\C^*$ and
\begin{equation*}
 H_\mu\circ H_\nu=H_{\mu\nu},\qquad
 H_\mu^{-1}=H_{\mu^{-1}}.
\end{equation*}
\end{lemma}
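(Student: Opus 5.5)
The plan is to solve the functional equation $L\circ F=aL$ coefficient by coefficient, and then reduce the commutant problem to the linear map $R_a$.

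\textbf{Step 1: existence and uniqueness of $L$.} Write $L(z)=z+\sum_{n\ge2}\ell_nz^n$. Compare the coefficient of $z^n$ on both sides of $L\circ F=aL$. Since $F$ has no constant term, $[z^n](L\circ F)$ equals $a^n\ell_n$ plus a polynomial $E_n$ in $F_2,\dots,F_n$ and $\ell_1=1,\ell_2,\dots,\ell_{n-1}$. The equation at order $n\ge2$ therefore reads
\begin{equation*}
 (a-a^n)\ell_n=E_n(\ell_1,\ldots,\ell_{n-1};F_2,\ldots,F_n).
\end{equation*}
By \eqref{eq:formal-F}, $a^{n-1}\ne1$, so $a-a^n=a(1-a^{n-1})\ne0$. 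Hence $\ell_n$ is uniquely determined by induction. This gives existence and uniqueness of the normalized $L$. It also uses that $a\ne0$, which holds because $F\in\Diffhat$.

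\textbf{Step 2: commutants of $R_a$.} Conjugation by $L$ is a group isomorphism of $\Diffhat$. It carries $\Centfor(F)$ onto $\Centfor(R_a)$, and it preserves derivatives at $0$ because $L'(0)=1$. So it suffices to compute $\Centfor(R_a)$. Take $K=\sum_{n\ge1}c_nz^n$ with $K(az)=aK(z)$. Exactly as in \eqref{eq:rotation-coefficient-equation}, this gives $(a^n-a)c_n=0$, and nonresonance forces $c_n=0$ for $n\ge2$. Thus $\Centfor(R_a)=\{R_\mu:\mu\in\C^*\}$.

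\textbf{Step 3: the group structure.} Pulling back by $L$ shows that $H_\mu=L^{-1}\circ R_\mu\circ L$ commutes with $F$ and has derivative $\mu$. It is the only formal commutant with that derivative. The relation $R_\mu R_\nu=R_{\mu\nu}$ transports to $H_\mu\circ H_\nu=H_{\mu\nu}$ and $H_\mu^{-1}=H_{\mu^{-1}}$. Hence the derivative map is an isomorphism $\Centfor(F)\to\C^*$.

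The only point needing care is the triangular structure in Step 1: $[z^n](\ell_kF^k)$ must involve only $\ell_k$ with $k\le n$, and the $k=n$ term must be exactly $a^n\ell_n$. Both facts follow because $F^k=a^kz^k+O(z^{k+1})$. Everything else is routine.
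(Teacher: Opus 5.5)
Your proposal is correct and follows essentially the same route as the paper. Both determine $L$ from the triangular coefficient recursion with nonzero factor $a^n-a$. Both then conjugate a commutant by $L$ to a series commuting with $R_a$, which nonresonance forces to be a dilation. Your framing of Step 2 as the conjugation isomorphism $\Centfor(F)\to\Centfor(R_a)$ merely repackages the paper's direct converse argument.
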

\begin{proof}
The coefficient equation for $L\circ F=aL$ at degree $n\ge2$ is
\begin{equation}\label{eq:formal-linearizer-recursion}
 (a^n-a)\ell_n
 =-F_n-\sum_{j=2}^{n-1}\ell_j[z^n]F(z)^j.
\end{equation}
The right-hand side depends only on $\ell_2,\ldots,\ell_{n-1}$,
and $a^n-a\ne0$ by \eqref{eq:formal-F}. Thus
\eqref{eq:formal-linearizer-recursion} determines $\ell_n$
uniquely at each degree, proving existence and uniqueness of $L$.
Its linear coefficient is one, so it has a unique formal inverse,
obtained recursively by solving $L\circ L^{-1}=z$.

Formula \eqref{eq:formal-commutant-formula} gives
\begin{align*}
 H_\mu\circ F
 &=L^{-1}\circ R_\mu\circ R_a\circ L\\
 &=L^{-1}\circ R_a\circ R_\mu\circ L
  =F\circ H_\mu.\nonumber
\end{align*}
It has derivative $\mu$. Conversely, if $H\in\Centfor(F)$,
then $S=L\circ H\circ L^{-1}$ satisfies
\begin{equation*}
 S(az)=aS(z),\qquad
 (a^n-a)[z^n]S=0\quad(n\ge1).
\end{equation*}
Thus $S(z)=\mu z$ for $\mu=H'(0)$, proving uniqueness and
\eqref{eq:formal-commutant-formula}. The group identities follow
by composing the dilations between $L$ and $L^{-1}$.
\end{proof}

\begin{corollary}[Exact convergence set]\label{cor:convergence}
Let $\alpha\notin\Bcal$ be irrational, let
$\lambda=e^{2\pi i\alpha}$, and let $f=Q_\alpha$.
For the unique formal commutant $H_\mu$ with multiplier $\mu\ne0$,
\begin{equation}\label{eq:exact-convergence-set}
 H_\mu\in\C\{z\}
 \quad\Longleftrightarrow\quad \mu\in\{\lambda^k:k\in\Z\}.
\end{equation}
If $\mu=\lambda^k$, then $H_\mu=\iter f k$. Otherwise the radius
of convergence of $H_\mu$ is zero.
\end{corollary}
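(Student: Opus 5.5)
The plan is to combine Lemma~\ref{lem:formal} with Theorem~\ref{thm:main} and the Brjuno--Yoccoz equivalence \eqref{eq:BY}. Since $\alpha$ is irrational, $\lambda^n\ne1$ for all $n\ge1$, so Lemma~\ref{lem:formal} applies to $F=f$ with $a=\lambda$. It gives a unique formal commutant $H_\mu$ for each $\mu\in\C^*$, and the derivative map is injective on $\Centfor(f)$.

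First I would treat the case $\mu=\lambda^k$. The iterate $\iter fk$ is a convergent invertible germ commuting with $f$. Its derivative is $\lambda^k$, since for $k<0$ it is the inverse germ of $\iter f{-k}$. Uniqueness in Lemma~\ref{lem:formal} then forces $H_{\lambda^k}=\iter fk$, so $H_\mu\in\C\{z\}$. This gives the implication from right to left in \eqref{eq:exact-convergence-set}, together with the identification $H_\mu=\iter fk$.

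Next I would prove the converse. Suppose $H_\mu$ has a positive radius of convergence. Then it defines an element of $\Diff$. The formal identity $H_\mu\circ f=f\circ H_\mu$ between convergent series becomes an identity of germs, because composition of convergent series agrees with formal composition. Hence $H_\mu\in\Cent(f)$. Since $\alpha\notin\Bcal$, \eqref{eq:BY} says $f$ is not linearizable. Theorem~\ref{thm:main} then gives $H_\mu=\iter fk$ for some $k\in\Z$, and therefore $\mu=H_\mu'(0)=\lambda^k$.

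Finally, the dichotomy follows: if $\mu\notin\{\lambda^k\}$, then $H_\mu$ is not a convergent series, which means its radius of convergence is zero. The only point needing care is the formal-versus-analytic compatibility of composition and inverses, and that is standard. No step is a real obstacle, since the content lies in Theorem~\ref{thm:main}.
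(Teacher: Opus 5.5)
Your proposal is correct and follows the paper's proof essentially step by step. Uniqueness in Lemma~\ref{lem:formal} identifies $H_{\lambda^k}$ with $\iter fk$, and conversely a convergent $H_\mu$ is a genuine element of $\Cent(f)$, so \eqref{eq:BY} together with Theorem~\ref{thm:main} forces $\mu=\lambda^k$; the paper packages these two inputs as Corollary~\ref{cor:dichotomy}.
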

\begin{proof}
If $H_\mu$ converges on a disc of positive radius, its nonzero
linear coefficient makes it an invertible analytic germ.
Its formal commutation identity is then an identity of convergent
germs. Hence
\begin{equation}\label{eq:convergence-necessity}
 H_\mu\in\Cent(f)=\cyc f
 \quad\Longrightarrow\quad H_\mu=\iter f k,
 \quad\mu=\lambda^k
\end{equation}
by Corollary~\ref{cor:dichotomy}. Conversely, each positive or
negative integral iterate is an analytic germ. Its derivative is
$\lambda^k$, and Lemma~\ref{lem:formal} identifies its Taylor
series with $H_{\lambda^k}$.
For any remaining $\mu$, a positive convergence radius would
contradict \eqref{eq:convergence-necessity}, so its radius must be
zero.
\end{proof}

\subsubsection{The explicit coefficient recursion}
For completeness, the formal commutants in
Corollary~\ref{cor:convergence} are determined by an explicit
triangular system. Write
\begin{equation*}
 H_\mu(z)=\sum_{n\ge1}b_nz^n,\qquad b_1=\mu.
\end{equation*}
The binomial identity
\begin{equation*}
 (\lambda z+z^2)^j
 =\sum_{s=0}^j\binom js\lambda^{j-s}z^{j+s}
\end{equation*}
gives
\begin{equation*}
 [z^n]H_\mu(f(z))
 =\lambda^n b_n+
   \sum_{j=\lceil n/2\rceil}^{n-1}
    \binom{j}{n-j}\lambda^{2j-n}b_j.
\end{equation*}
The other side of the commutation equation satisfies
\begin{equation*}
 [z^n]f(H_\mu(z))
 =\lambda b_n+\sum_{j=1}^{n-1}b_jb_{n-j}.
\end{equation*}
Subtracting yields, for every $n\ge2$,
\begin{equation*}
 (\lambda^n-\lambda)b_n
 =\sum_{j=1}^{n-1}b_jb_{n-j}
  -\sum_{j=\lceil n/2\rceil}^{n-1}
     \binom{j}{n-j}\lambda^{2j-n}b_j.
\end{equation*}
In particular,
\begin{align*}
 b_2&=\frac{\mu^2-\mu}{\lambda^2-\lambda},\\
 b_3&=\frac{2(\mu-\lambda)b_2}{\lambda^3-\lambda},\nonumber\\
 b_4&=\frac{(2\mu-3\lambda^2)b_3+b_2^2-b_2}
              {\lambda^4-\lambda}.\nonumber
\end{align*}
All denominators are nonzero. The recursion determines formal
coefficients for every $\mu\ne0$, whereas the convergence
classification \eqref{eq:exact-convergence-set} concerns the complete
infinite series rather than any finite truncation.

\subsubsection{Centralisers of nonzero iterates}

\begin{lemma}[Invariance under nonzero iteration]\label{lem:iterate-centre}
Let $F\in\Diff$ have a multiplier that is not a root of unity.
For every $N\in\Z\setminus\{0\}$,
\begin{equation*}
 \Cent(\iter FN)=\Cent(F).
\end{equation*}
\end{lemma}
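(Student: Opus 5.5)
The inclusion $\Cent(F)\subset\Cent(\iter FN)$ is immediate. If $h\circ F=F\circ h$ as germs, then induction gives $h\circ\iter Fn=\iter Fn\circ h$ for $n\ge0$. Conjugating by $\iter Fn$ covers negative $n$: from $h\circ\iter Fn=\iter Fn\circ h$ we get $\iter F{-n}\circ h=h\circ\iter F{-n}$. All compositions are of invertible germs fixing $0$, so they are well defined on small discs.

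The reverse inclusion is the real content, and I would argue formally using Lemma~\ref{lem:formal}. Let $a=F'(0)$, which is not a root of unity, so $a^n\ne1$ for all $n\ge1$. Put $F_N=\iter FN$. Its multiplier is $a^N$, and $(a^N)^n=a^{Nn}\ne1$ for every $n\ge1$ because $Nn\ne0$. So Lemma~\ref{lem:formal} applies to both $F$ and $F_N$.

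Let $L$ be the normalized formal linearizer of $F$, so $L\circ F=aL$. Then $L\circ F_N=a^NL$, so $L$ is also a normalized formal linearizer of $F_N$. By the uniqueness in Lemma~\ref{lem:formal}, it is the linearizer of $F_N$. For negative $N$ one first checks that $L\circ F^{-1}=a^{-1}L$, which follows by composing $L\circ F=aL$ with $F^{-1}$ on the right.

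Now take $h\in\Cent(F_N)$. Its Taylor series lies in $\Centfor(F_N)$. Lemma~\ref{lem:formal} then gives $h=L^{-1}\circ R_\mu\circ L$ formally, with $\mu=h'(0)$. That formula also defines the unique element of $\Centfor(F)$ with multiplier $\mu$. Hence $h\circ F=F\circ h$ holds as an identity of formal power series. Both sides are convergent germs, since $h$ and $F$ are analytic, and a convergent germ is determined by its Taylor series. So the identity holds as germs, and $h\in\Cent(F)$.

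The only delicate point is making sure that the nonresonance hypothesis of Lemma~\ref{lem:formal} holds for $F_N$, including negative $N$. This is exactly where the assumption that $a$ is not a root of unity is used. Everything else is bookkeeping between formal series and convergent germs.
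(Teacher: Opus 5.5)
Your proof is correct and follows essentially the same route as the paper. Both arguments use the formal linearizer $L$ of $F$, which also conjugates $\iter FN$ to $R_{a^N}$ (including for negative $N$). The nonresonance of $a^N$ then forces $L\circ h\circ L^{-1}$ to be a dilation, so $h$ commutes with $F$ formally and therefore as germs. The only difference is that you invoke the uniqueness part of Lemma~\ref{lem:formal} for $\iter FN$, whereas the paper repeats the coefficient computation $(a^{Nn}-a^N)[z^n]S=0$ directly.
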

\begin{proof}
If $H$ commutes with $F$, then it commutes with $F^{-1}$, since
multiplying $HF=FH$ by inverse germs gives $HF^{-1}=F^{-1}H$, and
induction then gives commutation with all nonzero iterates. This
proves the inclusion from right to left.

Conversely, let $H$ commute with $\iter FN$ and use the formal
linearizing series $L$ of $F$ from Lemma~\ref{lem:formal}. If
$a=F'(0)$, then
\begin{equation*}
 L\circ\iter FN\circ L^{-1}=R_{a^N}
\end{equation*}
also for negative $N$. Set $S=L\circ H\circ L^{-1}$. Its
commutation equation gives
\begin{equation*}
 S(a^Nz)=a^NS(z),\qquad
 (a^{Nn}-a^N)[z^n]S=0\quad(n\ge2).
\end{equation*}
Because $a$ is not a root of unity and $N\ne0$, all factors on
the left are nonzero. Therefore $S$ is a dilation and commutes
with $R_a$. Conjugating back by $L$ gives $HF=FH$ formally.
Both sides are convergent germs, so equality of their coefficients
is equality of analytic germs. This proves the other inclusion.
\end{proof}

\begin{corollary}\label{cor:iterate-centre}
If $f=Q_\alpha$ and $\alpha$ is non-Brjuno irrational, then
\begin{equation*}
 \Cent(\iter fN)=\cyc f\qquad(N\in\Z\setminus\{0\}).
\end{equation*}
For $|N|>1$, this group strictly contains $\cyc{\iter fN}$.
\end{corollary}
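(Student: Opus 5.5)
The plan is to combine Lemma~\ref{lem:iterate-centre} with Corollary~\ref{cor:dichotomy}, and then separate the two cyclic groups by their multipliers.

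\textbf{Step 1 (the equality).} The multiplier of $f$ at $0$ is $\lambda=e^{2\pi i\alpha}$ with $\alpha$ irrational, so it is not a root of unity. Lemma~\ref{lem:iterate-centre} therefore applies with $F=f$, and gives $\Cent(\iter fN)=\Cent(f)$ for every $N\ne0$. Since $\alpha\notin\Bcal$, the second line of \eqref{eq:dichotomy} in Corollary~\ref{cor:dichotomy} identifies $\Cent(f)$ with $\cyc f$. Equivalently, by \eqref{eq:BY} the map $f$ is nonlinearizable, and Theorem~\ref{thm:main} applies. This proves the displayed equality.

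\textbf{Step 2 (strict containment for $|N|>1$).} The inclusion $\cyc{\iter fN}\subset\cyc f$ is immediate from \eqref{eq:integer-group}, since $(\iter fN)^{\circ m}=\iter f{Nm}$. For strictness I would show that $f=\iter f1$ does not lie in $\cyc{\iter fN}$. Suppose instead that $f=\iter f{Nm}$ for some $m\in\Z$. The uniqueness clause of Theorem~\ref{thm:main}, or directly the comparison of derivatives $\lambda=\lambda^{Nm}$, forces $Nm=1$. This is impossible when $|N|>1$.

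\textbf{Main obstacle.} None of these steps is substantive. The only point needing care is that the derivative comparison $\lambda^{Nm-1}=1\Rightarrow Nm=1$ uses the irrationality of $\alpha$. All the real content is contained in Lemma~\ref{lem:iterate-centre} and Theorem~\ref{thm:main}.
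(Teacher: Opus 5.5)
Your proposal is correct and follows essentially the same route as the paper: you combine Lemma~\ref{lem:iterate-centre} with Corollary~\ref{cor:dichotomy} to get the equality. For strictness the paper passes to $N\Z\subset\Z$ under $\iter fk\mapsto k$, and your derivative check that $f\notin\cyc{\iter fN}$ is the same argument written out directly.
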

\begin{proof}
Combine Lemma~\ref{lem:iterate-centre} with
Corollary~\ref{cor:dichotomy}. The subgroup generated by
$\iter fN$ corresponds, under $\iter fk\mapsto k$, to
$N\Z\subset\Z$. For $|N|>1$ this is a proper subgroup.
\end{proof}

\begin{corollary}[Analytic roots of iterates]\label{cor:roots}
Let $f=Q_\alpha$ with $\alpha$ non-Brjuno irrational.
For integers $m\ge1$ and $N\ne0$, the equation
\begin{equation}\label{eq:root-equation}
 \iter hm=\iter fN,\qquad h\in\Diff,
\end{equation}
has a solution if and only if $m\mid N$. When it has a solution,
that solution is unique and equals $\iter f{N/m}$.
In particular, $f$ has no analytic iterative root of order $m\ge2$.
\end{corollary}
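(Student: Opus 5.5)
The plan is to show that any solution $h$ of \eqref{eq:root-equation} lies in $\Cent(f)$, so that Corollary~\ref{cor:dichotomy} forces $h=\iter fk$. The divisibility condition and uniqueness then come from comparing multipliers.

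\textbf{Step 1: $h$ commutes with $\iter fN$.} Let $h\in\Diff$ satisfy $\iter hm=\iter fN$. Then $h$ commutes with $\iter hm$, since $h\circ\iter hm=\iter h{m+1}=\iter hm\circ h$. Hence $h\in\Cent(\iter fN)$. The multiplier of $f$ is $\lambda$, which is not a root of unity, and $N\ne0$. So Lemma~\ref{lem:iterate-centre} gives $\Cent(\iter fN)=\Cent(f)$, and Corollary~\ref{cor:iterate-centre} (or directly Corollary~\ref{cor:dichotomy} with $\alpha\notin\Bcal$) gives $h=\iter fk$ for some $k\in\Z$.

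\textbf{Step 2: the root equation becomes $mk=N$.} Substituting $h=\iter fk$ into \eqref{eq:root-equation} gives $\iter f{mk}=\iter fN$. The uniqueness clause of Theorem~\ref{thm:main} yields $mk=N$; equivalently, compare derivatives, $\lambda^{mk}=\lambda^N$, and use that $\lambda$ is not a root of unity. This has two consequences:
\begin{itemize}
\item if a solution exists, then $m\mid N$;
\item the solution is $k=N/m$, which is determined by $m$ and $N$, so the solution $h=\iter f{N/m}$ is unique.
\end{itemize}
Conversely, if $m\mid N$, then $h=\iter f{N/m}$ is an invertible analytic germ, and $\iter hm=\iter fN$ holds by the group law \eqref{eq:integer-group}, including the case of negative exponents.

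\textbf{Step 3: no roots of $f$ itself.} The final assertion is the case $N=1$: for $m\ge2$ we have $m\nmid1$, so there is no solution.

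None of these steps is a real obstacle. The only point needing care is Step 1, namely that a root of $\iter fN$ actually lies in $\Cent(f)$ rather than only in $\Cent(\iter fN)$. This is exactly what Lemma~\ref{lem:iterate-centre} supplies. Its hypothesis, that the multiplier is not a root of unity, holds because $\alpha$ is irrational, and its statement covers negative $N$ as well.
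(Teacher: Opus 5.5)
Your proof is correct and is essentially the paper's argument. You observe that a root $h$ commutes with $\iter hm=\iter fN$ and invoke Lemma~\ref{lem:iterate-centre} and Corollary~\ref{cor:iterate-centre} to get $h=\iter fk$. You then compare multipliers to obtain $mk=N$, take the converse from the group law, and get the final assertion from $N=1$.
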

\begin{proof}
An $h$ satisfying \eqref{eq:root-equation} commutes with its own
$m$th iterate, so
\begin{equation*}
 h\circ\iter fN=h\circ\iter hm
   =\iter hm\circ h=\iter fN\circ h.
\end{equation*}
Corollary~\ref{cor:iterate-centre} gives $h=\iter fk$ for a
unique integer $k$. Differentiating \eqref{eq:root-equation}
at $0$ now yields
\begin{equation*}
 \lambda^{mk}=\lambda^N
 \quad\Longrightarrow\quad \lambda^{mk-N}=1
 \quad\Longrightarrow\quad mk=N.
\end{equation*}
Thus $m\mid N$ and $k=N/m$. Conversely, if $m\mid N$, then
\begin{equation*}
 \bigl(\iter f{N/m}\bigr)^{\circ m}=\iter fN.
\end{equation*}
Taking $N=1$ proves the final assertion.
\end{proof}

\begin{remark}
The hypothesis $N\ne0$ in Corollary~\ref{cor:roots} is necessary
for its formulation without a commutation assumption: the equation
$h^{\circ m}=\id$ has nonidentity finite-order analytic solutions.
Such germs do not commute with the nonlinearizable quadratic unless
they are the identity.
\end{remark}

\subsubsection{Noninvertible commuting germs}

\begin{corollary}\label{cor:noninvertible}
Let $f=Q_\alpha$ with $\alpha$ non-Brjuno irrational. If a
holomorphic germ $h$ satisfies $h(0)=0$ and $h\circ f=f\circ h$,
then either $h\equiv0$ or $h=\iter fk$ for some $k\in\Z$.
\end{corollary}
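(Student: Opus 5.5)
We must show that a holomorphic germ $h$ with $h(0)=0$ and $hf=fh$ is either $\equiv0$ or an iterate of $f$. The plan is to reduce to the invertible case and then use Theorem~\ref{thm:main}. Write $h(z)=\sum_{n\ge1}c_nz^n$ with $h\not\equiv0$, and let $m=\ord_0h\ge1$, so $h(z)=c_mz^m+O(z^{m+1})$ with $c_m\ne0$.

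\textbf{Step 1: $m=1$.} Compare the lowest-order terms in $h\circ f=f\circ h$. On the left, $h(f(z))=c_m(\lambda z+z^2)^m+\dots=c_m\lambda^mz^m+O(z^{m+1})$. On the right, $f(h(z))=\lambda h(z)+h(z)^2=\lambda c_mz^m+O(z^{m+1})$, because $h^2$ has order $2m>m$. Hence $c_m\lambda^m=\lambda c_m$, so $\lambda^{m-1}=1$. Since $\lambda$ is not a root of unity, this forces $m=1$. Thus $h'(0)=c_1\ne0$ and $h\in\Diff$.

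\textbf{Step 2: conclusion.} Now $h\in\Cent(f)$. The hypothesis $\alpha\notin\Bcal$ gives nonlinearizability through \eqref{eq:BY}. Theorem~\ref{thm:main}, or equivalently the second line of Corollary~\ref{cor:dichotomy}, then yields $h=\iter fk$ for a unique $k\in\Z$.

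The only point needing care is that Step 1 uses $h(0)=0$ and $h\not\equiv0$, so that the order $m$ is finite and positive. This comparison of lowest-order coefficients is the entire obstacle, and it is routine.
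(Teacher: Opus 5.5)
Your proof is correct and is essentially the paper's argument: comparing lowest-order coefficients in $h\circ f=f\circ h$ gives $\lambda^{m-1}=1$, hence $m=1$ and $h\in\Diff$. The non-Brjuno case of Corollary~\ref{cor:dichotomy}, that is, Theorem~\ref{thm:main} combined with \eqref{eq:BY}, then yields $h=\iter fk$.
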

\begin{proof}
If $h\not\equiv0$, write
\begin{equation*}
 h(z)=bz^r+O(z^{r+1}),\qquad b\ne0,\quad r\ge1.
\end{equation*}
The leading terms in the two compositions are
\begin{equation*}
 h(f(z))=b\lambda^rz^r+O(z^{r+1}),\qquad
 f(h(z))=\lambda bz^r+O(z^{r+1}).
\end{equation*}
Thus $\lambda^{r-1}=1$, which forces $r=1$.
The inverse function theorem gives $h\in\Diff$, and
Corollary~\ref{cor:dichotomy} applies.
\end{proof}

\begin{remark}[Why the quadratic degree matters]\label{rem:quadratic-degree}
The argument uses the degree two in two separate places. First,
Proposition~\ref{prop:plane} gives only $\deg G\ge2$ before the
projections are controlled. The quadratic target then permits
\begin{equation*}
 \overline{\disc{cr^2}}\subset G(\disc r),\qquad
 H(cr^2)\le H(r)^2,
\end{equation*}
which forces polynomial growth. For a target of higher degree $d$, the corresponding right-hand
power would be $d$, so the lower bound $\deg G\ge2$ alone would no
longer give the same elementary growth conclusion. This limitation
concerns the displayed estimate rather than the scope of Eremenko's
more general theorem~\cite{Eremenko}.
Second, after polynomial realization, a quadratic polynomial is
already a minimal-degree nonlinear element of its polynomial
commutant. This is exactly \eqref{eq:minimal-degree-two}.

A composite-degree example also shows directly that the conclusion
cannot simply be stated for all nonlinearizable polynomial germs.
\end{remark}

\begin{example}[A nonlinearizable quartic with an extra commutant]\label{ex:quartic}
Choose $\beta\notin\Bcal$, put $\eta=e^{2\pi i\beta}$, and define
\begin{equation*}
 R(z)=\eta z+z^2,\qquad
 P(z)=R^{\circ2}(z)
 =\eta^2z+(\eta+\eta^2)z^2+2\eta z^3+z^4.
\end{equation*}
Then $P$ is nonlinearizable at $0$ and $R\in\Cent(P)$, but
$R\notin\cyc P$.
\end{example}
\begin{proof}
Commutation follows from associativity of iteration. If $P$ were
linearizable by $\psi$, then $S=\psi R\psi^{-1}$ would commute
with $R_{\eta^2}$. Since $\eta^2$ is not a root of unity,
the coefficient argument in
\eqref{eq:rotation-coefficient-equation} would give
\begin{equation*}
 S(z)=\eta z.
\end{equation*}
This would linearize $R$, contrary to \eqref{eq:BY} and
$\beta\notin\Bcal$. Thus $P$ is nonlinearizable.
If $R=P^{\circ k}$ as germs for an integer $k$, their derivatives
would satisfy
\begin{equation*}
 \eta=\eta^{2k}\quad\Longrightarrow\quad\eta^{2k-1}=1,
\end{equation*}
which is impossible for irrational $\beta$.
\end{proof}

\begin{remark}[Scope of the main theorem]\label{rem:scope-coda}
The result is restricted to the quadratic family and to local analytic
centralisers at the indicated fixed point. It does not assert cyclicity
for all nonlinearizable germs, nor does it settle optimal simultaneous
linearization conditions for arbitrary commuting families. The
formal-convergence and root classifications follow from this specific
quadratic rigidity statement.
\end{remark}


\begin{thebibliography}{99}

\bibitem{Ahlfors}
L.~V.~Ahlfors,
\emph{Complex Analysis}, 3rd ed., McGraw--Hill, New York, 1979.

\bibitem{ACE}
A.~Avila, D.~Cheraghi and A.~Eliad,
\emph{Analytic maps of parabolic and elliptic type with trivial centralisers},
\emph{Ann. Inst. H. Poincar\'e C Anal. Non Lin\'eaire} \textbf{39} (2022), no.~4,
885--903.
\href{https://doi.org/10.4171/AIHPC/22}{doi:10.4171/AIHPC/22}.

\bibitem{Biswas}
K.~Biswas,
\emph{Complete conjugacy invariants of nonlinearizable holomorphic dynamics},
\emph{Discrete Contin. Dyn. Syst.} \textbf{26} (2010), no.~3, 847--856.
\href{https://doi.org/10.3934/dcds.2010.26.847}{doi:10.3934/dcds.2010.26.847}.
Also available as \href{https://arxiv.org/abs/0903.2394}{arXiv:0903.2394}.

\bibitem{BC}
X.~Buff and A.~Ch\'eritat,
\emph{A new proof of a conjecture of Yoccoz},
\emph{Ann. Inst. Fourier (Grenoble)} \textbf{61} (2011), no.~1, 319--350.
\href{https://doi.org/10.5802/aif.2603}{doi:10.5802/aif.2603}.

\bibitem{Cheraghi}
D.~Cheraghi,
\emph{Topology of irrationally indifferent attractors},
\emph{Ann. Sci. \'Ec. Norm. Sup\'er. (4)} \textbf{58} (2025), no.~6,
1319--1381.
\href{https://doi.org/10.24033/asens.2630}{doi:10.24033/asens.2630}.
Also available as \href{https://arxiv.org/abs/1706.02678}{arXiv:1706.02678}.

\bibitem{Chirka}
E.~M.~Chirka,
\emph{Complex Analytic Sets},
Mathematics and its Applications (Soviet Series), vol.~46,
Kluwer Academic Publishers, Dordrecht, 1989.

\bibitem{Eremenko}
A.~E.~Eremenko,
\emph{On some functional equations connected with iteration of rational functions},
\emph{Algebra i Analiz} \textbf{1} (1989), no.~4, 102--116 (Russian);
English translation, \emph{Leningrad Math. J.} \textbf{1} (1990), no.~4, 905--919.
\href{https://www.math.purdue.edu/~eremenko/dvi/feq-e.pdf}{Author's copy of the English translation}.

\bibitem{Forster}
O.~Forster,
\emph{Lectures on Riemann Surfaces},
Graduate Texts in Mathematics, vol.~81, Springer-Verlag, New York, 1981.
\href{https://doi.org/10.1007/978-1-4612-5961-9}{doi:10.1007/978-1-4612-5961-9}.

\bibitem{Inou}
H.~Inou,
\emph{Extending local analytic conjugacies},
\emph{Trans. Amer. Math. Soc.} \textbf{363} (2011), no.~1, 331--343.
\href{https://doi.org/10.1090/S0002-9947-2010-05049-4}{doi:10.1090/S0002-9947-2010-05049-4}.

\bibitem{JX}
Z.~Ji and J.~Xie,
\emph{Local rigidity of Julia sets},
\emph{Amer. J. Math.}, to appear.
\href{https://arxiv.org/abs/2302.02562v2}{arXiv:2302.02562v2 (2025)}.
Theorem numbers cited in the text refer to this version.

\bibitem{MS}
A.~Medvedev and T.~Scanlon,
\emph{Invariant varieties for polynomial dynamical systems},
\emph{Ann. of Math. (2)} \textbf{179} (2014), no.~1, 81--177.
\href{https://doi.org/10.4007/annals.2014.179.1.2}{doi:10.4007/annals.2014.179.1.2}.

\bibitem{Milnor}
J.~Milnor,
\emph{Dynamics in One Complex Variable}, 3rd ed.,
Annals of Mathematics Studies, vol.~160,
Princeton University Press, Princeton, NJ, 2006.

\bibitem{Moser}
J.~Moser,
\emph{On commuting circle mappings and simultaneous Diophantine approximations},
\emph{Math. Z.} \textbf{205} (1990), no.~1, 105--121.
\href{https://doi.org/10.1007/BF02571227}{doi:10.1007/BF02571227}.

\bibitem{Pakovich}
F.~Pakovich,
\emph{Polynomial semiconjugacies, decompositions of iterations, and invariant curves},
\emph{Ann. Sc. Norm. Super. Pisa Cl. Sci. (5)} \textbf{17} (2017), no.~4,
1417--1446.
\href{https://pakovich.github.io/sp.pdf}{Author's copy}.

\bibitem{PM}
R.~P\'erez Marco,
\emph{Nonlinearizable holomorphic dynamics having an uncountable number of symmetries},
\emph{Invent. Math.} \textbf{119} (1995), 67--127.
\href{https://doi.org/10.1007/BF01245175}{doi:10.1007/BF01245175}.

\bibitem{Ritt}
J.~F.~Ritt,
\emph{Permutable rational functions},
\emph{Trans. Amer. Math. Soc.} \textbf{25} (1923), no.~3, 399--448.
\href{https://doi.org/10.1090/S0002-9947-1923-1501252-3}{doi:10.1090/S0002-9947-1923-1501252-3}.

\bibitem{Yoccoz}
J.-C.~Yoccoz,
\emph{Th\'eor\`eme de Siegel, nombres de Bruno et polyn\^omes quadratiques},
\emph{Ast\'erisque} \textbf{231} (1995), 3--88.

\end{thebibliography}
\end{document}